\documentclass[journal,onecolumn, 12pt]{IEEEtran}

\usepackage[margin=1in]{geometry}

\usepackage[utf8]{inputenc} % allow utf-8 input
\usepackage[T1]{fontenc}    % use 8-bit T1 fonts
\usepackage[colorlinks=true,
            linkcolor=blue,
            citecolor=red,
            urlcolor=blue,
            anchorcolor=red,
            bookmarks=true,
            plainpages=false]{hyperref}
            
\usepackage{url}            % simple URL typesetting
\usepackage{nicefrac}       % compact symbols for 1/2, etc.
\usepackage{microtype}      % microtypography
\usepackage{xcolor}         % colors
\usepackage{comment}

\usepackage[cmex10]{amsmath}
\usepackage{amssymb}
\usepackage{amsfonts}
\usepackage{amsbsy}
\usepackage{amsthm}
\usepackage{thmtools}
\usepackage{thm-restate}
\usepackage{mathtools}
\usepackage{bbm}
\usepackage[capitalize,nameinlink]{cleveref}
\usepackage{xfrac}

\usepackage{subfigure}
\usepackage{float}
\usepackage{dsfont}
\usepackage{graphicx}
\usepackage{booktabs}
\usepackage{color}
\usepackage{url}
\usepackage{cite}
\usepackage{tabularx}

\usepackage{pdfpages}
\usepackage{enumitem}

\usepackage[linesnumbered,ruled]{algorithm2e}

\newtheorem{theorem}{Theorem}[section]%[section]
\newtheorem{corollary}{Corollary}[section]
\newtheorem{lemma}{Lemma}[section]
\newtheorem{proposition}[theorem]{Proposition}

\newtheorem{remark}{Remark}[section]

\newcounter{thmcase}[theorem]

\crefname{thmcase}{theorem}{theorems}
\Crefname{thmcase}{Theorem}{Theorems}

\newcommand{\thmcaseLabel}[1]{%
   \refstepcounter{thmcase}% 
   \label{#1}% 
}

\usepackage{parskip}
\newcommand{\eps}{\varepsilon}
\newcommand{\dd}{\mathrm{d}}
\newcommand{\ci}{\perp\!\!\!\perp}

\renewcommand{\P}{\mathsf{P}}
\newcommand{\Q}{\mathsf{Q}}
\newcommand{\Expect}{\mathbb{E}}

\newcommand{\TV}{\mathrm{TV}}
\newcommand{\ind}[1]{\mathbf{1}_{\{#1\}}}

\DeclareMathOperator{\Bern}{Bern}

\DeclareMathOperator{\Bin}{Bin}
\DeclareMathOperator{\ER}{ER}
\DeclareMathOperator{\Var}{Var}
\DeclareMathOperator{\Cov}{Cov}

\DeclareMathOperator{\Id}{Id}

\DeclareMathOperator{\ov}{ov}

\newcommand{\calE}{\mathcal{E}}
\newcommand{\En}{\mathcal{E}_n}

\newcommand{\Sn}{\mathfrak{S}_n}

\usepackage[size=tiny]{todonotes}

\newcommand{\Indc}{\mathbf{1}}

\def\ci{\perp\!\!\!\perp}

\renewcommand{\Pr}[1]{\mathsf{P}\left(#1 \right)} % probability
\newcommand{\Qr}[1]{\mathsf{Q}\left(#1 \right)}

\newcommand{\erdosrenyi}{Erd\H{o}s-R\'{e}nyi}

\newcommand{\pistar}{\pi^{\star}}

\renewcommand{\hat}{\widehat}

\newcommand{\G}{\mathsf{G}}
\newcommand{\E}{\mathsf{E}}

\usepackage{etoolbox} 



\begin{document}
\title{Converse bounds for multiple graph alignment and correlation detection based on last matching} 

\author{
    Taha Ameen %\orcidlink{0000-0001-5449-0840},~\IEEEmembership{Student Member,~IEEE,} 
    and Bruce Hajek %\orcidlink{0000-0002-8520-0196},~\IEEEmembership{Life Fellow,~IEEE}  
        \thanks{
            The authors are with the Department of Electrical and Computer Engineering and the Coordinated Science Laboratory, University of Illinois, Urbana, IL 61801, USA. E-mails: \{ \texttt{tahaa3, b-hajek} \} \texttt{@illinois.edu}. A portion of this paper, for alignment in Gaussian model, appeared in the proceedings of IEEE ISIT 2025 \cite{ameen2025detecting}.
        }
    }

\date{}

\maketitle

\begin{abstract}
The paper focuses on information theoretic converse bounds for the alignment of $m$ correlated graphs and for the detection of correlation among $m$ graphs.   A simple idea for $m\geq 3$ is that if the alignment of $m-1$ of the graphs is revealed as extra information (by a genie for example) then it is still necessary to produce the alignment between the one remaining graph and the others, i.e. the last matching must be accomplished. For both Gaussian and \erdosrenyi\ models the last-matching problem is equivalent to one with two observed graphs, providing a path to extend converse bounds for $m=2$ to larger $m.$   While the method is rather obvious for alignment, we show that the method can also be used to derive converse bounds for weak detection of correlation.
\end{abstract}

%\begin{IEEEkeywords}
%    Hypothesis testing, circular statistics, von Mises distribution, planted models, community detection
%\end{IEEEkeywords}

\tableofcontents

\section{Introduction}
\label{sec:introduction}

Networks describing the same collection of objects are increasingly observed across different platforms, modalities, and time periods.  Such networks are often statistically related, but their vertex labels need not agree.  For example, users may be anonymized differently across social networks, proteins may be indexed differently across species, and brain connectomes obtained from different subjects have no natural common labeling.  Recovering the latent correspondence can enable linkage across social networks~\cite{narayanan2008robust,narayanan2009deanonymizing}, identify conserved biological structure~\cite{singh2008global}, and facilitate comparisons between brain networks~\cite{sporns2005human,calissano2024graph}.

Two fundamental statistical problems arise in this setting.  In \emph{graph alignment}, the graphs are assumed to be correlated and the objective is to recover their latent vertex correspondence.  In \emph{correlation detection}, the more basic objective is to determine whether the observed graphs are correlated through some unknown alignment or are instead mutually independent.  Both problems become particularly interesting when more than two graphs are observed.  The additional graphs provide more evidence about the common latent structure, and can make alignment or detection possible even when every individual pair of graphs contains too little information.

Determining the benefit of these additional observations requires both achievability results and converse bounds.  Direct converse arguments for $m$ graphs can be substantially more difficult than their two-graph counterparts: the latent alignment now consists of $m-1$ permutations, and the likelihood ratio or posterior distribution involves their joint overlap structure.  At the same time, a considerable literature has already developed powerful converse techniques for two correlated graphs.  This raises a natural question: \emph{Can existing two-graph converses be converted into converse bounds for an arbitrary fixed number of graphs?}

This paper develops a simple method for doing so.  Imagine that a genie reveals the information needed to align graphs $2,\ldots,m$ with one another.  Even with this additional information, graph $1$ must still be matched to the aligned collection of the remaining graphs.  Moreover, in the Gaussian and \erdosrenyi\ models, the aligned collection can be compressed without loss of relevant information into a single aggregate graph: a sum in the Gaussian model and an edgewise union in the \erdosrenyi\ model.  The remaining task is therefore a two-graph problem between graph $1$ and the aggregate graph.  We call this the \emph{last-matching principle}.

For alignment, the validity of this principle is immediate at an intuitive level: providing the internal alignment of $m-1$ graphs can only make estimation easier, so impossibility after this revelation implies impossibility without it.  Its use for correlation detection is less direct.  Under the null hypothesis there is no true alignment for the genie to reveal, and the information supplied by the genie must not itself disclose which hypothesis is true.  We resolve this issue by constructing a plausible genie output under the null and proving a recursive total-variation bound.  This allows two-graph detection converses to be lifted inductively to any fixed number of graphs.

In the \erdosrenyi\ model, the resulting two-graph problem is asymmetric (see \Cref{sec:problem-formulation} for a precise definition). However, most existing two-graph converses are stated for the symmetric case in which the two retention probabilities coincide.  A second purpose of this paper is therefore to establish the asymmetric versions of the weak-detection and alignment converses needed by the last-matching reduction.

\subsection{Related work}
\label{sec:related_work}

\paragraph{Two correlated graphs.}
The subsampling model for correlated \erdosrenyi\ graphs has become a standard framework for the theoretical study of graph alignment~\cite{pedarsani2011privacy}.  Exact-alignment thresholds were studied by Cullina and Kiyavash~\cite{cullina2016improved,cullina2017exact}, while almost-exact and partial alignment were subsequently investigated in
\cite{cullina2019kcore,wu2022settling,ding2023densesubgraph,hall2023partial,du2025optimal}.
For Gaussian graphs, sharp alignment thresholds were obtained independently by Ganassali~\cite{ganassali2022sharp} and by Wu, Xu, and Yu~\cite{wu2022settling}.

The hypothesis-testing problem for two unlabeled graphs was studied by Wu, Xu, and Yu~\cite{wu2023testing}, who obtained sharp thresholds for Gaussian graphs and sufficiently dense \erdosrenyi\ graphs, together with converse bounds in the sparse setting.  The sharp threshold in the moderately sparse \erdosrenyi\ regime was subsequently determined by Ding and Du~\cite{ding2023detection}, while the constant-average-degree regime was studied by Feng~\cite{feng2025strong}.  These works supply the two-graph converses on which the present reduction builds.

\paragraph{Multiple correlated graphs.}
The use of several graphs for alignment has been investigated for \erdosrenyi\ graphs and more general random-graph models in
\cite{josephs2021recovery,ameen2024exact,ameen2024aligning}, and for multiple correlated stochastic block models in~\cite{racz2024harnessing}.  In the Gaussian setting, Vassaux and Massouli\'e~\cite{vassaux2025} determined the alignment threshold for a fixed number of graphs.  Even and Ganassali~\cite{even2025statistical} considered the regime in which the number of graphs $m$ can grow, possibly quickly, with $n$, without being concerned with constant factors for the thresholds.  The last-matching approach was used to establish a converse bound for weak detection in the Gaussian setting in~\cite{ameen2025detecting}. Sharp Gaussian detection thresholds were obtained in~\cite{ameenhajekGaussian}, and sparse multi-graph \erdosrenyi\ detection was recently studied in~\cite{ochoa2026detection}. 

These works primarily analyze the multi-graph problem directly.  The present paper takes a complementary approach: it asks what can be obtained from existing two-graph converses through a general last-matching reduction.  The resulting bounds need not always be sharp, since revealing the alignment of $m-1$ graphs may give substantial additional information.  Nevertheless, the method is modular, applies simultaneously to alignment and detection, and identifies the last matching as a necessary obstruction to solving the full problem.

\subsection{Contributions}
\label{sec:contributions}

The main contributions of the paper are as follows.

\begin{enumerate}[leftmargin=*]

\item \textbf{A last-matching converse for alignment.}
We show that after a genie reveals the alignment among graphs $2,\cdots,m$, their relevant information can be aggregated into a single graph $Y$.  Consequently, any impossibility result for this reduced two-graph problem between the first graph and $Y$ yields an impossibility result for the original $m$-graph problem.  The reduction applies to exact, almost-exact, and partial alignment.

\item \textbf{A last-matching converse for detection.}
We develop an analogous reduction for weak detection.  The principal difficulty is defining the genie's output under the null hypothesis, where no planted alignment exists.  We construct a suitable null output and prove that the total variation distance for the $m$-graph problem is bounded by the total variation distance of the last two-graph problem, plus the corresponding distance for the remaining $m-1$ graphs.  Iterating this inequality transfers two-graph weak-detection converses to the multi-graph setting.

\item \textbf{Asymmetric two-graph \erdosrenyi\ converses.}
We establish the asymmetric versions of existing weak-detection and partial-alignment converses, with subsampling probabilities $s_1$ and $s_2$ replacing the common retention probability $s$.  The proofs adapt the conditional second moment arguments of
\cite{wu2023testing,feng2025strong}, the densest-subgraph arguments of
\cite{ding2023detection,ding2023densesubgraph}, and the mutual-information MMSE area method of~\cite{wu2022settling}.  We also record existing asymmetric converses for almost-exact and exact alignment.

\item \textbf{Applications to Gaussian and \erdosrenyi\ graphs.} We apply the last-matching converse to obtain converse bounds for exact alignment, almost-exact alignment, partial alignment and weak detection, respectively in 
the \erdosrenyi\ model and the Gaussian model.
\end{enumerate}

The last-matching converse bound was proved to be tight for exact matching in the particular sparse scaling regime $p=(c\log n)/n$ \cite{ameen2024exact,racz2024harnessing}.
However, comparison of the last-matching converse bounds with the sharp thresholds in the Gaussian case in~\cite{vassaux2025,ameenhajekGaussian} indicates the last-matching converse bound is not always the best possible: the last-matching converse is a broadly applicable black-box reduction, but can be strictly weaker than a direct analysis of the full multi-graph problem.

\paragraph{Organization.}
\Cref{sec:problem-formulation} introduces the Gaussian and \erdosrenyi\ models and defines the alignment and detection objectives.  It also includes a compact listing of our multi-graph converse bounds.  \Cref{sec:genie-recovery} develops the last-matching reduction for alignment and applies it to the Gaussian model, while \Cref{sec:genie-detection} establishes the corresponding last-matching reduction for weak detection.  Because the \erdosrenyi\ reduction produces a two-graph problem with unequal retention probabilities, \Cref{sec:converses_asymmetric} states the required asymmetric two-graph converses for weak detection and for exact, almost-exact, and partial alignment. \Cref{sec:ER_corollaries} combines these converses with the last-matching reductions to obtain the final converse bounds for multiple correlated \erdosrenyi\ graphs.  The proofs of the asymmetric detection and partial-alignment converses are given in Appendices~\ref{app:detection_converses_asymmetric} and~\ref{app:converses_asym_recovery} respectively, and \Cref{sec-discussion} concludes with a discussion of the scope and limitations of the last-matching method and it also includes a new positive result for detection of correlation for multiple \erdosrenyi\ graphs in the dense regime.

\section{Problem Formulation} \label{sec:problem-formulation}

Fix an integer $m\ge 2$ independent of $n$, and for any finite set $A$, denote by $\binom{A}{2}\coloneqq\{B\subset A: |B|=2\}$ the set of unordered pairs from $A$.  Let 
\[
        \En = \binom{[n]}{2},
        ~~~~~~
        N = |\En| = \binom{n}{2} .
\]
We observe $X$ consisting of $m$ random symmetric $n\times n$ matrices $X=(X^1, X^2, \cdots, X^m)$ with zero diagonals corresponding to (possibly weighted) graphs on the common vertex set $[n]$, where $X^k = (X^k_{ij})_{i,j\in [n]}.$
We are interested in both alignment (determining the latent correspondence) and detection (determining whether correlation is present)
from observation of $X$.

Let $\Sn$ denote the set of permutations of $[n].$  A permutation $\tau\in\Sn$ induces a permutation on $\En$, also denoted by $\tau$, such that for $e=\{i,j\} \in \En$ with $i\neq j$, $\tau(e) = \{\tau(i),\tau(j)\}.$
An {\em alignment} for $m$ matrices is an $m$-tuple of permutations
\[
        \pi=(\pi_1,\cdots,\pi_m),
        ~~~~ \mbox{ where }
        \pi_1=\Id.
\]
Given an alignment $\pi$,  related permutations $(\pi_{k\ell})_{k\ell\in [m]}$ are defined by 
$\pi_{k \ell} = \pi_{\ell}\circ (\pi_{k})^{-1}$ for $k,\ell \in [m].$
Note that, since $\pi_1 = \Id,$ $\pi_{1\ell} = \pi_{\ell}$ for $\ell\in [m].$

Suppose that under a probability distribution $\P$ there exists a uniformly distributed random alignment $\pistar$ such that, conditioned on $\pistar$,
\begin{align}
    \big( X^1_e, X^2_{\pistar_{2}(e)},\cdots, X^m_{\pistar_{m}(e)} \big)_{e\in \En}
\label{eq:aligned_vectors}
\end{align}
are $N$ mutually independent $m$-vectors.   Consider the following two cases for the distribution of the $m$-vectors.

{\bf Gaussian case:} Let $\rho \in [0,1].$ Under $\P$ the $m$-vectors consist of $m$ jointly Gaussian variables with mean zero, variance one and pairwise covariance $\rho.$

{\bf \erdosrenyi\ case:} Let $s,p\in [0,1].$ 
For each vertex pair $e\in\En$,
\[
        X^k_{\pistar_k(e)}
        =
        X^0_e \, \xi^{k}_e,
        ~~~~~~
        X^0_e\sim \Bern(p),
        ~~~~~~
        \xi^{k}_e\sim \Bern(s),
\]
with all $(X^0_e)$ and $(\xi^{k}_e)$ mutually independent.  Note that $X^0$ is the incidence matrix for an \erdosrenyi$(n,p)$ graph and for each $k$, $X^k$ is an \erdosrenyi$(n,ps)$ graph obtained by subsampling the parent graph $X^0$ with retention probability $s.$

\paragraph{Alignment}

Following \cite{vassaux2025}, let the overlap between
two alignments $\pi$ and $\pi'$, denoted $\ov(\pi,\pi')$, be the fraction of vertices that the two alignments agree upon over all $m$ graphs:
\begin{equation} \label{strongov}
   \ov(\pi, \pi') = \frac{1}{n} \sum_{i\in [n]} \ind{\pi_2(i) = \pi'_2(i), \cdots, \pi_m(i) = \pi'_m(i)}.
\end{equation}
Let $\hat{\pi}$ be an estimator of $\pistar$ based on observation of $X.$
The estimator achieves
 \begin{itemize}
\item \textit{Exact alignment} if the probability of any errors converges to $0$:
    \[ 
     \Pr{\ov(\pistar,\hat{\pi}) = 1}  = 1 - o(1).
    \]
\item \textit{Almost-exact alignment} if the fraction of correctly matched vertices converges to one: 
    \[ 
        \Pr{\ov(\pistar,\hat{\pi}) \geq 1 -\epsilon}  = 1 - o(1)~~~\mbox{for all } \epsilon > 0.
    \]
\item \textit{Partial alignment} if the fraction of correctly matched vertices exceeds some positive constant with probability converging to one: 
    \[ 
     \Pr{\ov(\pistar,\hat{\pi}) \geq c}  = 1 - o(1)~~~\mbox{for some } c > 0.
    \] 
\end{itemize}
Given the alignment problem,
we say exact alignment is impossible if there does not exist an estimator achieving exact alignment,  almost-exact alignment is impossible if there does not exist an estimator achieving almost-exact alignment,  and partial alignment is impossible if there does not exist an estimator achieving partial alignment.
The following is a stronger negative property than impossibility of partial alignment.
 \begin{itemize}
\item \textit{Intractability of partial alignment.} Partial alignment is said to be intractable if for every estimator $\hat{\pi},$
    \[ 
     \Pr{\ov(\pistar,\hat{\pi}) \leq \epsilon}  = 1 - o(1)~~~\mbox{for all } \epsilon > 0.
    \]
\end{itemize}

\paragraph{Detection}

We consider binary hypothesis testing problems such that under hypothesis $H_1$ the observation $X$ has distribution $\P$ as described above.   Under the null hypothesis the observation $X$ has distribution $\Q$ given by
$\Q_X = \P_{X^1} \otimes \P_{X^2} \otimes \cdots \otimes \P_{X^m}.$  In other words, under $\Q$ the matrices $X^1, \cdots , X^m$ are mutually independent with the same marginal distributions as under $\P.$
A test statistic $T(X)$ with threshold $\tau$ achieves 
\begin{itemize}
    \item \textit{Strong detection} if the total error converges to $0$:
    \[ 
        \Pr{T(X) < \tau} + \Qr{T(X) \geq \tau} = o(1).
    \]
    \item \textit{Weak detection} if the test outperforms random guessing:
    \[ 
        \Pr{T(X) < \tau} + \Qr{T(X) \geq \tau} = 1 -\Omega(1).
    \]
\end{itemize}
Given the hypothesis testing problem, we say strong detection is impossible if there does not exist a decision rule achieving strong detection and weak detection is impossible if there does not exist a decision rule achieving weak detection.

Let $\TV(\P,\Q)$ denote the total variation distance between the two measures $\P$ and $\Q$.   (More precisely, it denotes $\TV(\P_X,\Q)$, the $\TV$ distance between the marginal distribution of $X$ under $\P$ and $\Q.$)   Impossibility of weak detection is equivalent to $\TV(\P,\Q) = o(1).$

\subsection{Summary of converse bounds from last-matching method for multiple \erdosrenyi\ graphs}

\begin{table*}[t]
\centering
\caption{
Converse bounds for weak detection and partial alignment from \Cref{cor:asym_er_weak_detection_converses,cor:asym_dd_recovery}.
%Overview of converse bounds obtained from the last-matching reduction.  
Here \(\epsilon>0\) is an arbitrarily small fixed constant and \(\lambda_\alpha=\varrho^{-1}(1/\alpha)\), where \(\varrho\) is defined in~\eqref{eq:def_varrho}.  
\medskip
}
\label{tab:main_converse_results}

\small
\renewcommand{\arraystretch}{1.45}

\begin{tabularx}{\textwidth}{
    @{}
    >{\raggedright\arraybackslash}p{0.26 \textwidth}
    >{\arraybackslash}X
    >{\arraybackslash}X
    %     >{\centering\arraybackslash}X
    %     >{\centering\arraybackslash}X
    @{}
}
\toprule
\textbf{Model and regime}
&
\textbf{Weak detection is impossible}
&
\textbf{Partial alignment is intractable}
\\
\midrule

%Gaussian
%&
%\(\displaystyle
%    \rho^2
%    \leq
%    \frac{4-\epsilon}{m-1}\frac{\log n}{n}
%\)
%&
%\(\displaystyle
%    \rho^2
%    \leq
%    \frac{4-\epsilon}{m-1}\frac{\log n}{n}
%\)
%\\
%
%\addlinespace

\erdosrenyi, sparse
\newline
{\footnotesize (includes $p=n^{-1+o(1)}$) }   
&
\(\displaystyle
    nps^2
    \leq
    \frac{1-\omega(n^{-1/3})}{m-1} {\footnotesize \mbox{ (and } s\to 0)}
\)
&
\(\displaystyle
    nps^2
    \leq
    \frac{1-\epsilon}{m-1}{\footnotesize \mbox{ (and } np = \omega(\log^2 n))}
\)
\\

\addlinespace

\erdosrenyi, moderately sparse
\newline
{\footnotesize \(p=n^{-\alpha+o(1)}\), \(0<\alpha<1\)}
&
\(\displaystyle
    nps^2
    \leq
    \frac{\lambda_\alpha-\epsilon}{m-1}
\)
&
\(\displaystyle
    nps^2
    \leq
    \frac{\lambda_\alpha-\epsilon}{m-1} 
\)
\\

\addlinespace

\erdosrenyi, dense
\newline
{\footnotesize \(p=n^{-o(1)}\) (and $p\leq 1-\Omega(1)$)}
&
\(\displaystyle
\begin{gathered}
    nps^2\leq
     \frac{(2-\epsilon)\log n}{(m-1)(\log\frac 1 p -1 + p)}
\end{gathered}
\)
&
\(\displaystyle
\begin{gathered}
        nps^2\leq
     \frac{(2-\epsilon)\log n}{(m-1)(\log\frac 1 p -1 + p)} \\
\end{gathered}
\)
\\
\bottomrule
\end{tabularx}
\end{table*}

The converse bounds from \Cref{cor:asym_er_weak_detection_converses,cor:asym_dd_recovery} are listed in
\Cref{tab:main_converse_results}.
Additionally, \Cref{cor:almost_exact_recovery} shows that almost-exact alignment is impossible if
\(
        nps^2=O(1)
\)
and \Cref{cor:asym_exact_recovery} shows that exact alignment is impossible if $ps=o(1)$ and
\(
        nps^2(1-\sqrt p)^2
        \leq
        \frac{(1-\epsilon)\log n}{m-1}.
\)

\section{Last-matching converse method for alignment} \label{sec:genie-recovery}

In the following we use the notation $X^{2:m} = (X^2,\cdots ,X^m).$
Let $\pistar_{2:m} = (\pistar_{k\ell}:2\leq k,\ell \leq m),$ which determines how $X^2, \cdots , X^m$ are aligned under $\pistar.$ 
Consider the performance of estimators of $\pistar$ that are based on observation of $(X,\pistar_{2:m}).$   Since such estimators could ignore the additional information $\pistar_{2:m}$, if any  type of alignment is impossible with this additional information it is also impossible for the original alignment problem.

Define the overlap for two permutations $\tau,\tau'\in \Sn$ by $\ov(\tau,\tau')=\frac 1 n |\{i\in [n]: \tau(i)=\tau'(i)\}|.$  In particular, if $\pistar$ is an alignment of $X=(X_1, \ldots , X_m)$ and $\hat{\pi}$ is an estimator of $\pistar,$  then $\ov(\pistar_2,\hat{\pi}_2)$ is the fraction of vertices correctly aligned between $X_1$ and $X_2.$   Note that $\ov(\pistar,\hat{\pi}) \leq\ov(\pistar_2,\hat{\pi}_2)$
with equality if and only if for all $i\in [n]$,
\begin{align}
    \widehat{\pi}_2(i) = \pistar_2(i) ~~~  \iff   ~~~ \widehat{\pi}_k(i) = \pistar_k(i) \mbox{ for } 2\leq k \leq m.   \label{eq:all_or_nothing}
\end{align}
Given any estimator $\hat{\pi}$ based on observation of $(X,\pistar_{2:m}),$  the new estimator $\hat{\pi}'$ defined by $\hat{\pi}'_k = \pistar_{2k}\circ \hat{\pi}_2$ for $2\leq k\leq m$ satisfies  \eqref{eq:all_or_nothing} and also $\hat{\pi}'_2=\hat{\pi}_2.$  
Therefore, 
$\ov(\pistar,   \hat{\pi} ) \leq 
\ov(\pistar_2, \hat{\pi}_2) = 
\ov(\pistar_2, \hat{\pi}'_2)  =
\ov(\pistar, \hat{\pi}').$
Hence, for the alignment problem with observation $(X,\pistar_{2:m}),$  we can consider without loss of optimality estimators of the form $\hat{\pi}'$.   Therefore, we can change the objective from maximizing $\ov(\pistar,\hat{\pi})$ over all choices of alignment estimator $\hat{\pi}$ to maximizing $\ov(\pistar_2,\hat{\pi}_2)$ over all choices of a single permutation estimator $\hat{\pi}_2$ of $\pistar_2$ (still based on observation of $(X,\pistar_{[2:m]})$).

Define an $n\times n$ symmetric matrix $Y$ as a function of $(X^{2:m},\pistar_{2:m})$ as follows.   For the Gaussian case,  $Y_e = X^2_e + X^3_{\pistar_{23}(e)} + \cdots  + X^m_{\pistar_{2m}(e)}$ which is the sum of $X^2$ through $X^m$ after alignment by $\pistar.$ 
Similarly, for the \erdosrenyi\ case, $Y_e = X^2_e\vee X^3_{\pistar_{23}(e)}\vee \cdots  \vee X^m_{\pistar_{2m}(e)}  $, which is the pointwise maximum of $X^2$ through $X^m$ after alignment by $\pistar.$ 

\medskip

\begin{lemma}  \label{lemma:basic_Markov}
$(X^1,\pi_2^*) - Y - (X^{2:m},\pistar_{2:m})$ forms a Markov chain under $\P.$
\end{lemma}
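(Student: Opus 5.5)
The plan is to show that the conditional distribution of $(X^1,\pistar_2)$ given $(X^{2:m},\pistar_{2:m})$ depends on $(X^{2:m},\pistar_{2:m})$ only through $Y$. Equivalently, we will factor the joint law as
\[
\P(x^1,\pi_2,x^{2:m},\pi_{2:m}) = f(x^1,\pi_2,y)\,g(x^{2:m},\pi_{2:m}),
\]
with $y$ the function of $(x^{2:m},\pi_{2:m})$ defining $Y$. Such a factorization gives the Markov chain immediately.

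First we reparametrize. The map $\pistar\mapsto(\pistar_2,\pistar_{2:m})$ is a bijection between alignments and pairs (permutation, consistent tuple), since $\pistar_k=\pistar_{2k}\circ\pistar_2$. Because $\pistar$ is uniform, $\pistar_2$ is uniform and independent of $\pistar_{2:m}$, which is itself uniform.

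Next we condition on $\pistar$ and use the independence of the aligned vectors \eqref{eq:aligned_vectors}. Reindexing $f=\pistar_2(e)$ gives $X^k_{\pistar_k(e)}=X^k_{\pistar_{2k}(f)}$. Hence the vectors
\[
\bigl(X^1_{\pistar_2^{-1}(f)},\,X^2_f,\,X^3_{\pistar_{23}(f)},\dots,X^m_{\pistar_{2m}(f)}\bigr)_{f\in\En}
\]
are independent across $f$, each with the model's $m$-vector law. The joint density is therefore a product over $f$ of the per-edge density evaluated at $(x^1_{\pi_2^{-1}(f)},z_f)$, where $z_f=(x^2_f,x^3_{\pi_{23}(f)},\dots,x^m_{\pi_{2m}(f)})$. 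It remains to show, for each edge, that the law of $(X^1\text{-coordinate}, Z)$ factors as $h(x^1,y)\,k(z)$ with $y=\sum z$ (Gaussian case) or $y=\max z$ (\erdosrenyi\ case). Equivalently, the first coordinate must be conditionally independent of $Z$ given the aggregate.

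\textbf{Gaussian case.} The covariance of the $m$-vector is $(1-\rho)I+\rho\mathbf{1}\mathbf{1}^\top$. Its inverse has the form $aI+b\mathbf{1}\mathbf{1}^\top$, so the quadratic form is $a\sum_k v_k^2+b(\sum_k v_k)^2$. The cross terms involving $v_1$ are $2b\,v_1\sum_{k\ge2}v_k$, which depend on $z$ only through $y$. Thus the density factors as $\exp(-\tfrac12(a+b)v_1^2-b\,v_1y-\tfrac b2y^2)\cdot\exp(-\tfrac a2\sum_{k\ge2}v_k^2)$, up to constants. When $\rho=1$ the density is degenerate; there we argue directly that $X^1=X^2=\cdots$ on aligned entries, so $z$ determines $y$ and $x^1=y/(m-1)$.

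\textbf{\erdosrenyi\ case.} Conditioning on $X^0$: if $X^0=0$, all coordinates are zero. If $X^0=1$, the coordinates are i.i.d.\ $\Bern(s)$. If $Y=1$, then $X^0=1$, so $X^1\sim\Bern(s)$ independently of $Z$. If $Y=0$, then $Z=0$ deterministically, so there is nothing to condition on. In both cases $X^1\ci Z\mid Y$.

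Taking the product over $f$ yields the factorization $\tfrac{1}{|\Sn|}\prod_f h(x^1_{\pi_2^{-1}(f)},y_f)\cdot\tfrac{1}{|\Sn|^{m-2}}\prod_f k(z_f)$. The first factor involves only $(x^1,\pi_2,y)$ and the second only $(x^{2:m},\pi_{2:m})$, which proves the lemma. The main care needed is the reindexing step, making sure $Y_f$ is exactly the aggregate of the aligned vector indexed by $f$, together with the degenerate $\rho=1$ case. Neither step presents a real obstacle.
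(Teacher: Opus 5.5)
Your proof is correct and follows essentially the paper's route: you use the per-edge independence of the aligned vectors, reindexed through $\pistar_2$, to reduce to showing that $X^1_e$ is conditionally independent of $(X^2_{\pistar_2(e)},\ldots,X^m_{\pistar_m(e)})$ given their sum (Gaussian) or maximum (\erdosrenyi), and then multiply over edges. The differences are only in packaging. Your joint factorization $f(x^1,\pi_2,y)\,g(x^{2:m},\pi_{2:m})$ also yields the independence of $\pistar_2$ from $(X^{2:m},\pistar_{2:m})$, which the paper simply asserts, and your precision-matrix computation with $aI+b\mathbf{1}\mathbf{1}^\top$ (plus the separate $\rho=1$ remark) replaces the paper's direct conditional-mean formula.
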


\begin{proof}   It is sufficient to show that the conditional distribution of $(X^1,\pi_2^*)$
given $(X^{2:m},\pistar_{2:m})$ depends only on $Y.$   First, note that $\pistar_2$ is independent of $(X^{2:m},\pistar_{2:m})$ and uniformly distributed over $\Sn.$ 
Thus, to complete the proof of the lemma, it suffices to show that the conditional distribution of $X^1$ given $(X^{2:m},\pistar_{2:m},\pistar_2)$ depends only on $(Y,\pistar_2).$  Equivalently, since $\pistar_{2:m}$ and $\pistar_2$ together completely determine the alignment $\pistar$,  it suffices to show that the conditional distribution of $X^1$ given $(X^{2:m},\pistar)$ depends only on $(Y,\pistar_2).$ 

In view of the independence of the vectors in \eqref{eq:aligned_vectors}, it follows that given $(X^{2:m},\pistar)$ the entries $(X^1_e)_{e\in\En}$ are conditionally independent.  We thus need only consider for fixed $e$ the conditional distribution of $X^1_e$ given
$(X^{2:m},\pistar),$ which is the same as the conditional distribution of $X^1_e$ given
$(X^2_{\pistar_2(e)}, X^3_{\pistar_3(e)} \cdots , X^m_{\pistar_m(e)}),$ and show that such
conditional distribution is determined by $(Y,\pistar_2).$  This we do separately for the Gaussian case and \erdosrenyi\ case.

For the Gaussian case, given $\pistar$, the $m$-vector $(X^1_e, X^2_{\pistar_2(e)}, X^3_{\pistar_3(e)} \cdots , X^m_{\pistar_m(e)})$ has the ${\cal N}(0, (1-\rho)I + \rho J)$
distribution and $Y_{\pistar_2(e)} = X^2_{\pistar_2(e)} + \cdots + X^m_{\pistar_m(e)}.$  So the conditional distribution of $X^1_e$ given the other $m-1$ variables is Gaussian with conditional mean
\begin{align*}
\frac{\Cov (X^1_e,Y_{\pistar_2(e)})Y_{\pistar_2(e)}}{\Var(Y_{\pistar_2(e)})} = \frac{(m-1)\rho Y_{\pistar_2(e)}}{(m-1) + (m-1)(m-2)\rho}
= \frac{\rho Y_{\pistar_2(e)}}{1 + (m-2)\rho}
\end{align*}
and the variance of the conditional distribution depends only on $\rho$ and $m.$   Thus, the conditional distribution of $X^1_e$ given $(X^2_{\pistar_2(e)}, X^3_{\pistar_3(e)} \cdots , X^m_{\pistar_m(e)})$ only depends on $Y_{\pistar_2(e)}$, which is determined by $(Y,\pistar_2),$ completing the proof of the lemma in the Gaussian case.

Similarly, for the \erdosrenyi\ case, given $\pistar$, the $m$-vector $(X^1_e, X^2_{\pistar_2(e)}, X^3_{\pistar_3(e)} \cdots , X^m_{\pistar_m(e)})$ is given by
$(X^0_e\xi^1_e, \cdots , X^0_e\xi^m_e).$    Thus, if  $B = \big(X^2_{\pistar_2(e)}, X^3_{\pistar_3(e)} \cdots , X^m_{\pistar_m(e)} \big),$ then for any $b\in \{0,1\}^{m-1},$
\begin{align*}
    \P (X^1_e = 1 | B=b ) & = \frac{\P \big(\{X^1_e=1\}\cap \{B=b\}\big)}{P(B=b)} 
    = 
    \begin{cases} 
        s \,,  & \mbox{if } |b| \neq 0 \\
        \displaystyle\frac{ps(1-s)^{m-1}}{p(1-s)^{m-1} + 1 - p} \,, & \mbox{if } |b|=0 
    \end{cases} \, .
\end{align*}
Therefore, $\P (X^1_e = 1 | B )$ is determined by $\Indc_{\{|B|\neq 0\}}$ which is equal to $Y_{\pistar_2(e)}.$  Thus, the conditional distribution of $X^1_e$ given $(X^2_{\pistar_2(e)}, X^3_{\pistar_3(e)} \cdots , X^m_{\pistar_m(e)})$ only depends on $Y_{\pistar_2(e)}$, which is determined by $(Y,\pistar_2),$ completing the proof of the lemma in the \erdosrenyi\ case.
\end{proof}

\Cref{lemma:basic_Markov} implies that $\pistar_2 - (X^1,Y)-(X^{2:m},\pistar_{2:m})$ is also a Markov chain under $\P.$
So for the purpose of recovering $\pistar_2$ from observation of $(X,\pistar_{2:m}),$ it suffices to consider estimators $\widehat{\pi}_2$ that depend only on $(X^1, Y)$ for achieving any of the alignment properties defined above.
We therefore have proved the following theorem.

\medskip

\begin{theorem}  \label{thm:recovery_genie}
\begin{enumerate}[label=$\mathrm{(\alph*)}$]
    \item If exact (resp. almost-exact, partial) recovery of $\pistar_2$ from $(X^1,Y)$ is impossible then exact (resp. almost-exact, partial) recovery of $\pistar$ from $X$ is impossible. 
    \item If partial recovery of $\pistar_2$ from $(X^1,Y)$ is intractable then partial recovery of $\pistar$ from $X$ is intractable.
\end{enumerate}
\end{theorem}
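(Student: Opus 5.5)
The plan is to argue by contraposition and to chain together the reductions already set up in this section. Suppose some estimator $\hat{\pi}$ of $\pistar$ based on $X$ achieves exact (resp. almost-exact, partial) alignment. I will produce from it an estimator of $\pistar_2$ that depends only on $(X^1,Y)$ and achieves the same property. This contradicts the hypothesis of part (a). Part (b) follows the same route, with the property ``$\Pr{\ov\leq\epsilon}=1-o(1)$ for every estimator'' in place of achievability.

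The first step is to enlarge the observation. Regard $\hat{\pi}$ as an estimator based on $(X,\pistar_{2:m})$ that ignores $\pistar_{2:m}$. Its second coordinate $\hat{\pi}_2$ is then an estimator of $\pistar_2$ based on $(X,\pistar_{2:m})$. By the inequality $\ov(\pistar,\hat{\pi})\leq \ov(\pistar_2,\hat{\pi}_2)$ noted above, the events $\{\ov(\pistar,\hat\pi)\ge c\}$ are contained in $\{\ov(\pistar_2,\hat\pi_2)\ge c\}$. So $\hat{\pi}_2$ achieves the same recovery property for $\pistar_2$, with the same constants $c$ or $\epsilon$.

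The second step removes the dependence on $(X^{2:m},\pistar_{2:m})$ beyond $Y$. This is the only step needing care, since the estimator must be replaced by one that is a function of $(X^1,Y)$ alone. By \Cref{lemma:basic_Markov}, $\pistar_2-(X^1,Y)-(X^{2:m},\pistar_{2:m})$ is a Markov chain under $\P$. I will use a randomized estimator: given $(X^1,Y)$, draw $Z'$ from the conditional law of $(X^{2:m},\pistar_{2:m})$ given $(X^1,Y)$, using independent randomness, and set $\tilde{\pi}_2=\hat{\pi}_2(X^1,Y,Z')$. Here I use that $(X^1,Y)$ determines $X$ only together with $Z'$, so $\hat\pi_2$ is applied to $(X^1,Z')$. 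By the Markov property, the joint law of $(\pistar_2,X^1,Y,Z')$ equals that of $(\pistar_2,X^1,Y,(X^{2:m},\pistar_{2:m}))$. Hence $\ov(\pistar_2,\tilde\pi_2)$ has exactly the same distribution as $\ov(\pistar_2,\hat\pi_2)$, and every one of the three recovery properties transfers.

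If the definitions require deterministic estimators, I will derandomize. For exact recovery, the success probability is an average over the auxiliary randomness, so some realization does at least as well; alternatively, the MAP estimator given $(X^1,Y)$ dominates. For almost-exact and partial recovery, I will note that impossibility for randomized estimators is implied by, or equivalent to, the stated converses. The two-graph converses used later are proved for arbitrary, possibly randomized, estimators, so I expect this point to be only a formality.

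For part (b), the argument runs in the reverse direction. Given any estimator $\hat{\pi}$ based on $X$, the two steps yield an estimator $\tilde{\pi}_2$ based on $(X^1,Y)$, possibly randomized. Intractability for $(X^1,Y)$ gives $\Pr{\ov(\pistar_2,\tilde\pi_2)\le\epsilon}=1-o(1)$. Equality in distribution transfers this to $\ov(\pistar_2,\hat\pi_2)$, and the inequality $\ov(\pistar,\hat\pi)\le\ov(\pistar_2,\hat\pi_2)$ transfers it to $\ov(\pistar,\hat\pi)$. So partial recovery of $\pistar$ from $X$ is intractable.
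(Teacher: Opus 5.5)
Your proof is correct and follows the paper's argument. Like the paper, you reveal $\pistar_{2:m}$ as side information, pass from $\ov(\pistar,\hat\pi)$ to $\ov(\pistar_2,\hat\pi_2)$ via the overlap inequality, and then use the Markov chain $\pistar_2-(X^1,Y)-(X^{2:m},\pistar_{2:m})$ from \Cref{lemma:basic_Markov} to restrict to estimators based on $(X^1,Y)$; your randomized simulation of $(X^{2:m},\pistar_{2:m})$ from its conditional law given $(X^1,Y)$, with averaging to derandomize, just makes rigorous the paper's one-line ``it suffices to consider estimators that depend only on $(X^1,Y)$,'' and you rightly note that the converse direction needs only the one-sided overlap inequality, not the paper's $\hat\pi'$ construction.
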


Note that exact (resp. almost-exact, partial) recovery of $\pi_2^*$ from $(X^1,Y)$ is equivalent to exact (resp. almost-exact, partial) alignment of $(X^1,Y)$; similarly, recovery of $\pi^*$ corresponds to alignment of $X$.
In order to apply \Cref{thm:recovery_genie} we need to identify the joint distribution of $\pistar_2, X^1, Y$ in both the Gaussian and \erdosrenyi\ cases.  In either case, $\pistar_{2}$ is uniformly distributed over $\Sn$ and independent of $X^1$ and, given $\pistar_{2}$,  the pairs $(X^1_e, Y_{\pistar_2(e)})_{e\in \En}$ are mutually independent.

\subsection{Application to the Gaussian model}
\label{sec:Gaussian_case_recovery}
For the Gaussian case, given $\pistar_{2}$ and for any $e\in \En,$  $X^1_e$ and  $Y_{\pistar_2(e)}$ are jointly Gaussian with mean zero.  $Y_{\pistar_2(e)}$ is the sum of $m-1$ standard normal variables with correlation $\rho$  among themselves and with $X^1_e.$  Therefore,
\begin{align*}
    \Var(Y_{\pistar_2(e)}) = (m-1) + (m-1)(m-2) \rho ~~~~ \Cov(X^1_e,Y_{\pistar_2(e)})=(m-1)\rho.
\end{align*}
Hence, for any $e\in \En,$ the correlation coefficient between $X^1_e$ and $Y_{\pistar_2(e)}$ given $\pistar_2$ is
\begin{align*}
  \rho_m =   \frac{(m-1)\rho}{\sqrt{1}\sqrt{(m-1) + (m-1)(m-2)\rho} } = \rho \times \sqrt{\frac{m-1}{(m-2)\rho + 1}}.
\end{align*}
Recovering $\pistar_2$ from $(X^1,Y)$ is thus equivalent to the Gaussian alignment problem for two matrices with $\rho$ replaced by $\rho_m.$    By~\cite{wu2022settling} partial alignment is impossible if $n\rho_m^2 \leq (4-\epsilon')\log n$ or equivalently:
\begin{align}   \label{eq:n_rho2}
    n \rho^2 \times \frac{m-1}{(m-2)\rho + 1} \leq (4-\epsilon') \log n,
\end{align}
for some fixed $\epsilon'>0.$   (Note that even though the distribution of $(X^1,Y)$ is not symmetric in $X^1$ and $Y$, the equivalent observation $(X^1,Y/\sqrt{(m-1) + (m-1)(m-2)\rho})$ is symmetric, and only the correlation coefficient $\rho_m$ matters.)
The factor $(m-1)/((m-2)\rho +1)$ equals $1$ in the well-studied setting of $m=2$, and captures the role of the number of observations. As $m\to\infty$, the factor converges to $1/\rho$.

As $n\to\infty$, \eqref{eq:n_rho2} requires $\rho\to 0$ so we can absorb the effect of the denominator in the LHS into the $\epsilon'$ on the RHS. So the above condition is equivalent to:
\[
     \rho^2 \leq \frac{4-\epsilon}{m-1} \cdot \frac{\log n}{n}
\]
for some $\epsilon>0$. This yields the following corollary of the converse in
\cite{wu2022settling} and \Cref{thm:recovery_genie}.

\medskip 

\begin{corollary} \label{cor:Gaussian_recovery_converse}
(Gaussian model, $m\geq 2$)
    Let $\epsilon >0$ denote an arbitrarily small but fixed constant. If
    \[
        \rho^2 \leq \frac{4-\epsilon}{m-1} \cdot \frac{\log n}{n},
    \]
    then partial alignment is intractable. (Hence exact alignment and almost-exact alignment are impossible.)
\end{corollary}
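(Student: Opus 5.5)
The plan is to reduce to the two-graph Gaussian problem via \Cref{thm:recovery_genie}(b) and then invoke the partial-alignment converse of \cite{wu2022settling} with correlation $\rho_m$. The key steps, in order, are as follows.

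\emph{Step 1: identify the reduced problem.} By the discussion in \Cref{sec:Gaussian_case_recovery}, given $\pistar_2$ the pairs $(X^1_e, Y_{\pistar_2(e)})$ are i.i.d.\ centered Gaussian. Their correlation coefficient is
\[
\rho_m=\rho\sqrt{\tfrac{m-1}{(m-2)\rho+1}}.
\]
Rescale $Y$ by the deterministic constant $\sqrt{(m-1)+(m-1)(m-2)\rho}$; this is a bijection of the observation, so it preserves the feasibility of every estimator. After rescaling, recovering $\pistar_2$ from $(X^1,Y)$ is exactly the standard symmetric two-graph Gaussian alignment problem with correlation $\rho_m$ and uniform planted permutation $\pistar_2$.

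\emph{Step 2: verify the hypothesis of \cite{wu2022settling}.} That result gives intractability of partial alignment when $n\rho_m^2\le(4-\epsilon')\log n$ for a fixed $\epsilon'>0$. Under the assumed bound, $\rho^2\le \frac{4-\epsilon}{m-1}\frac{\log n}{n}$, so $\rho=O(\sqrt{\log n/n})\to0$ since $m$ is fixed. Hence $(m-2)\rho+1=1+o(1)$. It follows that
\[
n\rho_m^2=\frac{n\rho^2(m-1)}{(m-2)\rho+1}\le\frac{(4-\epsilon)\log n}{1+o(1)}\le (4-\epsilon/2)\log n
\]
for large $n$, so we may take $\epsilon'=\epsilon/2$.

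\emph{Step 3: conclude.} \Cref{thm:recovery_genie}(b) transfers intractability of partial recovery of $\pistar_2$ from $(X^1,Y)$ to intractability of partial alignment of $\pistar$ from $X$. The parenthetical claims then follow because intractability implies impossibility of partial alignment, and exact and almost-exact alignment each imply partial alignment.

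The main obstacle is confirming that the converse in \cite{wu2022settling} is really the strong statement: that $\ov\le\epsilon$ holds with probability $1-o(1)$ for every estimator, not merely that partial recovery fails. It must also apply with $\rho_m$ varying with $n$ under the stated scaling. If the cited result only gives impossibility, the fallback is to re-derive intractability from their mutual-information bound $I(\pistar_2;X^1,Y)=o(n\log n)$. Combined with a Fano-type counting argument, this shows that any fixed positive overlap has vanishing probability: the number of permutations within overlap $\epsilon n$ of a given one is at most $n!\,/\,(\epsilon n)!$ times a binomial factor, giving an $\epsilon n\log n$ entropy gap.
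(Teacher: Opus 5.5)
Your proposal is correct and is essentially the paper's argument: you reduce via \Cref{thm:recovery_genie}(b) to the two-graph Gaussian problem with correlation $\rho_m$, check $n\rho_m^2\le n\rho^2(m-1)\le(4-\epsilon)\log n$ (since $(m-2)\rho+1\ge 1$, no absorption into $\epsilon'$ is even needed), and apply \cite{wu2022settling}, whose Gaussian converse is indeed the all-or-nothing statement that every estimator has overlap at most any fixed $\delta$ with probability $1-o(1)$. Your fallback is unnecessary, and it would also fail as stated: near the threshold, $I(\pistar_2;X^1,Y)$ is about $N\rho_m^2/2\approx(1-\epsilon/4)\,n\log n$, not $o(n\log n)$, so a Fano-type bound only rules out overlap close to $1$ (this is why \cite{wu2022settling} needs the I-MMSE area argument).
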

This corollary is not tight.  It is shown in \cite{vassaux2025} that partial alignment is intractable if $\rho^2 \leq \frac {8-\epsilon} m \cdot \frac{\log n} n.$  That is the sharp threshold -- \cite{vassaux2025} also shows that exact alignment is possible if $\rho^2 \geq \frac {8+\epsilon} m \cdot \frac{\log n} n.$     Roughly speaking, we can conclude that in the Gaussian case, it is strictly more difficult to align all $m$ graphs than it is to align the last graph if the alignment of the other $m-1$ graphs is given.

\subsection{Towards application to the \erdosrenyi\ model}
\label{sec:towards_ER}

For the \erdosrenyi\ case, given $\pistar_{12}$ and for any $e\in \En,$  $X^1_e$ and 
$Y_{\pistar_2(e)}$ are binary random variables.

We need to identify the joint distribution of $X^1_e, Y_{\pistar_2(e)}$ for each $e$.
Note that
\begin{align*}
X^1_e = X^0_e \, \xi^1_e \hspace{1.5cm}
    Y_{\pistar_2(e)}  = X^2_{\pistar_2(e)}\vee X^3_{\pistar_3(e)}\vee \cdots  \vee X^m_{\pistar_m(e)} =  X^0_e \, \widetilde{\xi^2_e}
\end{align*}
where
\begin{align*}
    \widetilde{\xi^2_e} =   \xi^2_e \vee  \xi^3_e \vee \cdots \vee \xi^m_e .  
\end{align*}
Note that $X_e^0, \xi_e^1, \widetilde{\xi}^2_e$ are mutually independent for each $e$ and
\[
        X^0_e\sim \Bern(p),
        ~~~~~~~~~~~~
        \xi^1_e\sim \Bern(s),
        ~~~~~~~~~~~
         \widetilde{\xi}^2_e\sim \Bern(1 - (1-s)^{m-1}).
\]
Equivalently, the joint distribution of $(X^1,Y)$ is the same as starting with the \erdosrenyi$(n,p)$ incidence matrix $X^0,$  subsampling using $s$ to get $X^1$ and independently subsampling it again using $s'=1-(1-s)^{m-1}$ to get a second graph and applying a uniform random permutation to the second graph to get $Y.$

The information theoretic achievability and converse bounds in the literature for both alignment and detection for two correlated \erdosrenyi\ graphs involve $s^2$, at least in the regime $s^2\to 0$.
It is shown in Section \ref{sec:converses_asymmetric} that those converses extend, with minor changes to the proofs, to the asymmetric  \erdosrenyi\ problems with  parameters $n,s_1,s_2$ by replacing $s^2$ by $s_1s_2.$ This is due to the fact that a key role is played by the intersection graph with the incidence matrix $ X^1_e \wedge X^2_{\pistar(e)}$ which is an \erdosrenyi$(n,ps_1 s_2)$ graph.
Using that method we present corollaries of \Cref{thm:recovery_genie} for $m\geq 2$ \erdosrenyi\ graphs in \Cref{sec:ERalignment_converses}.

\section{Last-matching converse method for detection}
\label{sec:genie-detection}
We show in this section that a similar last-matching method can be used to deduce converses for weak detection.  As for the case of alignment, the idea for detection is to leverage converses for $m=2$ to get converses for $m\geq 3.$  The method uses induction on $m$ and we provide here a proposition for the induction step from $m-1$ to $m$ for $m\geq 3.$  The following three lemmas with basic properties of $\TV$ distance will be used.

\medskip
\begin{lemma} \label{lemma:var_same_conditional}
    Suppose $P_{XZ}$ and $Q_{XZ}$ are two joint probability distributions for random variables $X,Z.$  Let $P_X$ and $Q_X$ denote the corresponding marginal probability distributions of $X$.  If the law of $(Z|X)$ (i.e. the conditional distribution of $Z$ given $X$) is the same for $P$ and $Q$ then $\TV(P_{XZ}, Q_{XZ}) = \TV(P_X,Q_X).$  
\end{lemma}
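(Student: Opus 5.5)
The plan is to compute the total variation distance through the disintegration of each joint law into the common $X$-marginal and the common conditional kernel. Write $K(\cdot\mid x)$ for the common conditional distribution of $Z$ given $X=x$, so that $P_{XZ}(dx,dz)=P_X(dx)K(dz\mid x)$ and $Q_{XZ}(dx,dz)=Q_X(dx)K(dz\mid x)$. To avoid measurability subtleties, I would work with densities with respect to a dominating measure: let $\mu$ dominate $P_X$ and $Q_X$ (for instance $\mu=P_X+Q_X$), with densities $p=dP_X/d\mu$ and $q=dQ_X/d\mu$. Then $P_{XZ}$ and $Q_{XZ}$ are both absolutely continuous with respect to $\mu\otimes K$, defined by $(\mu\otimes K)(dx,dz)=\mu(dx)K(dz\mid x)$, with densities $p(x)$ and $q(x)$ respectively, neither of which depends on $z$.

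Using $\TV(P,Q)=\frac12\int|dP/d\nu-dQ/d\nu|\,d\nu$ for any dominating $\nu$, we get
\[
\TV(P_{XZ},Q_{XZ})=\frac12\int\!\!\int |p(x)-q(x)|\,K(dz\mid x)\,\mu(dx)=\frac12\int |p(x)-q(x)|\,\mu(dx)=\TV(P_X,Q_X),
\]
where the middle equality uses $\int K(dz\mid x)=1$ for each $x$. That completes the argument.

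Alternatively, and as a sanity check, the inequality $\TV(P_X,Q_X)\le \TV(P_{XZ},Q_{XZ})$ holds because marginalization is a data-processing map. The reverse inequality holds because $(X,Z)$ is produced from $X$ by the same Markov kernel $K$ under both measures, so $\TV$ cannot increase.

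The only delicate point is the existence of a regular conditional distribution, so that $K$ is a genuine kernel. In the paper's setting all variables take values in standard Borel spaces (finite sets or Euclidean spaces), so this is automatic. For discrete variables the computation is just a sum $\frac12\sum_{x,z}|p(x)-q(x)|k(z\mid x)$, which is immediate.
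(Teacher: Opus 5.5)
Your proof is correct. The paper gives no proof of this lemma: it lists it, together with \Cref{lemma:var_conditional,lemma:var_extend}, as a basic property of total variation and uses it without argument. So there is no competing route to compare against.

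Your density computation with respect to $\mu\otimes K$, where $\mu=P_X+Q_X$, proves the equality directly. Your second argument is an equally rigorous alternative. Marginalization gives $\TV(P_X,Q_X)\le\TV(P_{XZ},Q_{XZ})$. Pushing both marginals through the common kernel $x\mapsto\delta_x\otimes K(\cdot\mid x)$ gives the reverse inequality by data processing. This is the same kind of reasoning the paper uses informally elsewhere, e.g.\ ``including more variables cannot decrease variational distance.''

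One point is worth making explicit. Conditional laws are determined only $P_X$-a.e.\ and $Q_X$-a.e., so the hypothesis should be read as the existence of a single kernel $K$ that is a version of the conditional law under both measures. That is exactly what you assume, and it loses no generality. Agreement ($\mu$-a.e.) on the set where both densities $p$ and $q$ are positive already lets you build such a common version: use $K_P$ on $\{p>0\}$ and $K_Q$ on $\{p=0\}$.
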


\medskip

\begin{lemma}  \label{lemma:var_conditional}
    For any probability measure $P$ and event $\calE$, the distance between $P$ and the conditional distribution of $P$ given $\calE$ satisfies: 
    \(  
        \TV(P,P(\cdot |\calE))\leq P(\calE^c).
    \)
\end{lemma}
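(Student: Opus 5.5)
The plan is to argue directly from the definition of total variation as a supremum over events, after splitting an arbitrary event according to $\calE$. I will use the convention $\TV(P,Q)=\sup_A |P(A)-Q(A)|$, which is the one under which the stated bound is sharp; under the convention without the factor $\tfrac12$, the same argument gives at most twice the bound. Write $P_{\calE}=P(\cdot\,|\,\calE)$. If $P(\calE)=0$ the conditional distribution is undefined, and if we interpret it arbitrarily the bound $\TV\le 1=P(\calE^c)$ holds trivially. So we assume $P(\calE)>0$.

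Fix a measurable set $A$. First I would decompose
\[
P(A)=P(A\cap\calE)+P(A\cap\calE^c)
\qquad\text{and}\qquad
P_{\calE}(A)=\frac{P(A\cap\calE)}{P(\calE)} .
\]
For the difference $P_{\calE}(A)-P(A)$ this gives
\[
P_{\calE}(A)-P(A) = P(A\cap\calE)\Big(\frac{1}{P(\calE)}-1\Big) - P(A\cap \calE^c).
\]
The first term lies in $[0,\,P(\calE)(1/P(\calE)-1)] = [0,\,P(\calE^c)]$, using $P(A\cap\calE)\le P(\calE)$. The second term lies in $[0,\,P(\calE^c)]$. A difference of two numbers that both lie in $[0,P(\calE^c)]$ has absolute value at most $P(\calE^c)$. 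Therefore $|P_{\calE}(A)-P(A)|\le P(\calE^c)$ for every $A$, and taking the supremum over $A$ gives the lemma.

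As a sanity check, the choice $A=\calE^c$ gives $P(\calE^c)-0$, so the bound is attained and the convention matches. I do not expect any real obstacle here. The only delicate point is the normalization of $\TV$, together with the degenerate case $P(\calE)=0$, both of which are handled above.
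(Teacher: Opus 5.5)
Your proof is correct. The decomposition
$P(A\,|\,\calE)-P(A)=P(A\cap\calE)\,\frac{P(\calE^c)}{P(\calE)}-P(A\cap\calE^c)$
writes the difference as a difference of two numbers in $[0,P(\calE^c)]$, and you handle the degenerate case $P(\calE)=0$ properly.

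The paper gives no proof of this lemma; it is listed among basic properties of $\TV$. Your normalization $\TV(P,Q)=\sup_A|P(A)-Q(A)|$ is the one the paper uses. For example, \Cref{lemma:var_extend} sets $P(S=0)=\TV(P_X,Q_X)$ as the non-overlapping mass.

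A slightly shorter route, for $0<P(\calE)<1$, is the identity
$P-P(\cdot\,|\,\calE)=P(\calE^c)\bigl(P(\cdot\,|\,\calE^c)-P(\cdot\,|\,\calE)\bigr)$.
The two conditional laws are mutually singular, so this gives $\TV(P,P(\cdot\,|\,\calE))=P(\calE^c)$ exactly. That agrees with your sanity check at $A=\calE^c$.
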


\medskip

\begin{lemma}   \label{lemma:var_extend}
    Let $P_X$ and $Q_X$ be probability distributions for a random vector $X$.  These distributions can be extended to joint distributions $P$ and $Q$ for random variables $S,X$ such that:
    \begin{itemize}
    \item   $P(S\in \{0,1\}) = Q(S\in \{0,1\}) = 1.$
    \item   $P(S=0)=Q(S=0)=\TV(P_X,Q_X) = \TV(P,Q)$
    \item   The marginal distribution of $X$ under $P$ is $P_X,$
    \item   The marginal distribution of $X$ under $Q$ is $Q_X $
    \item   $ P_{X|S=1} = Q_{X|S=1} $      (equality of conditional distributions.)
    \end{itemize}
\end{lemma}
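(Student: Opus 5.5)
We construct the extension directly from a maximal coupling of $P_X$ and $Q_X$, written in terms of densities. Let $\mu = P_X + Q_X$, which dominates both measures, and write $p = dP_X/d\mu$ and $q = dQ_X/d\mu$. Set $\delta = \TV(P_X,Q_X) = \int (p-q)^+\,d\mu = \int (q-p)^+\,d\mu$, so that $\int \min(p,q)\,d\mu = 1-\delta$. The idea is to split each of $P_X$ and $Q_X$ into a common part of mass $1-\delta$ and a residual part of mass $\delta$. The label $S=1$ will mark the common part and $S=0$ the residual.

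The construction proceeds as follows.
\begin{itemize}
\item If $\delta=0$, take $S\equiv 1$ under both measures.
\item If $\delta=1$, take $S\equiv 0$ under both measures.
\item Otherwise, for $0<\delta<1$, define the joint laws by
\[
P(S=1, X\in dx) = \min(p,q)\,d\mu, \qquad P(S=0, X\in dx) = (p-\min(p,q))\,d\mu,
\]
\[
Q(S=1, X\in dx) = \min(p,q)\,d\mu, \qquad Q(S=0, X\in dx) = (q-\min(p,q))\,d\mu .
\]
\end{itemize}

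All the densities above are nonnegative, and their sum over $S$ is $p$ (respectively $q$). Hence the marginal of $X$ is $P_X$ under $P$ and $Q_X$ under $Q$, and $S\in\{0,1\}$ almost surely under both. Integrating gives $P(S=0) = \int (p-q)^+\,d\mu = \delta$ and likewise $Q(S=0)=\delta$. Conditioning on $S=1$ gives the same law $\min(p,q)\,d\mu/(1-\delta)$ under $P$ and under $Q$, which proves $P_{X|S=1}=Q_{X|S=1}$.

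The remaining claim is $\TV(P,Q)=\TV(P_X,Q_X)$. The inequality $\TV(P,Q)\ge \TV(P_X,Q_X)$ holds because total variation cannot increase under marginalization. For the reverse inequality we compute $\TV(P,Q)$ directly as half the $L^1$ distance of the joint densities with respect to $\mu$ times counting measure on $S$:
\[
\TV(P,Q)=\tfrac12\Big(0 + \int \big|(p-\min(p,q)) - (q-\min(p,q))\big|\,d\mu\Big) = \tfrac12\int|p-q|\,d\mu = \delta .
\]
The $S=1$ slice contributes $0$ since the two joint densities agree there. On the $S=0$ slice the difference reduces to $p-q$, because the two residuals $(p-q)^+$ and $(q-p)^+$ have disjoint supports.

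No step here is a real obstacle. The only care needed is to handle the degenerate cases $\delta\in\{0,1\}$ separately, so that the conditional laws given $S=1$ are well defined. When $\delta=1$ the conditional equality is vacuous; any fixed common law can be assigned to $X$ given $S=1$, since that event has probability zero.
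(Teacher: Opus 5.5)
Your proof is correct. The paper states this lemma without proof as a basic property of total variation, and your construction is the standard maximal-coupling argument it relies on: the common part $\min(p,q)\,d\mu$ is labeled $S=1$, and the disjointly supported residuals $(p-q)^+\,d\mu$ and $(q-p)^+\,d\mu$ are labeled $S=0$. Your handling of the degenerate cases is also fine; the general formulas already cover $\delta\in\{0,1\}$.

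One useful feature is worth keeping: your construction defines $S$ through explicit channels from $X$, namely $P(S=1\mid X=x)=\min\{1,q(x)/p(x)\}$ and $Q(S=1\mid X=x)=\min\{1,p(x)/q(x)\}$ on the respective supports. This is precisely the form in which the lemma is used in the proof of \Cref{thm:detection_genie}.
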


Using \Cref{lemma:var_same_conditional,lemma:var_extend} we shall consider extensions of $\P$ and $\Q$ to a larger ensemble of random objects and continue to use $\P$ and $\Q$ to denote the extensions.
For a given subset of the objects we denote the marginal distribution of that set of objects under $\P$ or $\Q$ by using either $\P$ or $\Q$ with the objects as subscripts with no commas.  For example, $\P_{X^2X^3\pi_{23}^*}$ denotes the marginal distribution of $(X^2,X^3,\pi_{23}^*)$ under probability distribution $\P$.

Although we will frame the theorem in terms of $\TV$ distances, the intuitive idea is that we suppose a genie provides the decision maker an alignment of matrices $X^2, \cdots , X^m$ which under $H_1$ is the true alignment.  

We continue to use the notation $X^{2:m} = (X^2,\cdots , X^m)$
and $\pistar_{2:m} = (\pistar_{k\ell}:2\leq k,\ell \leq m).$ 
Recall that the joint distribution of $(X,\pistar)$ is defined under $\P$ while $\Q$ only specifies a distribution for $X.$
We extend the probability distribution $\Q$ to a joint distribution of $(X, \pistar_{2:m})$ by requiring equality of conditional distributions: $\Q_{\pistar_{2:m}|X}  =  \Q_{\pistar_{2:m}|X^{2:m}}  = \P_{\pistar_{2:m}|X^{2:m}}.$    Under $\Q$,
$X^1$ is independent of $X^{2:m}$ and since the conditional distribution
of $\pistar_{2:m}$ given $X$ only depends on $X^{2:m}$, it follows that
$X^1$ is independent of $(X^{2:m},\pistar_{2:m})$ (under $\Q$).

\Cref{lemma:basic_Markov} plays an important role in this section.  For both the Gaussian and \erdosrenyi\ it identifies a matrix $Y$ as a function of $(X^{2:m},\pistar_{2:m})$ such that the Markov property \Cref{lemma:basic_Markov} holds.

\medskip 

\begin{theorem} \label{thm:detection_genie}
Suppose $m\geq 3$ and let $\epsilon_m = \TV(\P_{X^{2:m}}, \Q_{X^{2:m}}).$ Then 
\begin{align*} 
    \TV(\P_X,\Q_X) \leq \TV(\P_{X^1Y},\Q_{X^1Y})   + 4 \, \epsilon_m  \, .
\end{align*}
\end{theorem}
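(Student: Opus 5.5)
The plan is to pass from $X$ to the enlarged observation $(X,\pistar_{2:m})$, and to insert an intermediate distribution $\mathsf{R}$ that interpolates between $\P$ and $\Q$. Since $X$ is a function of $(X,\pistar_{2:m})$, we have $\TV(\P_X,\Q_X)\le \TV(\P_{X\pistar_{2:m}},\Q_{X\pistar_{2:m}})$. Set $W=(X^{2:m},\pistar_{2:m})$. By construction $\Q_{\pistar_{2:m}|X^{2:m}}=\P_{\pistar_{2:m}|X^{2:m}}$, so \Cref{lemma:var_same_conditional} gives
\[
\TV(\P_W,\Q_W)=\TV(\P_{X^{2:m}},\Q_{X^{2:m}})=\epsilon_m .
\]
Since $Y$ is a function of $W$, it follows that $\TV(\P_Y,\Q_Y)\le\epsilon_m$ as well.

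Define $\mathsf{R}$ on $(X^1,W)$ by letting $W\sim\Q_W$, and, given $W$, drawing $X^1$ from the kernel $\P_{X^1|Y}$ evaluated at $Y=Y(W)$. By \Cref{lemma:basic_Markov}, under $\P$ the conditional law of $X^1$ given $W$ is exactly $\P_{X^1|Y}$. Hence $\P$ and $\mathsf{R}$ share the conditional law of $X^1$ given $W$, and \Cref{lemma:var_same_conditional} yields $\TV(\P_{X^1W},\mathsf{R}_{X^1W})=\TV(\P_W,\Q_W)=\epsilon_m$.

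Next we compare $\mathsf{R}$ with $\Q$. The key observation is that under both measures $X^1$ depends on $W$ only through $Y$. Under $\mathsf{R}$ this holds by construction. Under $\Q$, $X^1$ is independent of $W$ (as noted before the theorem), so it holds trivially. Both measures also have the same marginal $\Q_W$. It follows that, under both, the conditional law of $W$ given $(X^1,Y)$ equals $\Q_{W|Y}$: for a Markov chain $X^1-Y-W$, the law of $W$ given $(X^1,Y)$ is the law of $W$ given $Y$, and that law is determined by the common marginal $\Q_W$. Applying \Cref{lemma:var_same_conditional} with conditioning variable $(X^1,Y)$ then gives $\TV(\mathsf{R}_{X^1W},\Q_{X^1W})=\TV(\mathsf{R}_{X^1Y},\Q_{X^1Y})$.

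It remains to compare $\mathsf{R}_{X^1Y}$ with $\P_{X^1Y}$. These two measures share the kernel $\P_{X^1|Y}$, and their $Y$-marginals are $\Q_Y$ and $\P_Y$ respectively. So by \Cref{lemma:var_same_conditional} again, $\TV(\mathsf{R}_{X^1Y},\P_{X^1Y})=\TV(\Q_Y,\P_Y)\le\epsilon_m$. Combining the pieces with the triangle inequality gives
\[
\TV(\P_X,\Q_X)\le \epsilon_m+\TV(\P_{X^1Y},\Q_{X^1Y})+\epsilon_m ,
\]
which is stronger than the claimed bound with constant $4$. If the measure-theoretic details of defining the kernel of $\mathsf{R}$ become awkward, \Cref{lemma:var_extend} and \Cref{lemma:var_conditional} can serve as a fallback: couple $\P$ and $\Q$ on $X^{2:m}$ through a variable $S$, condition on $\{S=1\}$, and pay $\epsilon_m$ twice more. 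This is presumably where the constant $4$ comes from.

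The step I expect to need the most care is the $\mathsf{R}$-versus-$\Q$ comparison. There one must verify that the conditional law of $W$ given $(X^1,Y)$ really coincides under the two measures. This rests on the Markov structure $X^1-Y-W$ holding under both $\mathsf{R}$ and $\Q$, and on the two measures having identical $W$-marginals. It is also exactly the point at which \Cref{lemma:basic_Markov} enters.
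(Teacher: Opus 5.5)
Your proof is correct, and it takes a different route from the paper's.

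\textbf{How the paper argues.} The paper does not build an intermediate measure. It applies \Cref{lemma:var_extend} to $X^{2:m}$ to adjoin a coupling bit $S$ with $\P(S=0)=\Q(S=0)=\epsilon_m$ and $\P_{X^{2:m}|S=1}=\Q_{X^{2:m}|S=1}$. The bit is attached through channels so that $X^1 - Y - W - X^{2:m} - S$ is Markov under both measures, with $X^1$ independent of the rest under $\Q$.

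On $\{S=1\}$ the genie output $W=(X^{2:m},\pistar_{2:m})$ has the same law under both hypotheses. Hence the conditional law of $W$ given $(S=1,Y,X^1)$ coincides, and the problem collapses to $(X^1,Y)$ given $S=1$. Conditioning on $\{S=1\}$ costs $2\epsilon_m$ via \Cref{lemma:var_conditional}, and removing the conditioning costs another $2\epsilon_m$. That is exactly where the $4$ comes from, as you guessed.

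\textbf{How your argument differs.} Your hybrid measure $\mathsf{R}$ (null law on $W$, planted kernel $\P_{X^1|Y}$ for $X^1$) replaces the coupling altogether. \Cref{lemma:basic_Markov} makes $\mathsf{R}$ share with $\P$ the kernel of $X^1$ given $W$. By construction, $\mathsf{R}$ shares with $\Q$ both the $W$-marginal and the Markov structure $X^1 - Y - W$. So two applications of \Cref{lemma:var_same_conditional} plus data processing give the sharper constant $2$, with no need for \Cref{lemma:var_extend} or \Cref{lemma:var_conditional}.

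Contrary to your closing remark, \Cref{lemma:basic_Markov} is used in the $\P$-versus-$\mathsf{R}$ step. In the $\mathsf{R}$-versus-$\Q$ step the Markov property of $\mathsf{R}$ holds by construction.

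\textbf{What each approach buys.} The paper's version directly formalizes the genie intuition: on the coupling event, the genie's output is indistinguishable under the two hypotheses. The price is a worse constant, which is immaterial in the applications since $\epsilon_m=o(1)$ there. Your version is shorter and gives the better constant.

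\textbf{A remark on the target quantity.} With the paper's extension, $\Q_{X^1Y}=\P_{X^1}\otimes\Q_Y$. The paper's chain, however, actually terminates at $\TV(\P_{X^1Y},\P_{X^1}\otimes\P_Y)$, which is the genuine two-graph detection distance controlled by the two-graph converses. Your argument also handles that version with constant $2$.

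Since $\mathsf{R}_{X^1Y}$ and $\Q_{X^1Y}$ share the $Y$-marginal $\Q_Y$, $\TV(\mathsf{R}_{X^1Y},\Q_{X^1Y})$ is the $\Q_Y$-average of $\TV(\P_{X^1|Y=y},\P_{X^1})$. Likewise, $\TV(\P_{X^1Y},\P_{X^1}\otimes\P_Y)$ is the $\P_Y$-average of the same $[0,1]$-valued function. These two averages differ by at most $\TV(\P_Y,\Q_Y)\le\epsilon_m$.
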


\medskip

\begin{remark}  \label{remark:on_TV_bnd}
\begin{enumerate}[label=$\mathrm{(\alph*)}$]
    \item 
    To apply \Cref{thm:detection_genie} suitable assumptions on the parameters ($p,s$ or $\rho$) should be in force such that $\epsilon_m =o(1)$ based on an induction hypothesis and then even stronger conditions (i.e. even smaller upper bounds on $p,s$ or $\rho$) can be sought to make $\TV(\P_{X^1Y},\Q_{X^1Y}) = o(1).$ This latter quantity is the $\TV$ distance for a hypothesis testing problem (asymmetric in the \erdosrenyi\ case) for two observed matrices, namely $X^1$ and $Y.$   See the proof of \Cref{cor:Gaussian_detection_converse} below for details.
    \item To gain some intuition, note that since increasing the number of variables cannot decrease $\TV$ distance, we have $\TV(\P_X,\Q_X) \leq \TV(\P_{X \pistar_{2:m}}, \Q_{X \pistar_{2:m}}).$  Furthermore, if $\epsilon_m=0$ then the conditional distribution of $(X^{2:m},\pistar_{2:m})$ given $(X^1,Y)$ would be the same under $\P$ and $\Q$, implying, by Lemma \ref{lemma:var_same_conditional} with $Z=(X^{2:m},\pistar_{2:m})$,
    that $\TV(\P_{X\pistar_{2:m}},\Q_{X\pistar_{2:m}}) = \TV(\P_{X^1Y},\Q_{X^1Y}).$
    \item The idea for the proof of \Cref{thm:detection_genie} in terms of a decision maker faced with the hypothesis testing problem is the following.   Even if there were a genie available that revealed to the decision maker how matrices $X^2$ through $X^m$ were possibly aligned, the decision maker would not do better than guessing which hypothesis is true based on the information from the genie and the $m$ matrices.  A key idea is that in case the null hypothesis is true, the genie should reveal a plausible alignment of matrices $X^2$ through $X^m.$  That idea motivated our choice of joint distribution for $X$ and $\pistar_{2:m}$ under $\Q$ defined above.
\end{enumerate}
\end{remark}

\begin{proof}  In this paragraph we briefly consider extensions of $\P$ and $\Q$ different from above, so we will use $\widetilde{\P}$ and $\widetilde{\Q}$ for these extensions.
Focus on the distribution of $X^{2:m}$ under $\P$ and $\Q$.
By \Cref{lemma:var_extend} we can extend $\P_{X^{2:m}}$ and $\Q_{X^{2:m}}$ to $\widetilde{\P}$ and $\widetilde{\Q}$ so that there is a binary random variable $S$ jointly distributed with $X^{2:m}$ so that
\begin{align}  \label{eq:two_conditions}
\widetilde{\P}(S=0)=\widetilde{\Q}(S=0)=\epsilon_m \hspace{.2cm} \mbox{and} \hspace{.2cm} \widetilde{\P}_{X^{2:m}|S=1}=\widetilde{\Q}_{X^{2:m}|S=1}.
\end{align} Equivalently, there exist choices of conditional probability distributions $\widetilde{\P}_{S|X^{2:m}}$ and $\widetilde{\Q}_{S|X^{2:m}}$ so that when $\widetilde{\P}_{X^{2:m}}$ and $\widetilde{\Q}_{X^{2:m}}$ are extended using those conditional distributions the properties in \eqref{eq:two_conditions} hold.

Using the conditional probability distributions defined in the previous paragraph (thinking of them as channels as in multiple user information theory), $S$ can also be adjoined to the larger ensemble $(X^1,Y,X^{2:m},\pistar_{2:m})$ under $\P$ and $\Q$ such that \eqref{eq:two_conditions} holds with $\widetilde{\P}$ and $\widetilde{\Q}$ replaced by $\P$ and $\Q$ and under either $\P$ or $\Q$: 
\[
    X^1 \text{ --- } Y \text{ --- } (X^{2:m},\pistar_{2:m}) \text{ --- } X^{2:m}  \text{ --- } S
\]
is a Markov sequence.  And under $\Q$:  $X^1$ is independent of $(Y,X^{2:m},\pistar_{2:m},S).$

By the choice of $\pistar_{2:m}$ and $S$ and the fact that $Y$ is a function of $(X^{2:m},\pistar_{2:m})$ it follows that the law of $(Y,\pistar_{2:m}, X^{2:m}|S=1)$ is the same under $\P$ and $\Q$.   Therefore, the law of $(\pistar_{2:m}, X^{2:m}|S=1,Y)$ is also the same under $\P$ and $\Q$.   By the Markov property (under $\P$) and independence property (under $\Q$) discussed in the previous paragraph, adding in conditioning on $X^1$ does not change the conditional distributions. In other words, the law of
$(\pistar_{2:m}, X^{2:m}|S=1,Y,X^1)$   is the same under $\P$ and $\Q$:
\begin{align}   \label{eq:dist_identity}
\P_{\pistar_{2:m}, X^{2:m}|S=1,Y,X^1} = \Q_{\pistar_{2:m}, X^{2:m}|S=1,Y,X^1}.
\end{align}

With the above preparations, we now have the string of inequalities:
\begin{align*}
\TV(\P_X,\Q_X) &
\stackrel{\text{(a)}}{\leq}  \TV(\P_{XY\pistar_{2:m}},\Q_{XY\pistar_{2:m}})    \\
& \stackrel{\text{(b)}}{\leq}  \TV(\P_{XY\pistar_{2:m}|S=1},\Q_{XY\pistar_{2:m}|S=1}) + 2\epsilon_m \\
& \stackrel{\text{(c)}}{=}  \TV(\P_{X^1Y|S=1},\Q_{X^1Y|S=1}) + 2\epsilon_m  \\
& \stackrel{\text{(d)}}{\leq}  \TV(\P_{X^1Y|S=1},\P_{X^1Y}) + \TV(\P_{X^1Y},\P_{X^1}\otimes \P_{Y} )\\
& ~~~~  +   \TV(\P_{X^1}\otimes \P_{Y},\P_{X^1} \otimes \P_{Y|S=1}) + 2\epsilon_m  \\
& \stackrel{\text{(e)}}{\leq}  \TV(\P_{X^1Y},\P_{X^1}\otimes \P_{Y} ) + 4 \epsilon_m
\end{align*}
where (a) follows because including more variables cannot decrease variational distance, (b) follows by the triangle inequality of $\TV$ and two applications of \Cref{lemma:var_conditional}, (c) follows from \Cref{lemma:var_same_conditional} and \eqref{eq:dist_identity},~(d) follows from the triangle inequality for variational distance and the fact $\Q_{X^1Y|S=1} = \P_{X^1} \otimes \P_{Y|S=1}$, and (e) follows by applying \Cref{lemma:var_same_conditional} to get $ \TV(\P_{X^1Y|S=1},\P_{X^1Y}) \leq \epsilon_m$ and applying \Cref{lemma:var_same_conditional,lemma:var_conditional} to get:  $\TV(\P_{X^1}\otimes \P_{Y},\P_{X^1} \otimes \P_{Y|S=1})=  \TV(\P_{Y}, \P_{Y|S=1})\leq \epsilon_m$.
\end{proof}

\subsection{Application to the Gaussian model}  \label{sec:Gaussian_detection}

See Section \ref{sec:Gaussian_case_recovery} including the definition of $\rho_m.$   By the converse in~\cite{wu2023testing} for two graphs, weak detection based on observation of $(X^1,Y)$ is impossible if $n\rho_m^2 \leq (4-\epsilon')\log n$ for some $\epsilon' > 0$, or equivalently if
\[
    n\rho^2 \leq \frac{(4-\eps)}{m-1} \log n
\]
for some $\epsilon>0$. This yields the following corollary of the converse in
~\cite{wu2023testing} and \Cref{thm:detection_genie}.

\medskip 

\begin{corollary} \label{cor:Gaussian_detection_converse}
(Gaussian model, $m\geq 2$)
    Let $\epsilon >0$ denote a fixed constant. If
    \begin{align} \label{eq:Gaussian_converse_cond}
        \rho^2 \leq \frac{4-\epsilon}{m-1} \cdot \frac{\log n}{n},
    \end{align}
    then weak detection is impossible.
\end{corollary}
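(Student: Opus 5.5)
Prove the corollary by induction on $m$, using \Cref{thm:detection_genie} for the inductive step and the two-graph Gaussian converse of \cite{wu2023testing} in both the base case and the step. Fix $\epsilon>0$ and assume \eqref{eq:Gaussian_converse_cond} holds. The base case $m=2$ is exactly the converse of \cite{wu2023testing}: weak detection is impossible, i.e.\ $\TV(\P_X,\Q_X)=o(1)$, whenever $n\rho^2\le(4-\epsilon)\log n$.

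For the inductive step, let $m\ge 3$. The marginal law of $X^{2:m}$ under $\P$ is the $(m-1)$-graph Gaussian model with the same $\rho$, since $(\pistar_{23},\dots,\pistar_{2m})$ is again a uniform alignment. Its law under $\Q$ is the corresponding product null. The condition \eqref{eq:Gaussian_converse_cond} for $m$ implies the one for $m-1$, because
\[
\frac{4-\epsilon}{m-1}\le\frac{4-\epsilon}{m-2}.
\]
Hence the induction hypothesis gives $\epsilon_m=\TV(\P_{X^{2:m}},\Q_{X^{2:m}})=o(1)$.

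It remains to show that the two-graph term $\TV(\P_{X^1Y},\Q_{X^1Y})$ is $o(1)$.
\begin{itemize}
\item \emph{Law under $\P$.} By \Cref{sec:Gaussian_case_recovery}, given $\pistar_2$ the pairs $(X^1_e,Y_{\pistar_2(e)})$ are i.i.d.\ centered Gaussian with correlation $\rho_m$, and $\pistar_2$ is uniform on $\Sn$.
\item \emph{Law under $\Q$.} The variable $X^1$ is independent of $(X^{2:m},\pistar_{2:m})$, so $\Q_{X^1Y}=\P_{X^1}\otimes\P_Y$. The marginal $\P_Y$ has i.i.d.\ entries with variance $(m-1)+(m-1)(m-2)\rho$.
\end{itemize}
Rescale $Y$ by $\sqrt{\Var(Y_e)}$; this is a bijection, so it does not change the TV distance. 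The pair $(\P_{X^1Y},\Q_{X^1Y})$ then becomes exactly the two-graph Gaussian testing problem of \cite{wu2023testing} with correlation $\rho_m$.

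Condition \eqref{eq:Gaussian_converse_cond} forces $\rho\to0$, so
\[
n\rho_m^2=n\rho^2\,\frac{m-1}{(m-2)\rho+1}\le(4-\epsilon)(1+o(1))\log n\le(4-\epsilon/2)\log n
\]
for large $n$. The two-graph converse therefore yields $\TV(\P_{X^1Y},\Q_{X^1Y})=o(1)$. \Cref{thm:detection_genie} then gives $\TV(\P_X,\Q_X)\le o(1)+4\epsilon_m=o(1)$, which completes the induction.

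The main point to check carefully is that $\Q_{X^1Y}$ really is the product of the correct marginals. The marginal of $Y$ under $\Q$ must coincide with its marginal under $\P$. This holds because $\Q_{X^{2:m}\pistar_{2:m}}$ has the $X^{2:m}$-marginal $\Q_{X^{2:m}}$, whose law differs from $\P_{X^{2:m}}$. So $\Q_Y$ is not literally $\P_Y$. However, since $\pistar$ is a uniform permutation and each $X^k$ has i.i.d.\ $\mathcal N(0,1)$ entries under both $\P$ and $\Q$, I would instead invoke the form actually proved in step (e) of the theorem, $\TV(\P_{X^1Y},\P_{X^1}\otimes\P_Y)+4\epsilon_m$. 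The first term there is precisely the two-graph TV distance, which sidesteps the issue. A minor additional check is that the $o(1)$ in \cite{wu2023testing} holds uniformly enough to absorb the slack from $\epsilon$ to $\epsilon/2$, which is immediate since that converse is stated for any fixed $\epsilon'>0$.
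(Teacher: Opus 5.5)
Your proposal is correct and is essentially the paper's proof: induction on $m$ with the base case from \cite{wu2023testing}, the induction hypothesis giving $\epsilon_m=o(1)$, and the two-graph converse at correlation $\rho_m$ controlling the remaining term in \Cref{thm:detection_genie}. You are right that $\Q_Y$ need not equal $\P_Y$, so the earlier bullet asserting $\Q_{X^1Y}=\P_{X^1}\otimes\P_Y$ should be dropped; using the form $\TV(\P_{X^1Y},\P_{X^1}\otimes\P_Y)+4\epsilon_m$ from step (e) resolves this, and alternatively one has $\TV(\Q_Y,\P_Y)\le\epsilon_m$ by data processing, since $\Q_{\pistar_{2:m}|X^{2:m}}=\P_{\pistar_{2:m}|X^{2:m}}$.
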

\begin{proof}
We elaborate on Remark \ref{remark:on_TV_bnd}(a) to provide a proof of \Cref{cor:Gaussian_detection_converse} by induction on $m.$   Take the base case to be $m=2$. For $m=2$ the corollary reduces to the converse in ~\cite{wu2023testing}.  So for the sake of proof by induction suppose $m \geq 3$ and that \Cref{cor:Gaussian_detection_converse} is true for $m-1.$  Given $\epsilon > 0$ suppose that \eqref{eq:Gaussian_converse_cond} holds.
Then also
    \[
        \rho^2 \leq \frac{4-\epsilon}{(m-1)-1} \cdot \frac{\log n}{n},
    \]
so by the induction hypothesis, $\epsilon_m := \TV(\P_{X^{2:m}},\Q_{X^{2:m}})= o(1).$
Also, as noted above, \eqref{eq:Gaussian_converse_cond} implies that $\TV(\P_{X^1Y},Q_{X^1Y})=o(1).$ 
Applying \Cref{thm:detection_genie} completes the proof that $\TV(\P_X,\Q_X)=o(1)$ for $m,$  completing the proof by induction.
\end{proof}
This corollary is not tight.  It is shown in \cite{ameenhajekGaussian} that weak detection is impossible if $\rho^2 \leq \frac {8-\epsilon} m \cdot \frac{\log n} n.$
That is the sharp threshold -- \cite{ameenhajekGaussian} also shows that strong detection is possible if $\rho^2 \geq \frac {8+\epsilon} m \cdot \frac{\log n} n.$     Roughly speaking, we can conclude that in the Gaussian case, it is strictly more difficult to detect correlation among $m$ graphs than it is to detect correlation between the first graph and an aligned version of graphs 2 through $m$.

\subsection{Towards application to \erdosrenyi\ model}

Just as for the problem of alignment of multiple \erdosrenyi\ graphs described in \Cref{sec:towards_ER}, applying the last-matching bound for weak detection of correlation in such graphs involves converse bounds for two correlated \erdosrenyi\ graphs with asymmetry.   Therefore, the converse bounds for weak detection of correlation for multiple \erdosrenyi\ graphs are deferred to \Cref{sec:converses_multiple_ER}.

\section{Converses for asymmetric two-graph \erdosrenyi\ model}
\label{sec:converses_asymmetric}

Consider the \emph{asymmetric} two-graph \erdosrenyi\ model with parameters $p,s_1,s_2\in(0,1)$. Under $\P$, there is a uniformly random hidden alignment $\pistar\in\Sn$ and, conditioned on $\pistar$,
\[
        X^1_e=X^0_e\xi^1_e,
        ~~~~~~
        X^2_{\pistar(e)}=X^0_e\xi^2_e,
        ~~~~~~ e\in\En,
\]
where $X^0_e\sim\Bern(p)$, $\xi^1_e\sim\Bern(s_1)$, and
$\xi^2_e\sim\Bern(s_2)$ are mutually independent over $e\in\En$.

\subsection{Converses for weak detection for asymmetric two-graph model}

Consider the detection problem with observation $X$ such that under hypothesis $H_1$ the observation has the distribution $\P$ with parameters $n, p, s_1, s_2$ defined above and under hypothesis $H_0$ the observation has distribution $\Q$ under which  $X^1$ and $X^2$ are independent with the same marginal distributions as under $\P$. 

Let $\varrho(\lambda)$ denote the limiting density of the densest subgraph of an \erdosrenyi$(n,\lambda/n)$ graph, i.e.
\begin{align}   \label{eq:def_varrho}
    \varrho(\lambda) = \lim_{n\to\infty} \, \max_{\emptyset \neq U \subseteq [n]} \frac{|\mathcal{E}(U)|}{|U|} \, .
\end{align}
The following theorem collects the asymmetric weak-detection converses needed for the three regimes
\(
    p=n^{-\alpha+o(1)},
\)
with
\(
    \alpha=1, ~~ 0<\alpha<1
\)
and
\(
    \alpha=0
\)
respectively.

\medskip

\begin{theorem}[Impossibility of weak detection for two asymmetric Erdős--Rényi graphs]
\label{thm:asym_er_weak_detection_converses}
The following converse bounds hold for the asymmetric two-graph Erdős--Rényi detection problem.

\begin{enumerate}
    \item[$\mathrm{(i)}$] \thmcaseLabel{thm:asym_er_weak_detection_converses-i}
    \emph{(Sparse regime.)}
    Suppose
    \[
        s_{1}s_{2} \to 0
        ~~~~
        \mbox{and}
        ~~~~
        np s_1 s_2 \leq 1-\omega(n^{-1/3}) \, .
    \]
    Then
    \(
        \TV(\P,\Q) = o(1) ,
    \)
    i.e. weak detection is impossible.

    \item[$\mathrm{(ii)}$] \thmcaseLabel{thm:asym_er_weak_detection_converses-ii}
    \emph{(Moderately sparse regime.)}
    Suppose $\alpha \in (0,1)$, $p = n^{-\alpha + o(1)},$ and $\max\{s_1,s_2\} = 1-\Omega(1)$. Also suppose for a fixed $\epsilon>0$
    \[
        np s_1s_2\leq  \varrho^{-1}(1/\alpha) - \epsilon.
    \]
    Then
    \(
        \TV(\P,\Q) = o(1).
    \)

    \item[$\mathrm{(iii)}$] \thmcaseLabel{thm:asym_er_weak_detection_converses-iii}
    \emph{(Dense regime.)}
    Suppose
\(
        p\le 1-\Omega(1), p=n^{-o(1)},
        \max\{s_1, s_2\} \to 0,
\)
and
\(
        np^2s_i=\omega(1)
\)
for $i \in \{1,2\}$.
Also suppose for a fixed $\epsilon>0$
    \begin{align}
        np s_1s_2  \le \frac{ (2-\epsilon) \log n}{ \log \frac{1}{p} -1 + p }.
        \label{eq:lb-denseER}
    \end{align}
    Then
    \(
        \TV\left(\P,\Q\right) = o(1).
    \)
\end{enumerate}
\end{theorem}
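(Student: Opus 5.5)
We will prove all three parts by adapting the symmetric two-graph converses case by case. The common observation is that what the arguments actually use is the distribution of the \emph{intersection graph} $X^1_e\wedge X^2_{\pistar(e)}$, which is \erdosrenyi$(n,ps_1s_2)$. Each symmetric proof already enters through $s^2$ only via this quantity, together with the marginal edge densities $ps_1$ and $ps_2$. So in each proof we replace $s^2$ by $s_1s_2$ and check that the error terms, which previously involved $s$, are still controlled under the stated conditions on $s_1,s_2$. Throughout, we work with the likelihood ratio
\[
L(X)=\frac{\P(X)}{\Q(X)}=\Expect_{\pi}\Big[\prod_{e}\ell\big(X^1_e,X^2_{\pi(e)}\big)\Big],
\]
where $\ell$ is the per-edge ratio of the joint law of $(X^0\xi^1,X^0\xi^2)$ to the product of its marginals. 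We then bound a second moment, or a conditional second moment, of $L$ and use $\TV\le \tfrac12\sqrt{\Expect_\Q[L^2]-1}$, or its truncated variant.

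For (i), we follow \cite{wu2023testing} and \cite{feng2025strong}. The unconditional second moment is $\Expect_{\pi,\pi'}\prod_{O}(1+\rho^{|O|})$, with the product taken over the edge orbits $O$ of $\pi^{-1}\circ\pi'$. Here the per-edge correlation coefficient is
\[
\rho=\frac{ps_1s_2-p^2s_1s_2}{\sqrt{ps_1(1-ps_1)\,ps_2(1-ps_2)}}=\frac{s_1s_2(1-p)}{\sqrt{s_1s_2(1-ps_1)(1-ps_2)}}\approx\sqrt{s_1s_2}.
\]
The cycle-counting argument then depends only on $\rho$, and hence on $s_1s_2$. In the sparse regime we condition on the good event that the intersection graph has no dense subgraphs, that is, it is locally tree-like with at most one cycle per component. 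The critical condition then becomes that the \erdosrenyi$(n,nps_1s_2/n)$ intersection graph is subcritical, $nps_1s_2\le 1-\omega(n^{-1/3})$. The assumption $s_1s_2\to0$ replaces $s\to0$ wherever small-$s$ expansions were used.

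For (ii), we adapt \cite{ding2023detection}. We condition on the event that every subgraph $U$ of the intersection graph has $|\mathcal{E}(U)|/|U|$ below $\varrho(nps_1s_2)+\delta<1/\alpha$. We then bound the conditional second moment by summing over the possible intersection structures of $\pi$ and $\pi'$. The counting involves $p$ via $n^{-\alpha}$ and involves $s_1,s_2$ via the probability that a given edge survives in both graphs. The condition $\max\{s_1,s_2\}=1-\Omega(1)$ replaces $s=1-\Omega(1)$ and keeps the non-intersection factors bounded.

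For (iii), we adapt the dense argument of \cite{wu2023testing}, the $\log\frac1p-1+p$ constant coming from the KL or second-moment exponent. We truncate on the event that for every $\pi$ the overlap $|\pi(\mathcal E)\cap\mathcal E|$ of the intersection graph is typical. The exponent per fixed point becomes $nps_1s_2(\log\frac1p-1+p)/2$ versus $\log n$. The conditions $\max s_i\to0$ and $np^2s_i=\omega(1)$ are used for concentration of the marginal degrees.

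We expect the main obstacle to be (ii). The truncation event and the enumeration in \cite{ding2023detection} are intricate, so every occurrence of $s$ must be correctly tracked as $s_1$, $s_2$, or $s_1s_2$, particularly in bounds on edges present in only one graph. Our first step there will be to check that those single-graph factors factor out identically under $\P$ and $\Q$.
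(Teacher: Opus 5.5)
Your plan follows the paper's route. In each regime it runs a conditional second-moment argument with $s^2$ replaced by $t=s_1s_2$: conditioning on the intersection graph $H^\star\sim\ER(n,pt)$ being a pseudoforest in (i), Ding--Du admissibility in (ii), and in (iii) the Wu--Xu--Yu truncation of $e_A(S)$, $e_B(S)$, $e_{A\wedge B^{\pistar}}(S)$ over all vertex sets with $|S|\ge I_pn$, where $I_p=p(\log\frac1p-1+p)$ and $np^2s_i=\omega(1)$ serves the marginal edge-count concentration.

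Two corrections. First, the orbit factor in the second moment is $1+\rho^{2|O|}$ (with $\rho^2\le s_1s_2$), not $1+\rho^{|O|}$. Second, in (ii) the thing to verify is not that single-graph factors cancel between $\P$ and $\Q$. It is the bound $\Expect_{\P_{\pistar}}\big[\prod_{e\in O}\ell(X^1_e,X^2_{\pistar\circ\sigma(e)})\,\big|\,O\text{ not fully occupied}\big]\le 1$. In the asymmetric case this needs $(1-ps_1)(1-ps_2)\ge 1-p$, and that is exactly where $\max\{s_1,s_2\}=1-\Omega(1)$ enters, since together with $s_1s_2\to0$ it gives $s_1+s_2\le 1$.
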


\Cref{thm:asym_er_weak_detection_converses} is proved in Appendix \ref{app:detection_converses_asymmetric}.    The sparse-regime converse, \Cref{thm:asym_er_weak_detection_converses-i}, is proved using a combination of ideas from \cite{feng2025strong} and \cite{wu2023testing}. \Cref{thm:asym_er_weak_detection_converses-i} improves on the previous converses by including the asymmetric case and extending the range of $np$ (assumed to be constant in \cite{feng2025strong} and assumed to be $\omega(\log^2 n)$ in \cite{wu2023testing}). Also, \cite{wu2023testing} only considers impossibility of strong detection.
The moderately sparse regime converse, \Cref{thm:asym_er_weak_detection_converses-ii}, is proved by adapting the analysis of~\cite{ding2023detection}, where the sharp two-graph detection threshold is obtained in the symmetric case $s_1=s_2=s$. 
The dense-regime converse,\Cref{thm:asym_er_weak_detection_converses-iii}, is based on the symmetric case converse in~\cite{wu2023testing}.
The proofs all use the conditional second moment method on the likelihood ratio to show impossibility of detection; they differ on the conditioning event used.

\subsection{Converses for alignment for asymmetric two-graph model}

\subsubsection{Partial alignment for two \erdosrenyi\ graphs}

The following theorem collects the asymmetric alignment converses needed for the three regimes
\(
    p=n^{-\alpha+o(1)},
\)
with
\(
    \alpha=1, ~~ 0<\alpha<1
\)
and
\(
    \alpha=0
\)
respectively.

%Ding and Du~\cite{ding2023densesubgraph} proved that, for the symmetric two-graph model, the threshold for partial alignment in the regime $p=n^{-\alpha+o(1)}$ is whether $nps^2$ is greater than or less than  $\lambda_\alpha$, where $\lambda=nps^2$. For our application we need the following asymmetric versions of partial-alignment converses.

\medskip

\begin{theorem}[Asymmetric two-graph partial-alignment converses]
\label{thm:asym_dd_recovery}
%Let \(t:=s_1s_2\).  
The following converse bounds hold for the asymmetric two-graph \erdosrenyi\ alignment problem.

\begin{enumerate}[label=\textup{(\roman*)}]
\item \thmcaseLabel{thm:asym_dd_recovery-i}
\textup{(Sparse regime.)} 
Suppose \(p=n^{-\Omega(1)}\), \(\max\{s_1, s_2\} \to0\), \(np=\omega(\log^2 n)\), and for a fixed \(\epsilon>0\),
\[
        nps_1s_2 \le 1-\epsilon .
\]
Then partial alignment is intractable.

\item \thmcaseLabel{thm:asym_dd_recovery-ii}
\textup{(Moderately sparse regime.)}
Suppose $\alpha \in (0,1)$, $p = n^{-\alpha + o(1)},$ and $\max\{s_1,s_2\}\to 0.$ Also suppose for a fixed $\epsilon>0$
    \[
        np s_1s_2\leq  \varrho^{-1}(1/\alpha) - \epsilon.
    \]
Then partial alignment is intractable.

\item \thmcaseLabel{thm:asym_dd_recovery-iii}
\textup{(Dense regime.)}
Suppose
\(
        p\le 1-\Omega(1), p=n^{-o(1)},
        \max\{s_1, s_2\} \to 0,
\)
and
\(
        np^2s_i=\omega(1)
\)
for $i \in \{1,2\}$.
Also suppose for a fixed $\epsilon>0$
\[
np s_1s_2  \le \frac{ (2-\epsilon) \log n}{ \log \frac{1}{p} -1 + p }. 
\]
Then partial alignment is intractable.
\end{enumerate}
\end{theorem}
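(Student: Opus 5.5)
The plan is to adapt the known symmetric converses to the asymmetric case. In each of them, the quantity that governs the posterior of $\pistar$ is the intersection graph $X^1\wedge X^2_{\pistar}$, which is \erdosrenyi$(n,ps_1s_2)$, so $s^2$ should be replaceable by $s_1s_2$ throughout. For (i) and (iii) I would use the mutual-information/MMSE area method of \cite{wu2022settling}. For (ii) I would follow the densest-subgraph argument of \cite{ding2023densesubgraph}.

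The common reduction goes through the overlap. Intractability of partial alignment follows once we show $\Expect[\ov(\pistar,\hat\pi)]=o(1)$ for the posterior-sampling / MMSE estimator. By the standard argument of \cite{wu2022settling}, this in turn follows from $\Expect_{\pi,\pi'\sim\text{posterior}}[\ov(\pi,\pi')]=o(1)$. The edge-overlap version is controlled via the area theorem: with an interpolation parameter along the channel (e.g. varying $s_2$ from $0$ to its value while keeping $ps_1$ fixed), $\frac{d}{ds_2}I(X^1,X^2;\pistar)$ is related to the edge-wise MMSE of $X^1_e$ given $(X^2,\text{alignment})$. Hence it suffices to show that $I(\pistar;X^1,X^2)=(1-o(1))\,\binom n2\, d(\cdot)$, i.e. the mutual information nearly equals its maximal possible value $\binom{n}{2}I_{\text{edge}}$. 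That gives MMSE near the trivial value, and so negligible edge overlap, which implies negligible vertex overlap when $np\to\infty$.

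The upper bound $I(\pistar;X)\le H(\pistar)\approx n\log n$ is trivial, so the work is a matching lower bound on $I(X^1;X^2)$ through $\binom n2 I_{\text{edge}}-I(X^1;X^2)$. I would bound this by the KL divergence $D(\P\|\Q)$, controlled via a (truncated) second moment $\Expect_\Q[L^2]$ computed through the cycle decomposition of $\pi^{-1}\pi'$. The asymmetric edge-level quantity is $\sum_{a,b}\P(a,b)^2/(\Q_1(a)\Q_2(b))-1=\frac{(ps_1s_2)^2(1-p)^2\cdots}{\cdots}$. I expect the $\ell$-cycle contributions to depend only on $\rho_{\text{eff}}=\frac{s_1s_2(1-p)^2}{(1-ps_1)(1-ps_2)}$ and the product $nps_1s_2$. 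In (i) the cycle series converges when $nps_1s_2\le1-\epsilon$, using $np=\omega(\log^2n)$ to truncate short cycles as in \cite{wu2022settling}. In (iii) the large-deviation rate $\log\frac1p-1+p$ appears in the fixed-point count, and the conditions $np^2s_i=\omega(1)$ are used exactly as in the symmetric proof.

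For (ii), I would use the Ding--Du approach. Condition on the intersection graph restricted to the event $\mathcal{G}$ that no subgraph of the parent intersection graph has density exceeding $\varrho(nps_1s_2)+\delta<1/\alpha$. This event has probability $1-o(1)$ because the intersection graph is \erdosrenyi$(n,ps_1s_2)$ with $nps_1s_2\le\lambda_\alpha-\epsilon$. Then I would show that the posterior is spread: for any fixed $\hat\pi$, the count of permutations agreeing with $\pistar$ on $\ge\epsilon n$ vertices that have comparable posterior weight dominates. This uses their enumeration of subgraph-consistent alternatives, which again depends on $(s_1,s_2)$ only through $s_1s_2$ and through $\max s_i\to0$, the latter making the marginal-graph corrections $ps_i$ negligible.

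The main obstacle will be verifying, in the truncated second moment for (i) and (iii), that asymmetric edge distributions do not create cross terms in the cycle-weight computation. With $s_1\ne s_2$ the $2\times2$ kernel is no longer symmetric, so its eigenvalue for an orbit of length $\ell$ becomes the trace of a product of $\ell$ non-symmetric matrices. I would first check that the kernel $P(a,b)/\sqrt{Q_1(a)Q_2(b)}$ has singular values $1$ and $\rho_{\text{eff}}^{1/2}$-type, with the cycle trace equal to $1+\rho_{\text{eff}}^\ell$. This holds via an SVD with left and right singular vectors chosen for $\Q_1,\Q_2$ respectively, which reduces everything to the symmetric computation with $s^2\mapsto s_1s_2$.
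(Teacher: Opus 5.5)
Your route is the paper's: Ding--Du for (ii), and for (i) and (iii) the Wu--Xu--Yu area theorem with $D(\P\Vert\Q)$ controlled by a truncated second moment. Your orbit trace $1+\rho_{\rm e}^{2\ell}$ is exactly \Cref{lem:asym_orbit_calculation}. But the quantitative reduction you state for (i) and (iii) is wrong: showing $I(\pistar;X^1,X^2)=(1-o(1))NI_{\rm e}$ does not suffice.

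\begin{itemize}
\item In \Cref{lem:asym_recovery_area} the entire weighted MMSE budget is $\int_r^s\frac{\theta-r}{s(1-q)^2}Nq(1-q)\,\dd\theta=\Theta(Nps_1s_2)$.
\item In contrast, $NI_{\rm e}\approx Nps_1s_2(\log\frac1p-1+p)$, which in the sparse regime is larger by a $\Theta(\log n)$ factor.
\item So a deficit $NI_{\rm e}-I=o(NI_{\rm e})$ makes the area inequality vacuous.
\end{itemize}
What you need is the additive bound $D(\P\Vert\Q)=NI_{\rm e}-I(\pistar;X^1,X^2)=o(Nps_1s_2)$, cf.\ \eqref{eq:asym_common_mmse_lower}. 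The paper proves $O(\log n)$ in the sparse case and $o(1)$ in the dense case. Passing from a truncated $\chi^2$ bound to this KL bound goes through \Cref{lem:asym_conditional_chi2_to_kl}, which charges $\P(\calE^c)\,NI_{\rm e}$. Hence the truncation event must fail with probability $o(1/\log(1/p))$, not merely $o(1)$ as in detection. This is why (i) assumes a fixed $\epsilon$: the pseudoforest event then fails with probability $O(1/n)$ by \Cref{lem:asym_er_pseudoforest}. Also, the untruncated second moment blows up because of permutations with many fixed points (the factor $(1+\rho_{\rm e}^2)^{\binom{n_1}{2}}$). So the truncation must be on the intersection graph (the pseudoforest event in (i), the edge-count events $\calE_S$ in (iii)), not on short cycles.

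Second, the edge-to-vertex overlap step does not follow from $np\to\infty$. In \Cref{lem:asym_recovery_mmse_overlap} the term $(n\log n/(Nq))^{1/4}$ requires the graph whose edges you estimate to have average degree $nq=\omega(\log n)$. With $s_1\neq s_2$ this forces two things:
\begin{itemize}
\item you must estimate the denser graph (WLOG $s_1\ge s_2$);
\item you must first raise $(s_1,s_2)$, by monotonicity under thinning, to the boundary $nps_1s_2=1-\epsilon/2$ via $\bar s_1=\max\{s_1,\sqrt{T_n}\}$, $\bar s_2=T_n/\bar s_1$, so that $nps_1\ge\sqrt{np\cdot nps_1s_2}=\omega(\log n)$.
\end{itemize}
That is where $np=\omega(\log^2 n)$ is used. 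Keeping $ps_1$ fixed and estimating $X^1$, as you propose, breaks down e.g.\ when $s_1\ll s_2$ or when $nps_1s_2\to0$.

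Third, the area theorem controls only an integral of the MMSE deficiency along the path. To get a pointwise bound at the true parameters you must place them strictly inside the path and use monotonicity of the MMSE. The paper does this by inflating to $p'=(1-\delta)p$, $s_i'=s_i/(1-\delta)$, which keeps the marginals and stays within the hypotheses thanks to the $\epsilon$-slack.

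Finally, for (ii), the Ding--Du recovery argument also consumes $\TV(\P,\Q)\to0$, which must come from the asymmetric detection converse \Cref{thm:asym_er_weak_detection_converses-ii}. It also needs the conditional orbit bound \eqref{eq:asym_conditional_orbit} of \Cref{lem:asym_dd_cond_orbit}.
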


\medskip

\Cref{thm:asym_dd_recovery} is proved in Appendix \ref{app:converses_asym_recovery}.
The converse and proof for the moderately sparse regime, \Cref{thm:asym_dd_recovery-ii}, are based on the converse and proof for the symmetric case $s_1=s_2$ in  \cite{ding2023densesubgraph}.   The converses and proofs for the sparse and dense regimes,
\Cref{thm:asym_dd_recovery-i,thm:asym_dd_recovery-iii}, are based on the converses and proofs in the symmetric case from \cite{wu2022settling}. The proof in~\cite{wu2022settling} uses a relationship between the mutual information and a corresponding minimum mean square error, whereas the approach in~\cite{ding2023densesubgraph} directly shows that the posterior distribution is spread out in a way that precludes partial alignment. Interestingly, both approaches rely on the impossibility of weak detection for the corresponding regimes in \Cref{thm:asym_er_weak_detection_converses}.

%Note that in the symmetric case with $s_1 = s_2 = s$, the condition $s\to 0$ is implied by the conditions $p = n^{-\alpha + o(1)}$ and $n p s^2 \leq \lambda_{\alpha} - \epsilon$. The above asymmetric version requires both $s_1 \to 0$ and $s_2 \to 0$.

\subsubsection{Almost-exact alignment for two \erdosrenyi\ graphs}

The following is an immediate consequence of \cite[Theorem 2]{cullina2019kcore}. 
\medskip 

\begin{theorem}[Asymmetric two-graph almost-exact alignment converse]
    \label{thm:almost_exact_recovery}
    If $nps_1s_2 = O(1)$ then almost-exact alignment is impossible.
\end{theorem}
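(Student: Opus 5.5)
The plan is to deduce \Cref{thm:almost_exact_recovery} from \cite[Theorem 2]{cullina2019kcore}, which describes the optimal matching for two correlated \erdosrenyi\ graphs through the intersection graph. Recall the structure of that result. Under $\P$ the intersection graph with incidence matrix $X^1_e\wedge X^2_{\pistar(e)}$ is an \erdosrenyi$(n,ps_1s_2)$ graph. Any vertex that is isolated in this intersection graph carries no edge evidence linking its two copies. Specifically, if two vertices $i,j$ are both isolated in the intersection graph, consider the alternative alignment that swaps their images under $\pistar$. The paired graphs have the same posterior likelihood under both alignments. This follows by checking the edgewise joint law $(X^0\xi^1,X^0\xi^2)$: the likelihood depends only on the counts of edges in $X^1$, in $X^2$, and in the intersection. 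The first two counts are fixed, and the intersection count is unchanged by the swap, since neither vertex has an intersection edge before or after. The asymmetry $s_1\neq s_2$ enters only through the constants in the edgewise probabilities $p s_1 s_2$, $ps_1(1-s_2)$, and so on, so the exchangeability argument from \cite{cullina2019kcore} carries over verbatim.

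The key steps are as follows.

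\textbf{Step 1.} Show that under $\P$ the posterior of $\pistar$ given $X$ is invariant under composing $\pistar$ with any permutation of the set $I$ of intersection-isolated vertices. This is the likelihood computation above.

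\textbf{Step 2.} Compute the expected number of isolated vertices. It is $n(1-ps_1s_2)^{n-1}\ge n e^{-c}(1-o(1))$ when $nps_1s_2\le c=O(1)$. A second-moment or Poisson-type concentration argument shows that $|I|\ge \delta n$ with probability $1-o(1)$ for some constant $\delta>0$.

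\textbf{Step 3.} By Step 1, conditioned on $X$ and $I$, the restriction of $\pistar$ to $I$ is uniform over the bijections consistent with the posterior. So for any estimator, the expected number of correctly matched vertices in $I$ is at most $1$ (the expected number of fixed points of a uniform permutation). Hence $\ov(\pistar,\hat\pi)\le 1-\delta+o(1)$ with probability $1-o(1)$, which rules out almost-exact alignment with $\epsilon<\delta$.

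The main obstacle is Step 1. The set $I$ is defined through $\pistar$, so it is not directly observable, and the uniformity claim has to be stated carefully. I would handle this as in \cite{cullina2019kcore}. Fix the observed graphs. For any candidate $\pi$, the alternative $\pi\circ\sigma$, with $\sigma$ permuting the vertices isolated in the intersection under $\pi$, produces the same intersection-isolated set and the same likelihood. This gives orbits of equal posterior mass of size $|I|!$, and a correct match on $I$ occurs in only a vanishing fraction of each orbit. Since \cite[Theorem 2]{cullina2019kcore} already covers general subsampling probabilities, in the write-up I would simply note that its hypotheses hold with parameters $(p,s_1,s_2)$ and that $nps_1s_2=O(1)$ is exactly its impossibility condition.
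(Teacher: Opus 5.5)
Your fallback of invoking \cite[Theorem~2]{cullina2019kcore} with parameters $(p,s_1,s_2)$ is exactly the paper's proof, which consists of that citation alone. The self-contained argument you sketch, however, rests on a false claim in Step~1, and Step~3 inherits it.

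Write $n_{11}(\pi)=|\{e: X^1_e=X^2_{\pi(e)}=1\}|$, and let $I_\pi$ be the set of vertices isolated in this intersection graph. Take $i,j\in I_\pi$ and $\pi'=\pi\circ(i\,j)$. The edge $\{i,k\}$ is now paired with $X^2_{\pi(j)\pi(k)}$, and $X^1_{ik}X^2_{\pi(j)\pi(k)}$ can equal $1$ even though $X^1_{ik}X^2_{\pi(i)\pi(k)}=0$. Isolation before the swap says nothing about these cross terms. Under $\P$ with $\pi=\pistar$, they are (up to the mild conditioning on isolation) independent $\Bern(ps_1)$ and $\Bern(ps_2)$ variables. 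So a typical permutation of $I$ creates on the order of $|I|\,np^2s_1s_2$ new intersection edges. This has several consequences:
\begin{itemize}
\item the likelihood is not invariant, and it can strictly increase;
\item $I_{\pi\circ\sigma}\neq I_\pi$ in general;
\item the relation ``$\pi'=\pi\circ\sigma$ with $\sigma$ supported on $I_\pi$'' does not partition $\Sn$ into classes of size $|I|!$ with equal posterior mass;
\item the restriction of $\pistar$ to $I$ is not uniform given $X$, so the bound of one expected correct match on $I$ is unjustified.
\end{itemize}

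What does survive is a one-sided inequality, and it suffices. Suppose $\sigma$ fixes every vertex outside $I_\pi$. Edges avoiding $I_\pi$ are unchanged, while edges touching $I_\pi$ contribute $0$ under $\pi$, so $n_{11}(\pi\circ\sigma)\ge n_{11}(\pi)$. The posterior is proportional to $\big(p_{11}p_{00}/(p_{10}p_{01})\big)^{n_{11}(\pi)}$. Moreover $p_{11}p_{00}-p_{10}p_{01}=ps_1s_2(1-p)\ge 0$, a sign check your sketch omits. Hence $\P(\pistar=\pi\mid X)\le \P(\pistar=\pi\circ\sigma\mid X)$ for all $|I_\pi|!$ such $\sigma$, which gives $\P(\pistar=\pi\mid X)\le 1/|I_\pi|!$.

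Replace the uniformity claim by counting. At most $n!/(n-\lfloor\epsilon n\rfloor)!\le n^{\epsilon n}$ permutations satisfy $\ov(\pi,\hat\pi)\ge 1-\epsilon$, so
\[
\P\big(\ov(\pistar,\hat\pi)\ge 1-\epsilon,\ |I_{\pistar}|\ge\delta n \,\big|\, X\big)\le \frac{n^{\epsilon n}}{\lceil\delta n\rceil!}=\exp\big(-(\delta-\epsilon)\,n\log n+O(n)\big)\to 0
\]
for any fixed $\epsilon<\delta$. Together with your Step~2 (which is fine), this yields the theorem. With this repair your argument becomes a genuine self-contained proof of what the paper obtains by citation.
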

A simple explanation of \Cref{thm:almost_exact_recovery} is that the condition implies that the number of isolated vertices in the intersection graph $(X_e^1\wedge X^2_{\pistar(e)})_{e\in \calE}$ grows at most linearly in $n.$   If $nps_1s_2 \to \infty$ then under mild additional assumptions (e.g. $p\leq n^{-\Omega(1)}$ and $ps_1s_2 \leq \frac 1 {8e^3}$) almost-exact alignment is possible \cite{cullina2019kcore}.

\subsubsection{Exact alignment for two \erdosrenyi\ graphs}

\begin{theorem}[Asymmetric two-graph exact-alignment converse]
\label{thm:asym_exact_recovery}
Suppose that $p\max\{s_1,s_2\}=o(1)$ and let $\epsilon > 0.$   If
 \[
        s_1s_2 p(1-\sqrt{p})^2 \geq (1+\epsilon)\frac{\log n} n
\]
then exact alignment is possible and if
 \[
        s_1s_2 p(1-\sqrt{p})^2 \leq (1 - \epsilon)\frac{\log n} n
\]
then exact alignment is impossible.
\end{theorem}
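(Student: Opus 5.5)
The plan is to reduce the asymmetric statement to the known exact-alignment analysis for correlated \erdosrenyi\ graphs, namely the general-correlation framework of Cullina and Kiyavash \cite{cullina2017exact}. That work treats a pair of graphs whose aligned edge pairs $(X^1_e,X^2_{\pistar(e)})$ are i.i.d.\ with an arbitrary joint distribution $(p_{11},p_{10},p_{01},p_{00})$. Here
\[
p_{11}=ps_1s_2,\quad p_{10}=ps_1(1-s_2),\quad p_{01}=ps_2(1-s_1),\quad p_{00}=1-p_{11}-p_{10}-p_{01}.
\]
In the sparse regime their threshold is governed by the quantity $\big(\sqrt{p_{11}p_{00}}-\sqrt{p_{10}p_{01}}\big)^2$ compared with $\frac{\log n}{n}$. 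Its bulk behavior is captured by the R\'enyi-$1/2$ (Bhattacharyya) divergence between the joint law and the product of marginals.

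The first step is to compute this quantity. We have $p_{10}p_{01}=p^2s_1s_2(1-s_1)(1-s_2)$ and $p_{00}=1-O(p\max\{s_1,s_2\})$. Hence
\[
\sqrt{p_{11}p_{00}}-\sqrt{p_{10}p_{01}}=\sqrt{ps_1s_2}\,\Big(\sqrt{p_{00}}-\sqrt{p(1-s_1)(1-s_2)}\Big).
\]
Under the assumption $p\max\{s_1,s_2\}=o(1)$ we have $p_{00}=1-o(1)$. We also need $(1-s_1)(1-s_2)$ to be close to $1$. If the $s_i$ are not small, $p$ itself must be $o(1)$, and then the factor $\sqrt p$ kills any discrepancy, so $\sqrt{p(1-s_1)(1-s_2)}=\sqrt p+o(1)$ in every case. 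This gives $(\sqrt{p_{11}p_{00}}-\sqrt{p_{10}p_{01}})^2=(1+o(1))\,s_1s_2p(1-\sqrt p)^2$. Absorbing the $1+o(1)$ into $\epsilon$ then matches both stated thresholds. When $p$ is bounded away from $0$ the factor $(1-\sqrt p)^2$ is a genuine constant and must be kept exactly, which is why it is done this way rather than bounding crudely.

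The second step is the converse. The cleanest route is the isolated-vertex/automorphism argument in the intersection-graph style, generalized to the correlated setting. Impossibility follows if, with high probability, there exist two vertices $i,j$ for which the transposition $(i\,j)\circ\pistar$ has posterior likelihood at least that of $\pistar$. The log-likelihood ratio of swapping $i,j$ is a sum over the roughly $2n$ edges incident to $i$ and $j$. The probability that it is nonnegative is about $\exp\!\big(-n(\sqrt{p_{11}p_{00}}-\sqrt{p_{10}p_{01}})^2(1+o(1))\big)$, by a Chernoff bound at $\lambda=1/2$ together with a matching lower bound. Under the hypothesis this is at least $n^{-(1-\epsilon)(1+o(1))}$, so the expected number of such "bad" vertices is $n^{\epsilon-o(1)}\to\infty$. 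A second-moment or Poissonization argument over roughly disjoint vertex sets, as in \cite{cullina2017exact}, then shows that at least one bad pair exists with high probability. In that event the MAP estimator errs with probability at least $1/2$.

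The third step is achievability. Take the MAP estimator, equivalently maximizing the weighted overlap $\sum_e \log\frac{p_{11}p_{00}}{p_{10}p_{01}}X^1_eX^2_{\pi(e)}$. Union-bound over permutations via the cycle decomposition, using the generating-function bound of \cite{cullina2017exact}. A permutation moving $k$ vertices contributes at most $n^{k}\exp\!\big(-kn(\sqrt{p_{11}p_{00}}-\sqrt{p_{10}p_{01}})^2(1-o(1))\big)$. Summing over $k$ gives $o(1)$ when the quantity exceeds $(1+\epsilon)\frac{\log n}n$.

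The main obstacle is checking that the cited general-distribution results apply uniformly in the asymmetric, possibly dense-parent regime. In particular, their error terms require $p_{11},p_{10},p_{01}=o(1)$, which is exactly what $p\max\{s_1,s_2\}=o(1)$ guarantees. If necessary, I would redo the Chernoff exponent directly for the $\lambda=1/2$ tilt to confirm the $(1-\sqrt p)^2$ factor.
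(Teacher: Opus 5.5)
Your Step 1 is the paper's entire proof. You substitute $p_{11}=ps_1s_2$, $p_{10}=ps_1(1-s_2)$, $p_{01}=ps_2(1-s_1)$ into the general two-graph exact-recovery threshold $\beta:=(\sqrt{p_{00}p_{11}}-\sqrt{p_{01}p_{10}})^2\sim\frac{\log n}{n}$ and check that $\beta\sim s_1s_2p(1-\sqrt p)^2$. The difference is the black box. The paper takes this sharp general-$p_{ij}$ threshold, in both directions, from \cite{wu2022settling}, which it describes as improving on \cite{cullina2017exact}; citing that makes Steps 2--3 unnecessary.

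Step 1 itself needs one repair. From $\sqrt{p_{00}}=1-o(1)$ and $\sqrt{p(1-s_1)(1-s_2)}=\sqrt p+o(1)$ you only get $1-\sqrt p+o(1)$. That additive error is not $(1+o(1))(1-\sqrt p)$ when $p\to1$, which the hypotheses allow (with $s_i\to0$). Use the exact identity $p_{00}-p(1-s_1)(1-s_2)=1-p$ instead. It gives $\sqrt{p_{00}}-\sqrt{p(1-s_1)(1-s_2)}=\frac{1-p}{1+\sqrt p+o(1)}=(1+o(1))(1-\sqrt p)$ uniformly in $p$.

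If Steps 2--3 are meant as self-contained proofs, both contain errors.

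\emph{Converse.} The transposition $(i\,j)$ moves $2(n-2)$ edges, forming $n-2$ edge $2$-cycles. The $\lambda=1/2$ moment of each $2$-cycle's log-likelihood increment is $\mathrm{tr}(K^2)=1-2\beta$, where $K_{bb'}=\sum_a\sqrt{p_{ab}p_{ab'}}$ has trace $1$ and determinant $\beta$. So the probability is $e^{-2n\beta(1+o(1))}$, not $e^{-n\beta}$. The first moment must then be taken over the $\binom n2$ pairs, giving $n^{2\epsilon-o(1)}$. The conclusion survives, but your per-vertex count only makes sense in the sparse regime, where the event essentially means that $i$ and $j$ are both isolated in the intersection graph.

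\emph{Achievability.} The bound $n^k e^{-kn\beta(1-o(1))}$ is false for $k=\Theta(n)$. A permutation moving $k$ vertices moves only $kn-\binom{k+1}{2}-n_2$ edges, where $n_2$ is its number of $2$-cycles. Moreover, $\lambda=1/2$ is already the optimal Chernoff parameter for each fixed permutation, since the exponent is symmetric under $\lambda\leftrightarrow1-\lambda$. Summing these per-permutation bounds over the roughly $n!/e$ derangements therefore gives at least $\exp\{(\frac{1-\epsilon}{2}+o(1))\,n\log n\}\to\infty$ when $n\beta=(1+\epsilon)\log n$ with $\epsilon<1$. The loss comes from paying separately, for each permutation, for what is largely a common downward fluctuation of the true intersection graph. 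Large $k$ thus needs more than a plain union bound, and this is part of what you get by citing \cite{wu2022settling}.
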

\begin{proof}
\Cref{thm:asym_exact_recovery} follows directly from \cite{wu2022settling}, which improves on earlier results of \cite{cullina2017exact}.  Indeed,  \cite{wu2022settling} establishes that with $p_{ij} := \P(X^1_e = i, X^2_{\pi(e)} =j),$  the critical threshold for exact alignment is 
$(\sqrt{p_{00}p_{11}} - \sqrt{p_{01}p_{10}} )^2 \sim {\log (n)}/ n.$
Substituting
\[
p_{00} = 1-p + p(1-s_1)(1-s_2),
~~~~
p_{01} = p(1-s_1)s_2,
~~~~
p_{10} = ps_1(1-s_2), 
~~~~
p_{11} = ps_1s_2
\]
yields
\begin{align*}
    &(\sqrt{p_{00}p_{11}} - \sqrt{p_{01}p_{10}} )^2  \\
    & ~~~~ 
    = ps_1s_2\left[
    1 + p(1-2s_1 - 2s_2 + 2s_1s_2) -2\sqrt{p(1-s_1)(1-s_2)(1-ps_1-ps_2+ps_1s_2)}
    \right] \, .
\end{align*}
The assumption $p\max\{s_1,s_2\}=o(1)$ implies that either $p\to 0$ or $\max\{s_1,s_2\} \to 0.$  In either case, $(\sqrt{p_{00}p_{11}} -  \sqrt{p_{01}p_{10}} )^2 \sim s_1s_2 p(1-\sqrt{p})^2,$ implying the theorem.
\end{proof}

\section{Converses for multiple correlated \erdosrenyi\ graphs}
\label{sec:ER_corollaries}

\subsection{Converses for detection of multiple correlated \erdosrenyi\ graphs}
\label{sec:converses_multiple_ER}

In this section we combine \Cref{thm:detection_genie,thm:asym_er_weak_detection_converses} and proof by induction on $m$ as in the proof of \Cref{cor:Gaussian_detection_converse} to yield converses for weak detection of correlation among multiple graphs for the \erdosrenyi\ model. This provides converses for the $m$-graph model with parameters $n,m,p,s$
by applying the converses for the asymmetric two graph model with parameters $n,p,s_1=s,s_2=1-(1-s)^{m-1}.$  The two-graph converses in Theorems 5.1 and 5.2 only cover regimes such that $\max\{s_1,s_2\}\to 0$ or $s_1s_2\to 0$ so the corresponding converses entail $s\to 0.$   In that case, $s_2 \sim (m-1)s.$  

Those two theorems for the dense regime include the assumption $np^2s_i=\omega(1).$   As noted in the proof in Appendix B.3, without loss of generality, it can be assumed that the threshold condition holds with equality, so we can assume in the dense regime for multiple graphs that~\eqref{eq:dense_nec} below holds with equality.   Then $s=n^{-1/2 + o(1)}$ in which case the assumptions $\max\{s_1,s_2\}\to 0$ and
$np^2s_i=\omega(1)$ for $i=1,2$ are satisfied and need not be imposed in the corollary. 
\medskip 

\begin{corollary}[Impossibility of weak detection for multiple ER graphs]  
\label{cor:asym_er_weak_detection_converses}
The following converse bounds hold for the detection problem with $m$ Erdős--Rényi graphs.

\begin{enumerate}
    \item[$\mathrm{(i)}$] \thmcaseLabel{cor:asym_er_weak_detection_converses-i}
    \emph{(Sparse regime.)}
    Suppose
    \[
        s \to 0
        ~~~~
        \mbox{and}
        ~~~~
        np s^2 \leq \frac{1-\omega(n^{-1/3})}{m-1} \, .
    \]
    Then
    \(
        \TV(\P,\Q) = o(1) ,
    \)
    i.e. weak detection is impossible. 

    \item[$\mathrm{(ii)}$] \thmcaseLabel{cor:asym_er_weak_detection_converses-ii}
    \emph{(Moderately sparse regime.)}
    Suppose $\alpha \in (0,1)$, $p = n^{-\alpha + o(1)}$ and for some fixed $\epsilon>0$,
    \begin{align*}  
        np s^2 \leq \frac{ \varrho^{-1}(1/\alpha) - \epsilon }{m-1}.
    \end{align*}
    Then
    \(
        \TV(\P,\Q) = o(1).
    \)

    \item[$\mathrm{(iii)}$] \thmcaseLabel{cor:asym_er_weak_detection_converses-iii}
    \emph{(Dense regime.)}
    Suppose  $p \le 1- \Omega(1),$  $p=n^{-o(1)},$ and for some fixed $\epsilon>0$,
    \begin{align}  \label{eq:dense_nec}
        np s^2  \le \frac{ (2-\epsilon) \log n}{(m-1)\left( \log \frac{1}{p} -1 + p \right)}. 
    \end{align}
    Then
    \(
        \TV\left(\P,\Q\right) = o(1).
    \)
\end{enumerate}
\end{corollary}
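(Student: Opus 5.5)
The plan is induction on $m$, following the proof of \Cref{cor:Gaussian_detection_converse}. The base case $m=2$ is \Cref{thm:asym_er_weak_detection_converses} with $s_1=s_2=s$, so $s_1s_2=s^2$. The conditions of the corollary imply the conditions of that theorem in each regime:
\begin{itemize}
\item in (i), $s\to0$ gives $s^2\to 0$;
\item in (ii), $s\to 0$ follows from $p=n^{-\alpha+o(1)}$ and $nps^2=O(1)$, since $s^2=O(n^{-1+\alpha+o(1)})$;
\item in (iii), we may assume as noted that \eqref{eq:dense_nec} holds with equality, so $s=n^{-1/2+o(1)}$, and then $\max s_i\to0$ and $np^2s=\omega(1)$ because $p=n^{-o(1)}$.
\end{itemize}

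For the induction step, fix $m\ge3$ and assume the corollary holds for $m-1$ graphs. The hypothesis for $m$ implies the hypothesis for $m-1$, since $\frac{c}{m-1}\le\frac{c}{m-2}$ and the regime side conditions are unchanged. The marginal law of $X^{2:m}$ under $\P$ and $\Q$ is exactly the $(m-1)$-graph model with the same $n,p,s$, so the induction hypothesis gives $\epsilon_m=\TV(\P_{X^{2:m}},\Q_{X^{2:m}})=o(1)$. By \Cref{thm:detection_genie}, it then suffices to show $\TV(\P_{X^1Y},\Q_{X^1Y})=o(1)$.

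Under $\Q$, $X^1$ is independent of $Y$, and $Y$ has the same law as under $\P$. So $\Q_{X^1Y}=\P_{X^1}\otimes\P_Y$, and the pair $(X^1,Y)$ is exactly an instance of the asymmetric two-graph detection problem. By \Cref{sec:towards_ER}, its parameters are $p$, $s_1=s$ and $s_2=s'=1-(1-s)^{m-1}$.

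The remaining work, and the main technical point, is to check that the hypotheses of \Cref{thm:asym_er_weak_detection_converses} hold for these parameters.
\begin{itemize}
\item In all three regimes $s\to0$: in (i) by assumption, and in (ii) and (iii) as argued above. Hence $s'=(m-1)s(1+O(s))$ and $s_1s_2=(m-1)s^2(1+O(s))$.
\item Regime (i): we need $nps_1s_2\le 1-\omega(n^{-1/3})$. The factor $(1+O(s))$ could spoil this. I would handle it using the bound $1-(1-s)^{m-1}\le (m-1)s$, so that $s_1s_2\le (m-1)s^2$ exactly. Then $nps_1s_2\le (m-1)nps^2\le 1-\omega(n^{-1/3})$ with no loss. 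We also need $s_1s_2\to 0$, which holds.
\item Regime (ii): the same inequality gives $nps_1s_2\le\varrho^{-1}(1/\alpha)-\epsilon$. Also $\max\{s_1,s_2\}\le (m-1)s\to0$, so $\max\{s_1,s_2\}=1-\Omega(1)$.
\item Regime (iii): the same inequality gives \eqref{eq:lb-denseER} with the same $\epsilon$. With \eqref{eq:dense_nec} at equality, $s=n^{-1/2+o(1)}$, so $\max\{s_1,s_2\}\le (m-1)s\to0$. Moreover $np^2s_i\ge np^2 s=n^{1/2-o(1)}\to\infty$.
\end{itemize}
The reduction to equality in (iii) must be justified as in Appendix B.3: $\TV$ is monotone under further subsampling, since decreasing $s$ is a post-processing applied identically under both hypotheses. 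This monotonicity argument should be verified for the $m$-graph model, and I would check it first. Given these checks, \Cref{thm:asym_er_weak_detection_converses} yields $\TV(\P_{X^1Y},\Q_{X^1Y})=o(1)$, so $\TV(\P_X,\Q_X)\le o(1)+4\epsilon_m=o(1)$, which completes the induction.
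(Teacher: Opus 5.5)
Your proposal is correct and follows the paper's argument: induction on $m$ via \Cref{thm:detection_genie}, with the last-matching step handled by \Cref{thm:asym_er_weak_detection_converses} at $s_1=s$, $s_2=1-(1-s)^{m-1}$, and with the equality reduction in (iii). That reduction is valid for $m$ graphs, because thinning each $X^k$ independently with probability $s/s'$ maps both the planted and null laws at $s'$ to those at $s$. Your exact bound $s_1s_2\le(m-1)s^2$ is slightly more careful than the paper's asymptotic $s_2\sim(m-1)s$ in the sparse regime, where the $\omega(n^{-1/3})$ slack matters.

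One small correction: under the extended $\Q$, the law of $Y$ is not exactly $\P_Y$, since there $X^{2:m}$ has law $\Q_{X^{2:m}}$. However, $\TV(\Q_Y,\P_Y)\le\epsilon_m$ by \Cref{lemma:var_same_conditional} and data processing, and the proof of \Cref{thm:detection_genie} in fact ends with the bound $\TV(\P_{X^1Y},\P_{X^1}\otimes\P_Y)+4\epsilon_m$, so your conclusion is unaffected.
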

While Corollary \ref{cor:asym_er_weak_detection_converses}(i) does not impose a separate condition on $p$ it is listed as ``sparse regime" because it includes the case $p=n^{-1+o(1)}.$   For larger values of $p$ the other parts of Corollary \ref{cor:asym_er_weak_detection_converses} are stronger.   Since $\varrho(1)=\varrho^{-1}(1)=1,$ the conditions in Corollary \ref{cor:asym_er_weak_detection_converses}(i) can be viewed as a limiting version of Corollary \ref{cor:asym_er_weak_detection_converses}(ii) as $\alpha \to 1.$

In the positive direction for the sparse regime, the recent report \cite{ochoa2026detection} applies a method inspired by \cite{ding2023detection} to derive positive results for strong detection in case $m\geq 2.$ See \Cref{sec:discussion_detection}.

\subsection{Converses for alignment of multiple correlated \erdosrenyi\ graphs}
\label{sec:ERalignment_converses}

\subsubsection{Partial alignment for multiple \erdosrenyi\ graphs}

Combining \Cref{thm:recovery_genie,thm:asym_dd_recovery}
yields the following corollary.

\medskip

\begin{corollary}[Intractability of partial alignment for multiple ER graphs]
\label{cor:asym_dd_recovery} 
The following converse bounds hold for the alignment problem with $m$
\erdosrenyi\  graphs.

\begin{enumerate}[label=\textup{(\roman*)}]
\item \thmcaseLabel{cor:asym_dd_recovery-i}
\textup{(Sparse regime.)}
Suppose \(p=n^{-\Omega(1)}\), \(np=\omega(\log^2 n)\), and, for
some fixed \(\epsilon>0\),
\[
        nps^2 \le \frac{1-\epsilon} {m-1} .
\]
Then partial alignment is intractable.

\item \thmcaseLabel{cor:asym_dd_recovery-ii}
\textup{(Moderately sparse regime.)}
Suppose that for some $\alpha \in (0,1)$
\(
        p=n^{-\alpha+o(1)},
\)
and that, for some fixed \(\epsilon>0\),
\[
        nps^2 \le \frac{\varrho^{-1}(1/\alpha)-\epsilon}{m-1} .
\]
Then partial alignment is intractable.

\item \thmcaseLabel{cor:asym_dd_recovery-iii}
\textup{(Dense regime.)}
    Suppose  $p \le 1- \Omega(1),$  $p=n^{-o(1)},$ and for some fixed $\epsilon>0$,
    \begin{align*}
        np s^2  \le \frac{ (2-\epsilon) \log n}{(m-1)\left( \log \frac{1}{p} -1 + p \right)}. 
    \end{align*}
Then partial alignment is intractable.
\end{enumerate}
\end{corollary}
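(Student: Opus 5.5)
The plan is to combine \Cref{thm:recovery_genie}(b) with \Cref{thm:asym_dd_recovery}. By \Cref{sec:towards_ER}, recovering $\pistar_2$ from $(X^1,Y)$ is exactly the asymmetric two-graph \erdosrenyi\ alignment problem with parameters $n$, $p$, $s_1=s$ and $s_2=s':=1-(1-s)^{m-1}$. So it suffices to check, in each of the three regimes, that the hypotheses of the corresponding part of \Cref{thm:asym_dd_recovery} hold for $(p,s_1,s_2)$ when the hypotheses of the corollary hold for $(p,s)$. Intractability of partial recovery of $\pistar_2$ from $(X^1,Y)$ then transfers to intractability of partial alignment of $X$.

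\textbf{Step 1: $s\to 0$.} In regime (i), $p=n^{-\Omega(1)}$ and $np=\omega(\log^2 n)$, so $np\to\infty$. The bound $nps^2\le (1-\epsilon)/(m-1)$ then forces $s^2=O(1/(np))\to 0$. In regime (ii), $np=n^{1-\alpha+o(1)}\to\infty$, so again $s\to 0$. In regime (iii), $\log(1/p)-1+p$ is bounded below by a positive constant because $p\le 1-\Omega(1)$, and it is $n^{o(1)}$-bounded above. Hence $nps^2\le O(\log n)$, and with $p=n^{-o(1)}$ this gives $s^2\le n^{-1+o(1)}$.

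\textbf{Step 2: asymptotics of $s'$.} Since $s\to 0$ and $m$ is fixed, $s'=(m-1)s(1+O(s))$. Therefore $\max\{s_1,s_2\}\to 0$ and
\[
nps_1s_2=(m-1)nps^2\,(1+o(1)).
\]
In (i) this gives $nps_1s_2\le (1-\epsilon)(1+o(1))\le 1-\epsilon/2$ for large $n$. In (ii) it gives $nps_1s_2\le \varrho^{-1}(1/\alpha)-\epsilon/2$, using that $\varrho^{-1}(1/\alpha)$ is a fixed constant. In (iii) the right-hand side becomes $(2-\epsilon/2)\log n/(\log\frac1p-1+p)$. In each case the threshold condition of \Cref{thm:asym_dd_recovery} holds with $\epsilon/2$ in place of $\epsilon$.

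\textbf{Step 3: the side condition in the dense regime.} This is the main obstacle: \Cref{thm:asym_dd_recovery-iii} additionally requires $np^2s_i=\omega(1)$. I would use the monotonicity argument the paper mentions before \Cref{cor:asym_er_weak_detection_converses}. Intractability of partial alignment is monotone in $s$, because a larger-$s$ observation can be degraded to a smaller-$s$ one by independent subsampling and a fresh random relabeling, and an estimator for the smaller-$s$ problem can then be run on the degraded data. So it suffices to treat the case where the condition holds with equality. Then $s=n^{-1/2+o(1)}$ and $p=n^{-o(1)}$, so $np^2s_i=n^{1/2-o(1)}\to\infty$ for $i=1,2$. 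With the side condition verified, invoking \Cref{thm:asym_dd_recovery} and \Cref{thm:recovery_genie}(b) completes the proof.
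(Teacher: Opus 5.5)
Your proposal is correct and follows the paper's route. You combine \Cref{thm:recovery_genie}(b) with \Cref{thm:asym_dd_recovery} using $s_1=s$ and $s_2=1-(1-s)^{m-1}\sim(m-1)s$. In the dense regime you reduce by thinning to the equality case, so that $s=n^{-1/2+o(1)}$ and $np^2s_i=\omega(1)$ hold automatically; this is exactly the reduction the paper describes before \Cref{cor:asym_er_weak_detection_converses}. (Minor points: the bound $1-(1-s)^{m-1}\le (m-1)s$ gives each threshold directly without the $\epsilon/2$ slack, and the ``fresh random relabeling'' in your degradation step is unnecessary, since independent subsampling alone preserves $\pistar$.)
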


The threshold in \Cref{cor:asym_dd_recovery} in case $np$ is fixed (special case of sparse regime) matches the threshold in \cite{vassaux2025} for intractability of partial alignment, which is conjectured therein to be tight.  It is possible that the threshold identified in \cite{ochoa2026detection} for achievability is the tight condition.

\subsubsection{Almost-exact alignment for multiple \erdosrenyi\ graphs}

The natural corollary to \Cref{thm:recovery_genie,thm:almost_exact_recovery} is that almost-exact alignment is impossible if $nps(1-(1-s)^{m-1})= O(1).$   However, by the fact $s \leq 1 - (1-s)^{m-1} \leq (m-1)s$, it follows that almost-exact alignment for $m$ graphs is intractable if $nps^2 = O(1).$  We thus get the following corollary with condition not depending on $m.$

\medskip

\begin{corollary}[Impossibility of almost-exact alignment for multiple ER graphs]
    \label{cor:almost_exact_recovery}
    If $nps^2 = O(1)$ then almost-exact alignment is impossible.
\end{corollary}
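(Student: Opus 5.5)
The plan is to derive \Cref{cor:almost_exact_recovery} by combining \Cref{thm:recovery_genie}(a) with \Cref{thm:almost_exact_recovery}, applied to the reduced two-graph problem of recovering $\pistar_2$ from $(X^1,Y)$.

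First, recall from \Cref{sec:towards_ER} what this reduced problem is. The pair $(X^1,Y)$ has exactly the law of the asymmetric two-graph \erdosrenyi\ model with parameters
\[
p,\qquad s_1=s,\qquad s_2=s'=1-(1-s)^{m-1}.
\]
Here $\pistar_2$ is uniform on $\Sn$ and plays the role of the hidden alignment. The point to verify is that $X^0_e$, $\xi^1_e$, $\widetilde{\xi}^2_e$ are mutually independent across $e$ with the stated Bernoulli laws. This was already established in \Cref{sec:towards_ER}.

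Second, bound the product of retention probabilities. Since $1-(1-s)^{m-1}\le (m-1)s$ by the union bound (or Bernoulli's inequality),
\[
nps_1s_2 = nps\bigl(1-(1-s)^{m-1}\bigr)\le (m-1)\,nps^2 .
\]
Because $m$ is fixed and $nps^2=O(1)$, this gives $nps_1s_2=O(1)$. \Cref{thm:almost_exact_recovery} then says almost-exact alignment of $(X^1,Y)$, i.e. almost-exact recovery of $\pistar_2$ from $(X^1,Y)$, is impossible. By \Cref{thm:recovery_genie}(a), almost-exact recovery of $\pistar$ from $X$ is impossible as well.

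The only potential obstacle is checking that \Cref{thm:almost_exact_recovery}, as an immediate consequence of \cite[Theorem 2]{cullina2019kcore}, applies with no extra side conditions when $s_2\neq s_1$. One might worry about $p$ or $s$ not tending to zero. The underlying reason it should hold is robust: when $nps_1s_2=O(1)$, the intersection graph $X^1_e\wedge Y_{\pistar_2(e)}$ is \erdosrenyi$(n,ps_1s_2)$ with bounded average degree, so a linear number of vertices are isolated in it. A constant fraction of vertices then carry no information distinguishing them, even if some of them are non-isolated in $X^1$ or $Y$ individually, which is the posterior-symmetry argument of \cite{cullina2019kcore}.

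If that citation needed a hypothesis such as $ps_i\to 0$, I would handle the bounded-$s$ case separately. Since the statement only asks for impossibility, I would give the estimator more information, revealing all of $X^0$ edges not in the intersection graph, and argue via isolated intersection-graph vertices directly. I expect this to be unnecessary and the proof to be essentially the two displayed steps.
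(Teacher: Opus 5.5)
Your proposal is correct and matches the paper's argument. The reduced problem $(X^1,Y)$ is the asymmetric two-graph model with $s_1=s$ and $s_2=1-(1-s)^{m-1}\le (m-1)s$, so $nps_1s_2=O(1)$, and combining \Cref{thm:almost_exact_recovery} with \Cref{thm:recovery_genie}(a) gives the result. The paper uses \Cref{thm:almost_exact_recovery} as stated, with no extra side conditions, so your fallback argument for bounded $s$ is not needed.
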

This converse is tight under the additional assumptions $p\leq n^{-\Omega(1)}$ and $ps^2 \leq \frac 1 {8e^3}$ because if $nps^2 \to \infty$ under such assumptions then the graphs can be aligned sequentially pairwise to achieve almost-exact alignment.

\subsubsection{Exact alignment for multiple \erdosrenyi\ graphs}

Combining \Cref{thm:recovery_genie,thm:asym_exact_recovery}
yields the following corollary.

\medskip

\begin{corollary}[Impossibility of exact-alignment for multiple ER graphs]
\label{cor:asym_exact_recovery}
Suppose that $ps=o(1)$ and let $\epsilon > 0.$   If
\[
        nps\big(1-(1-s)^{m-1}\big)(1-\sqrt{p})^2 \leq (1-\epsilon)\log n
\]
then exact alignment is impossible.  (It follows that if
\(
        nps^2(1-\sqrt{p})^2 \leq \frac{(1-\epsilon)\log n} {m-1}
\)
then exact alignment is impossible.)
\end{corollary}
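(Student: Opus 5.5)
The plan is to combine \Cref{thm:recovery_genie}(a) with the exact-alignment converse \Cref{thm:asym_exact_recovery}. First I will record what \Cref{sec:towards_ER} already gives. Given $\pistar_2$, the pairs $(X^1_e, Y_{\pistar_2(e)})$ are i.i.d. over $e$. Moreover, $(X^1,Y)$ has exactly the law of the asymmetric two-graph \erdosrenyi\ model with parameters $n,p$ and
\[
s_1=s, \qquad s_2=1-(1-s)^{m-1},
\]
with hidden permutation $\pistar_2$. Hence exact recovery of $\pistar_2$ from $(X^1,Y)$ is the same as exact alignment in that asymmetric model. By \Cref{thm:recovery_genie}(a), if this exact alignment is impossible, then exact alignment of the $m$ graphs is impossible.

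Next I will check the hypotheses of \Cref{thm:asym_exact_recovery}. We have $s_2\le (m-1)s$ and $s_1=s\le s_2$. So $p\max\{s_1,s_2\}\le (m-1)ps=o(1)$, because $m$ is fixed and $ps=o(1)$. The assumed inequality
\[
nps\big(1-(1-s)^{m-1}\big)(1-\sqrt p)^2\le(1-\epsilon)\log n
\]
is exactly $s_1s_2p(1-\sqrt p)^2\le(1-\epsilon)\frac{\log n}{n}$. The impossibility half of \Cref{thm:asym_exact_recovery} then applies, and the first claim follows.

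For the parenthetical claim, I will use $1-(1-s)^{m-1}\le (m-1)s$, which holds by Bernoulli's inequality or a union bound. This gives
\[
nps\big(1-(1-s)^{m-1}\big)(1-\sqrt p)^2\le (m-1)nps^2(1-\sqrt p)^2 \le (1-\epsilon)\log n,
\]
so the main condition holds and the same conclusion follows.

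The only real subtlety is the appeal to the asymptotic statement $(\sqrt{p_{00}p_{11}}-\sqrt{p_{01}p_{10}})^2\sim s_1s_2p(1-\sqrt p)^2$ inside \Cref{thm:asym_exact_recovery}. That statement needs $p\to0$ or $\max\{s_1,s_2\}\to0$. This is guaranteed here because $p\max\{s_1,s_2\}=o(1)$, so I will verify that and leave it at that.
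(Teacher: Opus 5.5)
Your proposal is correct and matches the paper's argument. The paper obtains this corollary by combining \Cref{thm:recovery_genie} with \Cref{thm:asym_exact_recovery} for the asymmetric model with $s_1=s$ and $s_2=1-(1-s)^{m-1}$, checking $p\max\{s_1,s_2\}\le (m-1)ps=o(1)$ just as you do, and it deduces the parenthetical claim from $1-(1-s)^{m-1}\le (m-1)s$.
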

This is tight for $p = C \log(n)/n$, by the positive result of~\cite{ameen2024exact,racz2024harnessing}.

\section{Discussion} \label{sec-discussion}

In this section we compare the converses of this paper to existing converses in the literature.  As noted in \Cref{sec:Gaussian_case_recovery,sec:Gaussian_detection}, a single
sharp threshold $\rho^2 \sim \frac 8 m \cdot \frac {\log n} n$ for multiple correlated Gaussian graphs has been previously identified for weak to strong detection and intractability of partial alignment to exact alignment.    The last-matching method of this paper yields converses for  $\rho^2 \leq \frac {4(1-\epsilon)} {m-1} \cdot \frac {\log n} n,$ which is smaller than the actual threshold by the factor $\frac{m}{2(m-1)}.$

In the case of two graphs, all the converse bounds in \Cref{sec:converses_asymmetric} with possibly different subsampling probabilities $s_1$ and $s_2$ reduce to previously known bounds in the special case $s_1=s_2,$ for which case those bounds are known to be tight.   Therefore, the resulting converse bounds in \Cref{sec:ER_corollaries} are tight in the special case $m=2.$   Thus, for the remainder of this section we focus on the \erdosrenyi\ models for three or more graphs.

\subsection{Detection thresholds}  \label{sec:discussion_detection}

We first comment on other {\em converses} in the literature for detection of correlation for three or more correlated \erdosrenyi\ graphs.  \cite{vassaux2025} proved that if $np$ is fixed as $n\to\infty$ and $nps(1-(1-s)^{m-1}) < 1$ then partial alignment is intractable.   This is the same threshold as in Corollary \ref{cor:asym_dd_recovery}(i) although  Corollary \ref{cor:asym_dd_recovery}(i) holds under conditions that don't overlap with the condition that $np$ is fixed.  We know of no other converses for detection of correlation for three or more correlated \erdosrenyi\ graphs.

We next comment on {\em positive results} about detection of correlation for three or more correlated \erdosrenyi\ graphs.   The paper \cite{ochoa2026detection} gives a positive result for both the sparse and moderately sparse regimes, as follows.  
Let 
\[
    q_m(s) \coloneqq 1 - (1-s)^m - ms(1-s)^{m-1},
\]
which is the probability that an edge in the parent graph $X^0$ is included in at least two of the $m$ observed graphs.   \cite{ochoa2026detection} shows that if $p=n^{-\alpha + o(\alpha)}$ for some $\alpha \in (0,1]$, if $nps^2 = O(1),$ and $npq_m(s)\geq \varrho^{-1}\left(\frac{m-1}{\alpha}\right)(1+\epsilon)$, then strong detection is possible.    The positive result of \cite{ochoa2026detection} is for strong detection while Corollary \ref{cor:asym_er_weak_detection_converses} is for weak detection, so there is the possibility of different thresholds for these two properties.   
Still, we would like to compare the sufficient condition $npq_m(s)\geq \varrho^{-1}\left(\frac{m-1}{\alpha}\right)$ of \cite{ochoa2026detection} to the converse condition $nps^2 \leq \frac{\varrho^{-1}(1/\alpha)}{m-1}.$  We consider fixed $m\geq 3$ and, for brevity, ignore $\epsilon$ terms.   
Note that $q_m(s) \leq \binom m2 s^2$, while if $s\to 0$, then $q_m(s) \sim \binom m2 s^2$.  Thus, when $s\to 0$, the sufficient condition is asymptotically equivalent to
$nps^2 \geq \frac{1}{\binom m2}\varrho^{-1}\left({(m-1)}/{\alpha}\right).$

Consider the moderately sparse regime and a small value of $\alpha.$   Since  ${\varrho(\lambda)}/{\lambda}$ decreases from $1$ to $1/2$ as $\lambda$ increases from $1$ to $\infty,$  it holds for small $\alpha$ that 
\[
    \varrho^{-1}\Big(\frac{m-1}{\alpha}\Big) \approx (m-1)\varrho^{-1}\Big(\frac 1 {\alpha}\Big)
\]
and the ratio of the bounds for small $s$ and $\alpha$ is approximately $\frac m {2(m-1)}$ (the same ratio found for the Gaussian case).
Since $\varrho^{-1}(x) \leq 2x$, a still stronger sufficient condition for strong detection is $nps^2 \geq \frac{4}{\alpha m}.$  For the sparse case $\alpha=1$,
$\varrho^{-1}(\alpha)=1$ so the converse condition of Corollary \ref{cor:asym_er_weak_detection_converses}(i)  is $nps^2 \leq \frac 1 {m-1}$.
So for the sparse case $p=n^{-1+o(1)}$ with $s\to 0$ the ratio between the achievable and converse bounds is $\frac m {4(m-1)} \approx  \frac 1 4$ for large $m$.

The following positive result for strong detection in the dense case is proved in Appendix~\ref{app:ER-GLRT-achievability} (it is similar to the proof for $m=2$ in \cite{wu2023testing}).

\medskip 

\begin{theorem}[Achievability of strong detection in dense regime] 
\label{thm:ER-GLRT-achievability}
Suppose that
\(
    p=n^{-o(1)}
\)
and
\(
    p\le 1-c
\)
for some fixed constant \(c>0\). 
If, for some fixed $\epsilon > 0$,
\[
    (n-1) \,  p s^2
    \ge (1+\epsilon) \frac{4\log n} {m\, \big( \log(1/p) -1+p  \big)} \, , 
    ~~~~ \mbox{ and } ~~~~
    \sum_{r=3}^m
    s^{r-2}p^{-\binom{r-1}{2}}
    \, = \, 
    o(1) \, ,
\]
then strong detection is achievable.
\end{theorem}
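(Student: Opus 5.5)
The plan is to analyze a generalized likelihood (scan) test that maximizes the total number of pairwise co-occurring edges over all alignments. For an alignment $\pi=(\Id,\pi_2,\dots,\pi_m)$ and each $e\in\En$, let $K_e(\pi)=\sum_{k=1}^m X^k_{\pi_k(e)}$. Define
\[
T(\pi)=\sum_{e\in\En}\binom{K_e(\pi)}{2}=\sum_{1\le k<\ell\le m}\sum_{e\in\En} X^k_{\pi_k(e)}X^\ell_{\pi_\ell(e)},
\qquad T=\max_{\pi}T(\pi).
\]
The test declares $H_1$ when $T\ge \tau:=(1-\delta)\binom m2 N p s^2$, for a small $\delta=\delta(\epsilon)>0$. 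The argument then has two parts, as in the $m=2$ argument of \cite{wu2023testing}.

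\textbf{Under $\P$.} I would lower bound $T$ by $T(\pistar)$. For each $e$, the aligned vector $(X^k_{\pistar_k(e)})_k$ equals $X^0_e(\xi^1_e,\dots,\xi^m_e)$, so $\Expect T(\pistar)=\binom m2 N p s^2$. The summands are independent over $e$, hence
\[
\Var\, T(\pistar)=O\big(N p s^2(1+ms)^2\big).
\]
Since $Nps^2\asymp n\log n/(\log(1/p)-1+p)\to\infty$, Chebyshev gives $T(\pistar)\ge\tau$ with probability $1-o(1)$.

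\textbf{Under $\Q$.} This is the main step. For a fixed $\pi$, the vectors $(X^k_{\pi_k(e)})_k$ are not exactly independent across $e$, because each $X^k$ is an \erdosrenyi\ graph. I would avoid this by Poissonization or a coupling argument, or simply note that the $m$ graphs are independent under $\Q$ and each has i.i.d.\ edges. With that, $(X^k_{\pi_k(e)})_{k,e}$ are i.i.d.\ $\Bern(ps)$ for any fixed $\pi$, so $K_e(\pi)\sim\Bin(m,ps)$ independently over $e$. Choose $\theta=\log(1/p)$ in the Chernoff bound. Then
\[
\log\Expect e^{\theta\binom{K_e}{2}}\le \sum_{r\ge2}\binom mr (ps)^r\big(p^{-\binom r2}-1\big)\le \binom m2 p^2s^2\Big(\frac1p-1\Big)\Big(1+O\Big(\textstyle\sum_{r=3}^m s^{r-2}p^{-\binom{r-1}{2}}\Big)\Big).
\]
This holds because the ratio of the $r$-th term to the $r=2$ term is of order $s^{r-2}p^{\,r-1-\binom r2}=s^{r-2}p^{-\binom{r-1}2}$. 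The second hypothesis of \Cref{thm:ER-GLRT-achievability} therefore makes the higher-order co-occurrences negligible. This yields
\[
\Q\big(T(\pi)\ge\tau\big)\le\exp\Big(-(1-o(1)-O(\delta))\binom m2 N p s^2\big(\log\tfrac1p-1+p\big)\Big).
\]
The $p$ in the exponent comes from the $-(1/p-1)p^2s^2$ term, after adjusting with $\delta$ and the optimal $\theta$.

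\textbf{Union bound.} The number of alignments is $(n!)^{m-1}\le e^{(m-1)n\log n}$. Using $N=n(n-1)/2$, the exponent above exceeds $(1+\epsilon')(m-1)n\log n$ exactly when $(n-1)ps^2\ge(1+\epsilon)\frac{4\log n}{m(\log(1/p)-1+p)}$, for small enough $\delta$. Hence $\Q(T\ge\tau)=o(1)$, and the sum of the two error probabilities is $o(1)$, which is strong detection.

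I expect the main obstacle to be the Chernoff step under $\Q$. One issue is justifying the per-edge independence of $K_e(\pi)$ and the choice $\theta=\log(1/p)$, together with tracking the $p$ correction in the rate function $\log(1/p)-1+p$ when $p$ is not tending to zero (only $p\le 1-c$). The other is controlling the $r\ge3$ terms uniformly. If the crude fixed-$\theta$ bound loses the $-1+p$ correction, I would instead optimize $\theta$ exactly against the Poisson-like log-MGF $\binom m2p^2s^2(e^\theta-1)$, which gives precisely the rate $t\log(t/\mu_0)-t+\mu_0$ with $t/\mu_0=1/p$.
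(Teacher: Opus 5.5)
Your proposal is correct and follows essentially the same route as the paper. Both use the same pairwise-overlap statistic $\max_\pi \sum_e \binom{K_e(\pi)}{2}$ and concentration of $T(\pistar)$ under $\P$ (you use Chebyshev with a fixed small $\delta$, the paper uses Bernstein with $\delta_n\to 0$). Both then apply a Chernoff bound under $\Q$ with $\theta=\log(1/p)$, where the hypothesis $\sum_{r\ge 3}s^{r-2}p^{-\binom{r-1}{2}}=o(1)$ makes the higher co-occurrences negligible, followed by a union bound over $(n!)^{m-1}$ alignments.

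The obstacles you anticipate do not arise. For fixed $\pi$ the entries $X^k_{\pi_k(e)}$ are exactly i.i.d.\ $\Bern(ps)$ under $\Q$, so no Poissonization or coupling is needed. The fixed choice $\theta=\log(1/p)$ already gives the full rate $\log(1/p)-1+p$, and the $\delta\log(1/p)$ loss is absorbed because $\log(1/p)\le C'(\log(1/p)-1+p)$ when $p\le 1-c$.
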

Corollary \ref{cor:asym_er_weak_detection_converses}(iii) states the converse:
Suppose
\(
        p\le 1-\Omega(1), p=n^{-o(1)},
\)
and
\(
        np^2s=\omega(1).
\)
If, for some fixed \(\epsilon>0\),
\[
        nps^2
        \le \frac{ (2-\epsilon)\log n}{(m-1)(\log(1/p)-1 + p)},
\]
then $\TV(\P.\Q)=o(1)$ (i.e. weak detection is impossible).   The thresholds in these two results differ by the factor $\frac m {2(m-1)}$ (the same ratio found for the Gaussian case).

{\bf Open problem 1:} It is an open problem to determine whether any of the three converse bounds for impossibility of weak detection given in Corollary \ref{cor:asym_er_weak_detection_converses} are tight.

As noted above, Corollary \ref{cor:almost_exact_recovery} for impossibility of almost-exact alignment is tight under two additional assumptions 
$p\leq n^{-\Omega(1)}$ and $ps^2 \leq \frac 1 {8e^3}$ because if $nps^2 \to \infty$ under such assumptions then the graphs can be aligned sequentially pairwise to achieve almost-exact alignment.

\subsection{Alignment thresholds}

The sharp threshold for exact alignment in the specific sparse regime $p=c\log (n)/n$ for a fixed constant $c$ was identified in~\cite{ameen2024exact,racz2024harnessing} (and also in~\cite{ameen2024aligning}.)
The sharp condition is $cs(1-(1-s)^{m-1}) = 1.$    Thus, in that specific regime, Corollary \ref{cor:asym_exact_recovery} is tight up to the exact constant. 

{\bf Open problem 2:} It is an open problem to determine whether Corollary \ref{cor:asym_exact_recovery} is tight for a broader range of $p.$

{\bf Open problem 3:} It is an open problem to determine whether any of the three converse bounds for intractability of partial alignment given in Corollary \ref{cor:asym_dd_recovery} are tight.

\section*{Acknowledgments}
The authors acknowledge the use of AI-based tools for assistance with literature search, editorial refinement, and improving the clarity of exposition. In addition, AI-based tools were used to assist in assembling the existing arguments from the symmetric setting, which was a first step in our process of writing Appendices~\ref{app:asym_common_tools}--\ref{app:asym_alignment_proofs}. The authors take full responsibility for the correctness of the results.

\bibliographystyle{alpha}
\bibliography{bibliography.bib}

@article{deshpande2017asymptotic,
  title={Asymptotic mutual information for the balanced binary stochastic block model},
  author={Deshpande, Yash and Abbe, Emmanuel and Montanari, Andrea},
  journal={Information and Inference: A Journal of the IMA},
  volume={6},
  number={2},
  pages={125--170},
  year={2017},
  publisher={Oxford University Press}
}

@article{ameenhajekGaussian,
      title={Sharp Detection Threshold for Correlation among Multiple Unlabeled {G}aussian Networks}, 
      author={Taha Ameen and Bruce Hajek},
      year={2026},
      journal ={arXiv preprint arXiv:2504.16279},
      url={https://arxiv.org/abs/2504.16279}, 
}

@article{racz2024harnessing,
	author = {R{\'a}cz, Mikl{\'o}s Z and Zhang, Jifan},
	journal = {Advances in Neural Information Processing Systems},
	pages = {53834--53886},
	title = {Harnessing multiple correlated networks for exact community recovery},
	volume = {37},
	year = {2024}}

@inproceedings{ameen2024exact,
	author = {Ameen, Taha and Hajek, Bruce},
	booktitle = {2025 IEEE International Symposium on Information Theory (ISIT)},
	doi = {10.1109/ISIT63088.2025.11195705},
	pages = {1-6},
	title = {Exact Random Graph Matching with Multiple Graphs},
	year = {2025}}

@article{ameen2024aligning,
    author = {Ameen, Taha and Hajek, Bruce},
    title = {Aligning Multiple Inhomogeneous Random Graphs: Fundamental Limits of Exact Recovery},
    journal = {Operations Research},
    volume = {0},
    number = {0},
    year = {0},
    doi = {10.1287/opre.2025.1808},
    URL = {https://doi.org/10.1287/opre.2025.1808},
    eprint = {https://doi.org/10.1287/opre.2025.1808}
}

@inproceedings{ameen2025detecting,
	author = {Ameen, Taha and Hajek, Bruce},
	booktitle = {2025 IEEE International Symposium on Information Theory (ISIT)},
	doi = {10.1109/ISIT63088.2025.11195646},
	pages = {1-6},
	title = {Detecting Correlation Between Multiple Unlabeled {G}aussian Networks},
	year = {2025}}

@article{calissano2024graph,
	author = {Calissano, Anna and Papadopoulo, Theodore and Pennec, Xavier and Deslauriers-Gauthier, Samuel},
	journal = {Human Brain Mapping},
	number = {1},
	pages = {e26554},
	publisher = {Wiley Online Library},
	title = {Graph alignment exploiting the spatial organization improves the similarity of brain networks},
	volume = {45},
	year = {2024}}

@article{cullina2016improved,
	author = {Cullina, Daniel and Kiyavash, Negar},
	journal = {ACM SIGMETRICS Performance Evaluation Review},
	number = {1},
	pages = {63--72},
	publisher = {ACM New York, NY, USA},
	title = {Improved achievability and converse bounds for {E}rd{\H{o}}s-{R}{\'e}nyi graph matching},
	volume = {44},
	year = {2016}}

@article{cullina2017exact,
	author = {Cullina, Daniel and Kiyavash, Negar},
	journal = {arXiv preprint arXiv:1711.06783},
	title = {Exact alignment recovery for correlated {E}rd{\H{o}}s-{R}{\'e}nyi graphs},
	year = {2017}}

@article{cullina2019kcore,
	author = {Cullina, Daniel and Kiyavash, Negar and Mittal, Prateek and Poor, Vincent},
	journal = {Proceedings of the ACM on Measurement and Analysis of Computing Systems},
	number = {3},
	pages = {1--21},
	publisher = {ACM New York, NY, USA},
	title = {Partial recovery of {E}rd{\H{o}}s-{R}{\'e}nyi graph alignment via $k$-core alignment},
	volume = {3},
	year = {2019}}

@article{ding2023densesubgraph,
	author = {Ding, Jian and Du, Hang},
	journal = {The Annals of Statistics},
	number = {4},
	pages = {1718--1743},
	publisher = {Institute of Mathematical Statistics},
	title = {Matching recovery threshold for correlated random graphs},
	volume = {51},
	year = {2023}}

@article{ding2023detection,
	author = {Ding, Jian and Du, Hang},
	journal = {IEEE Transactions on Information Theory},
	number = {8},
	pages = {5289--5298},
	publisher = {IEEE},
	title = {Detection threshold for correlated {E}rd{\H{o}}s-{R}{\'e}nyi graphs via densest subgraph},
	volume = {69},
	year = {2023}}

@article{du2025optimal,
	author = {Du, Hang},
	journal = {arXiv preprint arXiv:2502.12077},
	title = {Optimal recovery of correlated {E}rdos-{R}enyi graphs},
	year = {2025}}

@article{even2025statistical,
	author = {Even, Bertrand and Ganassali, Luca},
	journal = {arXiv preprint arXiv:2512.00610},
	title = {Statistical-computational gap in multiple {G}aussian graph alignment},
	year = {2025}}

@inproceedings{ganassali2022sharp,
	author = {Ganassali, Luca},
	booktitle = {Mathematical and Scientific Machine Learning},
	organization = {PMLR},
	pages = {314--335},
	title = {Sharp threshold for alignment of graph databases with {G}aussian weights},
	year = {2022}}

@article{hall2023partial,
	author = {Hall, Georgina and Massouli{\'e}, Laurent},
	journal = {Operations Research},
	number = {1},
	pages = {259--272},
	publisher = {INFORMS},
	title = {Partial recovery in the graph alignment problem},
	volume = {71},
	year = {2023}}

@inproceedings{josephs2021recovery,
	author = {Josephs, Nathaniel and Li, Wenrui and Kolaczyk, Eric. D.},
	booktitle = {2021 55th Asilomar Conference on Signals, Systems, and Computers},
	doi = {10.1109/IEEECONF53345.2021.9723092},
	pages = {1268-1273},
	title = {Network Recovery from Unlabeled Noisy Samples},
	year = {2021}}

@inproceedings{narayanan2009deanonymizing,
	author = {Narayanan, Arvind and Shmatikov, Vitaly},
	booktitle = {2009 30th IEEE Symposium on Security and Privacy},
	organization = {IEEE},
	pages = {173--187},
	title = {De-anonymizing social networks},
	year = {2009}}

@inproceedings{narayanan2008robust,
	author = {Narayanan, Arvind and Shmatikov, Vitaly},
	booktitle = {2008 IEEE Symposium on Security and Privacy (sp 2008)},
	organization = {IEEE},
	pages = {111--125},
	title = {Robust de-anonymization of large sparse datasets},
	year = {2008}}

@inproceedings{pedarsani2011privacy,
	author = {Pedarsani, Pedram and Grossglauser, Matthias},
	booktitle = {Proceedings of the 17th ACM SIGKDD International Conference on Knowledge Discovery and Data Mining},
	pages = {1235--1243},
	title = {On the privacy of anonymized networks},
	year = {2011}}

@article{singh2008global,
	author = {Singh, Rohit and Xu, Jinbo and Berger, Bonnie},
	journal = {Proceedings of the National Academy of Sciences},
	number = {35},
	pages = {12763--12768},
	publisher = {National Academy of Sciences},
	title = {Global alignment of multiple protein interaction networks with application to functional orthology detection},
	volume = {105},
	year = {2008}}

@article{sporns2005human,
	author = {Sporns, Olaf and Tononi, Giulio and K{\"o}tter, Rolf},
	journal = {PLoS computational biology},
	number = {4},
	pages = {e42},
	publisher = {Public Library of Science San Francisco, USA},
	title = {The human connectome: a structural description of the human brain},
	volume = {1},
	year = {2005}}

@article{vassaux2025,
  title={The feasibility of multi-graph alignment: a Bayesian approach},
  author={Vassaux, Louis and Massouli{\'e}, Laurent},
  journal={Advances in Applied Probability},
  pages={1--40},
  year={2025},
  publisher={Cambridge University Press}
}

@article{wu2022settling,
	author = {Wu, Yihong and Xu, Jiaming and Yu, Sophie H},
	journal = {IEEE Transactions on Information Theory},
	number = {8},
	pages = {5391--5417},
	publisher = {IEEE},
	title = {Settling the sharp reconstruction thresholds of random graph matching},
	volume = {68},
	year = {2022}}

@article{wu2023testing,
	author = {Wu, Yihong and Xu, Jiaming and Yu, Sophie H},
	journal = {The Annals of Applied Probability},
	number = {4},
	pages = {2519--2558},
	publisher = {Institute of Mathematical Statistics},
	title = {Testing correlation of unlabeled random graphs},
	volume = {33},
	year = {2023}}

@article{ochoa2026detection,
	author = {Ochoa, Daniel},
	note = {https://math.mit.edu/research/undergraduate/urop-plus/documents/2025/Ochoa.pdf},
	title = {A detection threshold for correlated multi-graphs},
	year = {2026}}

@article{feng2025strong,
	author = {Feng, Chenxu},
	journal = {arXiv preprint arXiv:2506.12752},
	title = {Strong Detection Threshold for Correlated {E}rd{\H{o}}s {R}\'enyi Graphs with Constant Average Degree},
	year = {2025}}

@article{otter1948number,
	author = {Otter, Richard},
	journal = {Annals of Mathematics},
	number = {3},
	pages = {583--599},
	publisher = {JSTOR},
	title = {The number of trees},
	volume = {49},
	year = {1948}}

@book{frieze2015introduction,
	author = {Frieze, Alan and Karo{\'n}ski, Micha{\l}},
	publisher = {Cambridge University Press},
	title = {Introduction to random graphs},
	year = {2015}}

%\appendix
\appendices

\section{Common tools for the asymmetric two-graph model}
\label{app:asym_common_tools}

This section presents the probabilistic tools used throughout the converse proofs for the asymmetric two-graph model. 

\subsection{Likelihood ratio}

Denote
\(
        t\coloneqq s_1s_2.
\)
The null law is denoted by $\Q$. For a fixed alignment $\pi\in\Sn$, let $\P_\pi$ denote the planted law conditioned on $\pistar=\pi$.  Thus
\(
    \P
    =
    \frac1{n!}\sum_{\pi\in\Sn}\P_\pi.
\)
%
%\subsection*{The likelihood ratio}
%
For a correctly aligned edge pair, the joint law under $\P_\pi$ is
\[
    \P_\pi\big((X^1_e,X^2_{\pi(e)})=(x_1,x_2)\big)
    =
    \begin{cases}
        1-p(s_1+s_2-s_1s_2), & (x_1,x_2)=(0,0),\\
        p(1-s_1)s_2, & (x_1,x_2)=(0,1),\\
        ps_1(1-s_2), & (x_1,x_2)=(1,0),\\
        ps_1s_2, & (x_1,x_2)=(1,1).
    \end{cases}
\]
Under $\Q$, the two coordinates are independent Bernoulli variables with parameters
$ps_1$ and $ps_2$.  Hence the single-edge likelihood ratio is
\begin{align}
\label{eq:asym_single_edge_lr}
    \ell(x_1,x_2)
    =
    \begin{cases}
        \frac{1-p(s_1+s_2-s_1s_2)}{(1-ps_1)(1-ps_2)},
            & (x_1,x_2)=(0,0),\\%[1.2em]
        \frac{1-s_1}{1-ps_1},
            & (x_1,x_2)=(0,1),\\%[1.2em]
        \frac{1-s_2}{1-ps_2},
            & (x_1,x_2)=(1,0),\\%[1.2em]
        \frac1p,
            & (x_1,x_2)=(1,1).
    \end{cases}
\end{align}
For a candidate alignment $\pi\in\Sn$, define
\[
        L_\pi(X^1,X^2)
        \coloneqq
        \prod_{e\in\En}\ell(X^1_e,X^2_{\pi(e)}),
        \qquad
        L(X^1,X^2)
        \coloneqq
        \frac1{n!}\sum_{\pi\in\Sn}L_\pi(X^1,X^2).
\]
Then $L=\dd\P/\dd\Q$ \footnote{We slightly abuse notation. Throughout, $\P$ denotes the joint distribution of $(X,\pi)$, where $X=(X^1,X^2)$. However, when we write $\frac{d\P}{d\Q}$ or $\TV(\P,\Q)$, the symbol $\P$ refers to $\P_X$, the marginal distribution of $X$ under $\P$.}. It is useful to record the centered single-edge expansion of $\ell$.  Define
\[
        u(x_1)
        \coloneqq
        \frac{x_1-ps_1}{\sqrt{ps_1(1-ps_1)}},
        ~~~~~~
        v(x_2)
        \coloneqq
        \frac{x_2-ps_2}{\sqrt{ps_2(1-ps_2)}}.
\]
Then
\begin{align}
\label{eq:asym_lr_orthogonal_expansion}
        \ell(x_1,x_2)
        =
        1+\rho_{\rm e} \, u(x_1)v(x_2),
        ~~~~ \mbox{ where }
        \rho_{\rm e}^2
        =
        \frac{(1-p)^2s_1s_2}{(1-ps_1)(1-ps_2)}.
\end{align}
In particular,
\(
    \rho_{\rm e}^2\leq s_1s_2=t.
\)

\subsection{Edge orbits and orbit-union graphs}

Let $K_n$ denote the complete graph on vertex set $[n]$.  Fix a true alignment
$\pistar\in\Sn$ and a candidate alignment $\pi\in\Sn$.  Define the relative permutation
\(
        \sigma\coloneqq {\pistar}^{-1}\circ\pi.
\)
The induced edge permutation (which we also denote by $\sigma$) partitions $\En$ into edge orbits.  We denote the collection of these orbits by
\[
        \mathcal O_\sigma
        \coloneqq
        \{O\subseteq\En: O \text{ is an orbit of } \sigma \}.
\]
We regard an orbit $O\subseteq\En$ as a graph whose edge set is $O$ and whose vertex set consists of the vertices incident to the edges of $O$.
An orbit $O\in\mathcal O_\sigma$ is called \emph{fully occupied} if
\(
        X^1_e=X^2_{\pistar(e)}=1
\)
for all
\(
    e\in O.
\)
Let $J_\sigma$ denote the graph obtained by taking the union of all fully occupied orbit
graphs.  Also define the true intersection graph
\[
        H^\star
        \coloneqq
        \{e\in\En: X^1_e=X^2_{\pistar(e)}=1\}.
\]
Under $\P_{\pistar}$, the graph $H^\star$ has distribution $\ER(n,pt)$, and $J_\sigma$ is
always a subgraph of $H^\star$.

\medskip 

\begin{lemma}
\label{lem:asym_orbit_calculation}
Fix $\pistar,\pi\in\Sn$, let $\sigma={\pistar}^{-1}\circ\pi$, and let
$O\in\mathcal O_\sigma$ be an edge orbit of length $r$.  Then
\begin{align}
\label{eq:asym_orbit_unconditional}
        \Expect_{\P_{\pistar}}
        \left[
            \prod_{e\in O}\ell(X^1_e,X^2_{\pi(e)})
        \right]
        =
        1+\rho_{\rm e}^{2r}.
\end{align}
\end{lemma}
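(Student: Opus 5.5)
We compute the expectation under $\P_{\pistar}$ of a product over an edge orbit $O=\{e_0,\sigma(e_0),\ldots,\sigma^{r-1}(e_0)\}$ of $\sigma={\pistar}^{-1}\circ\pi$. Write $e_j=\sigma^j(e_0)$ with indices mod $r$. Then $\pi(e_j)=\pistar(\sigma(e_j))=\pistar(e_{j+1})$. The product therefore becomes
\[
\prod_{j=0}^{r-1}\ell\big(X^1_{e_j},X^2_{\pistar(e_{j+1})}\big).
\]
Under $\P_{\pistar}$ the pairs $(A_j,B_j)\coloneqq(X^1_{e_j},X^2_{\pistar(e_j)})$ are i.i.d. over $j$, with the correctly aligned joint law displayed before \eqref{eq:asym_single_edge_lr}. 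The $j$-th factor is $\ell(A_j,B_{j+1})$, so the expectation is a cyclic chain. This is the trace of a transfer operator, which is the standard approach of \cite{wu2023testing} adapted to $s_1\ne s_2$.

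First, apply the orthogonal expansion \eqref{eq:asym_lr_orthogonal_expansion} to each factor:
\[
\ell(A_j,B_{j+1})=1+\rho_{\rm e}\,u(A_j)v(B_{j+1}).
\]
Second, expand the product over $j$ as a sum over subsets $T\subseteq\{0,\ldots,r-1\}$ of $\rho_{\rm e}^{|T|}\prod_{j\in T}u(A_j)v(B_{j+1})$.

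Third, factorize each term's expectation over the independent pairs $(A_k,B_k)$. Pair $k$ contributes $u(A_k)$ if $k\in T$ and $v(B_k)$ if $k-1\in T$. The marginals are $A_k\sim\Bern(ps_1)$ and $B_k\sim\Bern(ps_2)$, so $\Expect u(A_k)=\Expect v(B_k)=0$. Hence a term vanishes unless, for every $k$, either both $k\in T$ and $k-1\in T$, or neither. Cyclically this forces $T=\emptyset$ or $T$ equal to all of $\{0,\ldots,r-1\}$.

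Fourth, evaluate the two surviving terms. The empty set contributes $1$. The full set contributes $\rho_{\rm e}^r\big(\Expect[u(A)v(B)]\big)^r$.

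Finally, compute $\Expect[u(A)v(B)]$. We have
\[
\Expect[u(A)v(B)]=\frac{\Cov(A,B)}{\sqrt{ps_1(1-ps_1)ps_2(1-ps_2)}},
\qquad
\Cov(A,B)=ps_1s_2-p^2s_1s_2=p(1-p)s_1s_2.
\]
Dividing gives
\[
\Expect[u(A)v(B)]=\frac{(1-p)\sqrt{s_1s_2}}{\sqrt{(1-ps_1)(1-ps_2)}}=\rho_{\rm e}.
\]
As a consistency check, this equals $\Expect_\Q[\ell\,uv]$, which follows from \eqref{eq:asym_lr_orthogonal_expansion} using $\Expect_\Q u^2=\Expect_\Q v^2=1$. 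The full-set term is therefore $\rho_{\rm e}^{2r}$, and the expectation is $1+\rho_{\rm e}^{2r}$.

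The degenerate case $r=1$ must be checked separately. Here $e_0$ is fixed by $\sigma$, so $\pi(e_0)=\pistar(e_0)$ and the expectation is $\Expect_{\P}[\ell(A,B)]=\Expect_\Q[\ell^2]=1+\rho_{\rm e}^2$, which agrees with the formula. The main care needed is the cyclic index bookkeeping in the third step. That the orbit's edges are distinct, so the pairs $(A_j,B_j)$ are genuinely independent, follows from $O$ being an orbit of length $r$.
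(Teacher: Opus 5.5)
Your proof is correct and is essentially the paper's argument: an orthogonal expansion of $\ell$ followed by cancellation around the cycle. The only difference is bookkeeping. The paper first changes measure to $\Q$, expands the $2r$ factors $\ell(A_i,B_i)\,\ell(A_i,B_{i+1})$, and uses independence of all the $u(A_i),v(B_i)$ under $\Q$. You instead stay under $\P_{\pistar}$, expand only the $r$ factors, and absorb the change of measure into the single identity $\Expect_{\P}[u(A)v(B)]=\rho_{\rm e}$; your consistency check $\Expect_\Q[\ell\,uv]=\rho_{\rm e}$ shows this is the same computation.
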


\begin{proof}
Write the orbit as
\(
    O=(e_0,e_1,\ldots,e_{r-1}),
\)
where
\(
    e_{i+1}=\sigma(e_i)
\)
with indices understood modulo $r$.  Since $\pi={\pistar}\circ\sigma$, we have $\pi(e_i)={\pistar}(e_{i+1})$.  Let
\(
    A_i=X^1_{e_i}
\)
and
\(
    B_i=X^2_{{\pistar}(e_i)}.
\)
Under $\P_{\pistar}$, the pairs $(A_i,B_i)$ are independent over $i$.  Therefore, by changing
measure from $\P_{\pistar}$ to the corresponding product null law on these $r$ aligned pairs,
\begin{align*}
    \Expect_{\P_{\pistar}}
    \left[
        \prod_{i=0}^{r-1}\ell(A_i,B_{i+1})
    \right]
    =
    \Expect_{\Q}
    \left[
        \prod_{i=0}^{r-1}\ell(A_i,B_i)\ell(A_i,B_{i+1})
    \right].
\end{align*}
Using \eqref{eq:asym_lr_orthogonal_expansion}, this is
\[
    \Expect_\Q
    \bigg[ \, 
        \prod_{i=0}^{r-1}
        \Big(1+\rho_{\rm e}u(A_i)v(B_i)\Big) \cdot 
        \Big(1+\rho_{\rm e}u(A_i)v(B_{i+1})\Big)
    \bigg].
\]
Under $\Q$, all $u(A_i)$ and $v(B_i)$ have mean zero and variance one and are mutually
independent.  After expanding the product, a term has nonzero expectation only if every
variable $u(A_i)$ and $v(B_i)$ appears either zero times or two times.  On the cycle, the
only such choices are the empty choice and the choice of all $2r$ factors.  Thus the
expectation is $1+\rho_{\rm e}^{2r}$.
\end{proof}

For $\sigma\in\Sn$, define
\[
    \mathcal J_\sigma
    \coloneqq
    \Big\{
        J\subseteq K_n:
        J=\bigcup_{O\in\mathcal S} O
        \text{ for some }
        \mathcal S\subseteq\mathcal O_\sigma
    \Big\}.
\]
Here, as for the orbit graphs, the vertex set of a nonempty graph consists of the vertices incident to its edges. Thus $\mathcal J_\sigma$ is the collection of orbit-union graphs, and $J_\sigma\in\mathcal J_\sigma$ for every realization. Equivalently, for any subgraph $J$ of $K_n,$
\[    
        J\in\mathcal J_\sigma
        \qquad\Longleftrightarrow\qquad
        \sigma(J)=J,
\]
where $\sigma(J)$ denotes the graph whose edge set is $\{\sigma(e): e\in E(J)\}.$ 
Note that ${\cal J}_\sigma$ contains the null graph and graphs in  ${\cal J}_\sigma$ have no isolated vertices.
\medskip 

\begin{lemma}[Orbit-union estimates]
\label{lem:asym_dd_cond_orbit}
Fix ${\pistar},\sigma\in\Sn$ and let $J\in\mathcal J_\sigma$. Then
\begin{align}
\label{eq:asym_legal_contribution}
    \Expect_{\P_{\pistar}}
    \left[
        \ind{J_\sigma=J}
        L_{{\pistar}\circ\sigma}(X^1,X^2)
    \right]
    \leq
    t^{|E(J)|}.
\end{align}
Under the additional assumption $s_1 + s_2 \leq 1$ (or more generally if
\(
        (1-ps_1)(1-ps_2)\geq 1-p),
\)
for every edge orbit $O\in\mathcal O_\sigma$ of length $r$, if $F_O$
denotes the event that $O$ is fully occupied, then
\begin{align}
\label{eq:asym_conditional_orbit}
        \Expect_{\P_{\pistar}} \!
        \Big[
            \,
            \prod_{e\in O}\ell(X^1_e,X^2_{{\pistar}\circ\sigma(e)})
            \,\Big|\,
            F_O^c
        \Big]
        \leq 1,
\end{align}
whereas on $F_O$,
\(
        \prod_{e\in O}\ell(X^1_e,X^2_{{\pistar}\circ\sigma(e)})=p^{-r}.
\)
Consequently,
\begin{align}
\label{eq:asym_conditional_orbit_union}
    \Expect_{\P_{\pistar}}
    \left[
        L_{{\pistar}\circ\sigma}(X^1,X^2)
        \,\middle|\,
        J_\sigma=J
    \right]
    \leq
    p^{-|E(J)|}.
\end{align}
\end{lemma}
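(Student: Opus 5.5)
The plan is to exploit the fact that $L_{\pistar\circ\sigma}$ factorizes over the edge orbits of $\sigma$, and that these factors are mutually independent under $\P_{\pistar}$. Write $\pi=\pistar\circ\sigma$. As in the proof of \Cref{lem:asym_orbit_calculation}, for an orbit $O=(e_0,\dots,e_{r-1})$ the orbit factor $\prod_{i}\ell(A_i,B_{i+1})$ depends only on the aligned pairs $(A_i,B_i)=(X^1_{e_i},X^2_{\pistar(e_i)})$, $i=0,\dots,r-1$. Distinct orbits use disjoint sets of aligned pairs, and these pairs are independent under $\P_{\pistar}$. Hence the orbit factors, together with the occupancy indicators $\ind{F_O}$, are independent across $O\in\mathcal O_\sigma$.

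Since orbits are disjoint, each $J\in\mathcal J_\sigma$ determines uniquely the set $\mathcal S$ of orbits whose union is $J$. Thus $\{J_\sigma=J\}=\bigcap_{O\in\mathcal S}F_O\cap\bigcap_{O\notin\mathcal S}F_O^c$ is an intersection of independent per-orbit events. On $F_O$ every factor is $\ell(1,1)=1/p$, so the orbit product equals $p^{-r}$; this is the stated identity on $F_O$.

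\textbf{Bound \eqref{eq:asym_legal_contribution}.} By independence, the expectation splits into a product over orbits.
\begin{itemize}
\item For $O\in\mathcal S$ the orbit term is $p^{-r}\,\P(F_O)=p^{-r}(pt)^r=t^r$, since each aligned pair equals $(1,1)$ with probability $pt$.
\item For $O\notin\mathcal S$, \Cref{lem:asym_orbit_calculation} gives the orbit term
\[
\Expect\Big[\ind{F_O^c}\prod_{e\in O}\ell\Big]=1+\rho_{\rm e}^{2r}-t^r\le 1,
\]
using $\rho_{\rm e}^2\le t$.
\end{itemize}
Multiplying over all orbits gives $t^{\sum_{O\in\mathcal S}|O|}=t^{|E(J)|}$.

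\textbf{Bound \eqref{eq:asym_conditional_orbit}.} The same computation gives
\[
\Expect\Big[\prod_{e\in O}\ell\,\Big|\,F_O^c\Big]=\frac{1+\rho_{\rm e}^{2r}-t^r}{1-(pt)^r}.
\]
This is at most $1$ if and only if $(\rho_{\rm e}^2/t)^r+p^r\le 1$. Under the hypothesis $(1-ps_1)(1-ps_2)\ge 1-p$ we have
\[
\rho_{\rm e}^2/t=\frac{(1-p)^2}{(1-ps_1)(1-ps_2)}\le 1-p .
\]
Then $(1-p)^r+p^r\le 1$ for $r\ge1$, which settles this bound. The hypothesis itself follows from $s_1+s_2\le1$: expanding, $(1-ps_1)(1-ps_2)=1-p(s_1+s_2)+p^2s_1s_2\ge 1-p$.

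\textbf{Bound \eqref{eq:asym_conditional_orbit_union}.} Conditioning on $J_\sigma=J$ amounts to conditioning each orbit independently on $F_O$ or on $F_O^c$. The conditional expectation therefore factorizes. The orbits in $\mathcal S$ contribute exactly $p^{-|O|}$, and the others contribute at most $1$ by \eqref{eq:asym_conditional_orbit}, so the product is at most $p^{-|E(J)|}$.

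The only step requiring care is the inequality for the conditional orbit expectation, where the hypothesis on $s_1,s_2$ enters through $\rho_{\rm e}^2/t\le 1-p$. The rest is bookkeeping once the per-orbit independence is justified.
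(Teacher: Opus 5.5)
Your proposal is correct and follows essentially the same route as the paper: factorize $L_{\pistar\circ\sigma}$ over the independent edge orbits, get $t^r$ from fully occupied orbits and $1+\rho_{\rm e}^{2r}-t^r\le 1$ from the complement via \Cref{lem:asym_orbit_calculation}, and then use $\rho_{\rm e}^2\le(1-p)t$ for the conditional bound. Your condition $(\rho_{\rm e}^2/t)^r+p^r\le 1$ is the paper's inequality $(1-p)^r\le 1-p^r$ in another form, and your explicit check that $s_1+s_2\le 1$ implies $(1-ps_1)(1-ps_2)\ge 1-p$ spells out a step the paper leaves implicit.
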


\begin{proof}
For $O\in\mathcal O_\sigma$, set
\[
    L_O
    \coloneqq
    \prod_{e\in O}\ell(X^1_e,X^2_{{\pistar}\circ\sigma(e)}).
\]
Then
\(
    L_{{\pistar}\circ\sigma}
    =
    \prod_{O\in\mathcal O_\sigma}L_O .
\)
Let $F_O$ be the event that $O$ is fully occupied.  Since $J \in \mathcal{J}_\sigma$, every orbit is
either contained in $J$ or disjoint from $J$, and hence
\[
        \{J_\sigma=J\}
        =
        \bigcap_{O\subseteq J}F_O
        \cap
        \bigcap_{O\not\subseteq J}F_O^c.
\]
Different edge orbits involve disjoint collections of aligned pairs
$(X^1_e,X^2_{{\pistar}(e)})$, so the corresponding factors are independent under $\P_{\pistar}$.
Thus  
\begin{align*}
    \Expect_{\P_{\pistar}}
    \left[
        \ind{J_\sigma=J}L_{{\pistar}\circ\sigma}
    \right]
    =
    \prod_{O\subseteq J}
    \Expect_{\P_{\pistar}}\big[\Indc_{F_O}L_O\big]
    \prod_{O\not\subseteq J}
    \Expect_{\P_{\pistar}}\big[\Indc_{F_O^c}L_O\big].
\end{align*}
If $|O|=r$, then on $F_O$ every factor in $L_O$ equals $\ell(1,1)=1/p$, while
$\P_{\pistar}(F_O)=(pt)^r$.  Therefore
\[
        \Expect_{\P_{\pistar}}\big[\Indc_{F_O}L_O\big]
        =
        (pt)^r p^{-r}
        =
        t^r.
\]
In contrast, by \Cref{lem:asym_orbit_calculation},
\(
        \Expect_{\P_{\pistar}}[L_O]=1+\rho_{\rm e}^{2r}.
\)
Using that $\rho_{\rm e}^2 \leq s_1 s_2$, we get
\[
        \Expect_{\P_{\pistar}}\big[\Indc_{F_O^c}L_O\big]
        =
        1+\rho_{\rm e}^{2r}-t^r
        \leq 1.
\]
Combining the last two displays gives
\[
    \Expect_{\P_{\pistar}}
    \left[
        \ind{J_\sigma=J}L_{{\pistar}\circ\sigma}
    \right]
    \leq
    \prod_{O\subseteq J}t^{|O|}
    =
    t^{|E(J)|} \, .
\]
For the conditional estimate, the assumption $(1-ps_1)(1-ps_2) \geq (1-p)$ gives
\[
        \rho_{\rm e}^2
        =
        \frac{(1-p)^2t}{(1-ps_1)(1-ps_2)}
        \leq
        (1-p)t.
\]
Hence
\(
        \rho_{\rm e}^{2r}
        \leq
        (1-p)^rt^r
        \leq
        (1-p^r)t^r
        =
        t^r-(pt)^r,
\)
and therefore
\[
        \Expect_{\P_{\pistar}}[L_O\mid F_O^c]
        =
        \frac{1+\rho_{\rm e}^{2r}-t^r}{1-(pt)^r}
        \leq 1.
\]
On $F_O$, every factor in $L_O$ equals $p^{-1}$, so
\(
        L_O=p^{-r}.
\)
The edge orbits are independent under $\P_{\pistar}$; thus, conditioning on
$J_\sigma=J$, the orbits contained in $J$ contribute $p^{-|E(J)|}$ and all
other orbits contribute at most one.  This proves
\eqref{eq:asym_conditional_orbit_union}.
\end{proof}

\subsection{Conditional second moment method}

The conditional second moment method has been useful for tightening converse bounds regarding the impossibility of weak detection (equivalent to $\TV(\P,\Q)\to 0.$)
See \cite[Section 4]{wu2023testing} and \cite{ding2023detection} for an introduction to the method.

\medskip

\begin{lemma}[Conditional second moment lemma]
\label{lem:conditional_second_moment}
Let $\calE$ be a set in the codomain of $(X,\pistar)$ (where $X=(X^1,X^2)$) and
suppose $\P\{(X,\pistar)\in\calE \}=1 - o(1)$ and one of the following two conditions is true:
\begin{align}
\Expect_{\Q}[(L')^2] & = 1 + o(1)   \label{eq:cond1_forTV} \\
\Expect_{\P}[\Indc_{(X,\pistar)\in \calE}L(X)] & = 1 + o(1)  \label{eq:cond2_forTV}
\end{align}
where $L' = L'(X)$ is the likelihood ratio between the $X$ marginal of $\P(~\cdot~ |(X,\pistar) \in \calE)$ and $\Q.$
Then $\TV(\P,\Q)\to 0.$  Equivalently, weak detection is impossible.
\end{lemma}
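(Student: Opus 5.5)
Let $\P'$ denote the $X$-marginal of the conditioned law $\P(\,\cdot\mid (X,\pistar)\in\calE)$. The plan is to route everything through $\P'$ via the triangle inequality,
\[
\TV(\P,\Q)\le \TV(\P,\P')+\TV(\P',\Q).
\]
The first term is handled by \Cref{lemma:var_conditional}. Applied to the joint law of $(X,\pistar)$, it gives $\TV(\P_{X\pistar},\P_{X\pistar}(\cdot\mid\calE))\le \P(\calE^c)=o(1)$. Marginalizing onto $X$ cannot increase total variation, so $\TV(\P,\P')=o(1)$.

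Under condition \eqref{eq:cond1_forTV}, the second term follows from Cauchy--Schwarz. We write
\[
2\TV(\P',\Q)=\Expect_\Q|L'-1|\le\sqrt{\Expect_\Q[(L')^2]-1},
\]
using $\Expect_\Q L'=1$. The right-hand side is $o(1)$, which completes this case.

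Under condition \eqref{eq:cond2_forTV}, I would avoid second moments altogether. Let $L_{\calE}(X)\coloneqq\Expect_\P[\Indc_{\calE}\mid X]\,L(X)$; this is the density with respect to $\Q$ of the sub-probability measure $A\mapsto\P(X\in A,(X,\pistar)\in\calE)$. Then $0\le L_{\calE}\le L$ pointwise, and
\[
\Expect_\Q[L_{\calE}]=\P(\calE)=1-o(1).
\]
Here I must read the hypothesis $\Expect_\P[\Indc_{\calE}L(X)]=1+o(1)$ carefully. It is a $\P$-expectation, and by the change of measure $\Expect_\P[\Indc_\calE L]=\Expect_\Q[L_{\calE}L]$. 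So the hypothesis says $\Expect_\Q[L_{\calE}L]=1+o(1)$. Combined with $\Expect_\Q[L_\calE]\to1$ and $L_\calE\le L$, this gives
\[
\Expect_\Q[L_\calE^2]\le \Expect_\Q[L_\calE L]=1+o(1).
\]
It follows that $\Expect_\Q[(L_\calE-1)^2]=\Expect_\Q L_\calE^2-2\Expect_\Q L_\calE+1=o(1)$. Hence $\Expect_\Q|L_\calE-1|=o(1)$ by Cauchy--Schwarz. Finally,
\[
\Expect_\Q|L-1|\le \Expect_\Q(L-L_\calE)+\Expect_\Q|L_\calE-1|=\P(\calE^c)+o(1)=o(1),
\]
so $\TV(\P,\Q)\to0$. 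This route can also be phrased through $L'=L_\calE/\P(\calE)$, which shows that \eqref{eq:cond2_forTV} essentially implies \eqref{eq:cond1_forTV}.

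The main obstacle is this interpretive step in the second case: identifying $\Expect_\P[\Indc_\calE L]$ with $\Expect_\Q[L_\calE L]$ and then using the pointwise domination $L_\calE\le L$ to convert the first-moment-type hypothesis into a second-moment bound on $L_\calE$. Everything else is a routine Cauchy--Schwarz and triangle-inequality argument. The equivalence with impossibility of weak detection was already noted in \Cref{sec:problem-formulation}.
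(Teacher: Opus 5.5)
Your proof is correct and follows essentially the same route as the paper. For \eqref{eq:cond2_forTV}, the paper expands $L'=\frac1a\Expect_\pi[\Indc_{(X,\pi)\in\calE}L_\pi]$ over two independent alignments and drops one indicator, which is exactly your pointwise bound $L_\calE\le L$ combined with the change of measure $\Expect_\P[\Indc_\calE L]=\Expect_\Q[L_\calE L]$; this yields $\Expect_\Q[(L')^2]\le \Expect_\P[\Indc_\calE L]/a^2$ and reduces to \eqref{eq:cond1_forTV}, whereas you finish with a direct $L^1$ bound. For \eqref{eq:cond1_forTV}, the paper simply cites \cite[Section 4]{wu2023testing}, and your triangle-inequality plus Cauchy--Schwarz argument is the standard proof of that cited fact.
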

\begin{proof}
See \cite[Section 4]{wu2023testing} for a proof that the condition
 $\P\{(X,\pistar) \in \calE\}=1-o(1)$ and \eqref{eq:cond1_forTV}
 imply the conclusion.

Let $a = \P\{(X,\pistar) \in \calE\}.$  Suppose $a=1-o(1)$ and
 \eqref{eq:cond2_forTV} hold.
 Using
\begin{align}
\label{eq:asym_conditional_lr}
        L'(X)
        =
        \frac{1}{a}
\Expect_{\pi}[ 
            \Indc_{(X,\pi) \in \calE}
            L_\pi(X) ]
\end{align}
(where $\Expect_{\pi}$ denotes expectation with respect to a uniformly random $\pi\in\Sn$)
yields
\begin{align*}
        \Expect_\Q[(L')^2]
        &=
        \frac{1}{a^2} \, 
         \Expect_{\pi_1}\Expect_{ \pi_2} \left[ 
        \Expect_\Q
        \left[
            \Indc_{(X,\pi_1)\in \calE}
            \Indc_{(X,\pi_2)\in \calE}
            L_{\pi_1}(X)L_{\pi_2}(X)
        \right] \right]
        \nonumber\\
        & \leq
        \frac{1}{a^2} \, 
         \Expect_{\pi_1}\Expect_{ \pi_2} \left[ 
        \Expect_\Q
        \left[
            \Indc_{(X,\pi_1)\in \calE}
            L_{\pi_1}(X)L_{\pi_2}(X)
        \right] \right]
        \nonumber\\
        & =
        \frac{1}{a^2} \, 
         \Expect_{\pi_1} \left[ 
        \Expect_\Q
        \left[
            \Indc_{(X,\pi_1)\in \calE}
            L_{\pi_1}(X)L(X)
        \right] \right]
        \nonumber\\
                & =
        \frac{1}{a^2} \, 
         \Expect_{\pi_1} \left[ 
        \Expect_\P
        \left[
            \Indc_{(X,\pistar)\in \calE}
            L(X) \bigg| \pistar=\pi_1 ]
        \right] \right]
        \nonumber\\ 
        & =
        \frac{
            \Expect_\P[\Indc_{(X,\pistar) \in \mathcal E}L(X)]
        }{a^2}.
\end{align*}
Since $\Expect_{\Q}[(L')^2] \geq 1$ we conclude that the condition  $a=1-o(1)$ and \eqref{eq:cond2_forTV} together imply
\eqref{eq:cond1_forTV} and hence imply the conclusion of the lemma.
\end{proof}

\section{Converse proofs for detection in the asymmetric setting}

\label{app:detection_converses_asymmetric}

\subsection{Proof of~\texorpdfstring{\Cref{thm:asym_er_weak_detection_converses-i}}{}}
\label{app:asym_sparse_detection}

The proof follows that of~\cite{feng2025strong}, with the modifications needed to handle the asymmetric setting. A graph is a pseudoforest if every connected component contains at most one cycle; equivalently, every
connected component $C$ satisfies $|E(C)|\leq |V(C)|$.  Define the conditioning event
\[
        \mathcal E_{\rm pf}
        \coloneqq
        \{H^\star \text{ is a pseudoforest}\}.
\]

We use two elementary facts about pseudoforests.

\medskip

\begin{lemma}[Subcritical \erdosrenyi\ graphs are pseudoforests]
\label{lem:asym_er_pseudoforest}
Let \(0\leq c_n<1\) and let
\(
    G_n\sim\ER(n,c_n/n).
\)
Then
\(
    \P(G_n\text{ is not a pseudoforest})
    \leq \frac{2}{n(1-c_n)^3}.
\)
Hence, if \(c_n\leq 1-\omega(n^{-1/3})\), then
\(
    \P(G_n\text{ is a pseudoforest})\to1.
\)
Moreover, for every fixed \(\epsilon>0\), if \(c_n\leq1-\epsilon\),
then
\(
    \P(G_n\text{ is not a pseudoforest})
    =O(n^{-1}).
\)
\end{lemma}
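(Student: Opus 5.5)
The plan is a first-moment bound on the number of connected subgraphs with more edges than vertices. A graph fails to be a pseudoforest exactly when some connected component $C$ has $|E(C)|\ge |V(C)|+1$. Such a component contains a connected subgraph $K$ with $|E(K)|=|V(K)|+1$: take a spanning tree of $C$ and add back two non-tree edges, then prune the leaves, which preserves the excess $|E|-|V|$. So it suffices to bound the expected number of connected subgraphs $K$ of $G_n$ with excess exactly one, i.e. on $k$ vertices and $k+1$ edges, and apply Markov's inequality.

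\textbf{Counting.} Let $N_k$ be the number of connected graphs on a labeled vertex set of size $k$ with $k+1$ edges. By Wright's asymptotics, or a cruder direct argument (a spanning tree together with two extra edges), $N_k\le C\,k^{k+1}$, possibly up to a harmless factor. I would aim for the clean bound $N_k\le k^{k-2}\binom{\binom k2}{2}\le k^{k+2}/8$. Then
\[
\Expect[\#K]\le \sum_{k\ge 4}\binom nk N_k\Big(\frac{c_n}{n}\Big)^{k+1}
\le \sum_{k}\frac{n^k}{k!}\,\frac{k^{k+2}}{8}\,\frac{c_n^{k+1}}{n^{k+1}}
\le \frac{1}{8n}\sum_k \frac{k^{k+2}}{k!}c_n^{k+1}.
\]
Using $k^k/k!\le e^k/\sqrt{2\pi k}$, each term is at most $(ec_n)^k k^{3/2}$. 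This does not decay when $c_n$ is near one, so the crude count is too lossy: the $e^k$ from Stirling must cancel.

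\textbf{Key step (main obstacle).} I need the sharper estimate $N_k\le C k^{k-2}\cdot k^{3}$, i.e. excess-one graphs number about $k^{k+1}$, with the correct $e$-cancellation. Rather than rely on that, I would use the falling factorial, $\binom nk\le \frac{n^k}{k!}e^{-k(k-1)/(2n)}$, and pair it with the fact that $\frac{k^{k+1}}{k!}c^k\approx \frac{(ce)^k}{\sqrt k}$ is still too large. The correct way is to count excess-one structures directly. Such a structure consists of two cycles joined by a path, or a theta graph; call this the core, on $j$ vertices. Trees are then attached to the core, but for a first-moment bound only the core is needed, since the core itself is a connected subgraph with $j+1$ edges and $j$ vertices. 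The number of labeled cores on $j$ vertices is at most $j^2\cdot j!$, since a core is determined by an ordering of its vertices together with $O(j^2)$ choices of the branch points. Hence
\[
\Expect[\#\text{cores}]\le \sum_{j}n^j\, j^2 j!\,\frac{1}{j!}\Big(\frac{c_n}{n}\Big)^{j+1}
\le \frac{1}{n}\sum_j j^2 c_n^{j}\le \frac{2}{n(1-c_n)^3},
\]
using $\sum_j j^2c^j\le 2/(1-c)^3$. Here $\binom nj$ is replaced by $n^j/j!$ and the $j!$ cancels against the orderings. I expect the care to lie in checking that the constant in the count of cores is at most $j^2$ once overcounting is handled; a precise count gives about $j^2/4$ per ordering up to symmetries, so the constant $2$ has room.

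\textbf{Conclusion.} Markov's inequality gives $\P(\text{not a pseudoforest})\le 2/(n(1-c_n)^3)$. If $1-c_n=\omega(n^{-1/3})$, then $n(1-c_n)^3\to\infty$, so the bound tends to $0$. If $c_n\le 1-\epsilon$ for fixed $\epsilon>0$, the bound is $2\epsilon^{-3}/n=O(n^{-1})$.
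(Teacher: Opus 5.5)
Your final argument is correct and is essentially the paper's proof, which follows the Frieze--Karo\'nski enumeration: a union bound over minimal two-cycle cores, each encoded as an ordering of its $j$ vertices (a Hamiltonian path) plus at most $j^2$ choices for the two extra edges hanging off the path's endpoints, giving $\frac{1}{n}\sum_j j^2 c_n^{j}\le \frac{2}{n(1-c_n)^3}$. The abandoned spanning-tree-plus-two-edges count is unnecessary and can be dropped.
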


\begin{proof}
We follow the enumeration in
\cite[Proof of Lemma~2.10, in particular (2.4)]
{frieze2015introduction}.
If \(G_n\) is not a pseudoforest, then it contains a minimal
connected subgraph containing two cycles. Such a subgraph on \(k\)
vertices consists of a path together with two additional edges, and
the number of such subgraphs on a fixed set of \(k\) labeled vertices
is at most \(k^2k!\). Therefore, by a union bound,
\begin{align*}
    \P(G_n\text{ is not a pseudoforest})
    &\leq
    \sum_{k=4}^n
        \binom{n}{k} k^2k!
        \left(\frac{c_n}{n}\right)^{k+1}  \\
    &\leq
    \frac{1}{n}\sum_{k=4}^\infty k^2c_n^{k+1}
    \leq
    \frac{2}{n(1-c_n)^3}.
\end{align*}
If \(c_n\leq1-\omega(n^{-1/3})\), then
\(n(1-c_n)^3\to\infty\), proving the first consequence.  If
\(c_n\leq1-\epsilon\), the bound is at most \(2/(n\epsilon^3)=O(n^{-1})\).
\end{proof}

\medskip

\begin{lemma}[Pseudoforest generating function]
\label{lem:asym_pseudoforest_generating}
Let $\mathcal G_{\rm pf}$ be the set of isomorphism classes of finite simple pseudoforests
with no isolated vertices, including the empty graph.  Define
\[
        G_{\rm pf}(t)
        \coloneqq
        \sum_{J\in\mathcal G_{\rm pf}}t^{|E(J)|}.
\]
There is a constant $\delta>0$ such that $G_{\rm pf}(t)<\infty$ for all
$0\leq t\leq\delta$.  Moreover,
\(
    G_{\rm pf}(t)=1+O(t)
\)
as
\(  
    t\to0.
\)
\end{lemma}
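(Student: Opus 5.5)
The plan is to decompose a pseudoforest with no isolated vertices into its connected components and to bound the generating function by a product over component types. Every connected component is either a tree with $k\ge 1$ edges ($k+1$ vertices) or a unicyclic graph with $k\ge 3$ edges ($k$ vertices). An isomorphism class of pseudoforests is a finite multiset of connected isomorphism classes. Hence
\[
    G_{\rm pf}(t) \;\le\; \prod_{C}\frac{1}{1-t^{|E(C)|}} \;=\; \exp\Big(\sum_{C}\sum_{j\ge1}\frac{t^{j|E(C)|}}{j}\Big),
\]
where $C$ ranges over connected pseudoforest isomorphism classes with at least one edge. In fact this is an equality, since it is the Euler product for multisets.

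It therefore suffices to show that $A(t):=\sum_C t^{|E(C)|}$ satisfies $A(t)<\infty$ and $A(t)=O(t)$ for small $t$. Indeed, $\sum_j t^{j|E(C)|}/j \le t^{|E(C)|}/(1-t)$, so $\log G_{\rm pf}(t)\le A(t)/(1-t)$, and then $G_{\rm pf}(t)\le e^{A(t)/(1-t)}=1+O(t)$.

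To bound $A$, the key step is an exponential bound on the number of connected classes with $k$ edges.

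\textbf{Trees.} By Otter's result \cite{otter1948number}, the number of unlabeled trees on $k+1$ vertices is at most $C_0\beta^{k}$ for some constant $\beta$ (about $2.956$). A cruder self-contained bound also works: a tree is determined up to isomorphism by a depth-first traversal from some root, giving a $\pm1$ string of length $2k$, so there are at most $4^k$ trees.

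\textbf{Unicyclic graphs.} A connected unicyclic graph with $k$ edges is a cycle of length $r\ge3$ with rooted trees hanging at the cycle vertices. Encode it by a closed walk that traverses the cycle once and performs a depth-first excursion into each hanging tree. This yields a word of length at most $2k$ over a constant-size alphabet, so there are at most $c^k$ such classes for some absolute constant $c$.

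Combining the two counts,
\[
    A(t)\le \sum_{k\ge1} (4^k + c^k)\,t^k,
\]
which converges for $t<\delta:=1/(2\max(4,c))$ and is $O(t)$ as $t\to0$, since the $k=1$ term is exactly $t$ (the single edge) and the tail is $O(t^2)$.

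The main obstacle is making the counting of connected unicyclic classes rigorous with an explicit exponential bound. I would first try the walk-encoding argument just described: mark the cycle, then serialize each hanging rooted tree by DFS. Each of the $k$ edges contributes at most two steps, each from a bounded set of symbols (go down, go up, advance along the cycle). That gives at most $3^{2k}$ words, so one can take $c=9$ and $\delta=1/18$ suffices. The finiteness of $G_{\rm pf}$ on $[0,\delta]$ and the expansion $1+O(t)$ then follow from the product bound above.
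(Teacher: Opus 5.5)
Your proof is correct and follows the same route as the paper: write $G_{\rm pf}$ as an Euler product over connected component types (trees and unicyclic graphs), bound the number of types of each size exponentially, and conclude $\log G_{\rm pf}(t)=O(t)$. The only difference is in the counting step: the paper uses Otter's bound for trees and $b_k\le k^2a_k$ (delete a cycle edge, then add an edge back) for unicyclic graphs, whereas your DFS and closed-walk encodings give cruder but fully self-contained exponential bounds, which is all the lemma needs.
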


\begin{proof}
A connected component of a pseudoforest is either a tree or a connected unicyclic graph.
Let $a_k$ be the number of unlabeled trees on $k$ vertices, and let $b_k$ be the number of
unlabeled connected unicyclic graphs on $k$ vertices.  Since a pseudoforest is a multiset
of such components,
\begin{align}
\label{eq:asym_gpf_product}
        G_{\rm pf}(t)
        =
        \prod_{k\geq2}(1-t^{k-1})^{-a_k}
        \prod_{k\geq2}(1-t^k)^{-b_k}.
\end{align}
By Otter's theorem \cite{otter1948number}, there is a constant $C<\infty$ and a constant
$\alpha>0$ such that
\(
        a_k\leq C\alpha^{-k}k^{-3/2}.
\)
Deleting one edge from the cycle of a connected unicyclic graph gives a tree, and after choosing the
tree there are at most $k^2$ possible edges to add back.  Hence
\(
    b_k\leq k^2a_k\leq C\alpha^{-k}k^{1/2}.
\)
Choose $\delta>0$ sufficiently small.  For $0\leq t\leq\delta$, using
$-\log(1-x)\leq2x$ for small $x$ gives
\[
        \log G_{\rm pf}(t)
        \leq
        2\sum_{k\geq2}a_kt^{k-1}
        +
        2\sum_{k\geq2}b_kt^k
        =
        O(t).
\]
Therefore $G_{\rm pf}(t)<\infty$ for $t\leq\delta$ and
$G_{\rm pf}(t)=e^{O(t)}=1+O(t)$ as $t\to0$.
\end{proof}

\medskip 

\begin{proposition}[Pseudoforest-conditional likelihood ratio]
\label{prop:asym_conditional_lr}
The conditional likelihood ratio satisfies
\[
        \Expect_\P\big[\Indc_{\mathcal E_{\rm pf}}L(X^1,X^2)\big]
        \leq
        G_{\rm pf}(t).
\]
In particular, if $t\to0$, then
\(
        \Expect_\P\big[\Indc_{\mathcal E_{\rm pf}}L(X^1,X^2)\big]
        \leq
        1+o(1).
\)
\end{proposition}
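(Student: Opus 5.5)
We will write $\Expect_\P[\Indc_{\mathcal E_{\rm pf}}L]$ as an average over the true alignment and the relative permutation. Since $L=\frac1{n!}\sum_\pi L_\pi$ and $\P=\frac1{n!}\sum_{\pistar}\P_{\pistar}$, substituting $\pi=\pistar\circ\sigma$ gives
\[
\Expect_\P[\Indc_{\mathcal E_{\rm pf}}L]
=\frac1{n!}\sum_{\pistar}\frac1{n!}\sum_{\sigma\in\Sn}\Expect_{\P_{\pistar}}\big[\Indc_{\mathcal E_{\rm pf}}L_{\pistar\circ\sigma}\big].
\]
For fixed $\pistar,\sigma$ we partition on the value of the orbit-union graph $J_\sigma\in\mathcal J_\sigma$. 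Since $J_\sigma\subseteq H^\star$ and a subgraph of a pseudoforest is a pseudoforest, on $\mathcal E_{\rm pf}$ the graph $J_\sigma$ is a pseudoforest. We then drop the indicator $\Indc_{\mathcal E_{\rm pf}}$, which is allowed because $L\ge 0$, but keep the restriction that $J$ is a pseudoforest. Applying \eqref{eq:asym_legal_contribution} of \Cref{lem:asym_dd_cond_orbit} gives
\[
\Expect_{\P_{\pistar}}\big[\Indc_{\mathcal E_{\rm pf}}L_{\pistar\circ\sigma}\big]\le \sum_{J\in\mathcal J_\sigma,\ J\text{ pseudoforest}} t^{|E(J)|}.
\]
This bound holds with no assumption on $s_1+s_2$, since \eqref{eq:asym_legal_contribution} does not need one.

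Next we average over $\sigma$. Exchanging sums, the bound becomes $\sum_{J}t^{|E(J)|}\,\Pr{\sigma(J)=J}$, where $\sigma$ is uniform on $\Sn$ and $J$ ranges over labeled pseudoforests on $[n]$ with no isolated vertices. This uses the equivalence $J\in\mathcal J_\sigma\iff\sigma(J)=J$. We group labeled graphs $J$ by isomorphism class $[J]$. The number of labeled copies is $n!/(|\mathrm{Aut}(J)|\,(n-v)!)$, where $v=|V(J)|$. For a uniform $\sigma$, $\sigma(J)=J$ requires $\sigma$ to map $V(J)$ onto itself, with the restriction an automorphism. Hence
\[
\Pr{\sigma(J)=J}=\frac{|\mathrm{Aut}(J)|\,(n-v)!}{n!}.
\]
Multiplying, each isomorphism class contributes exactly $t^{|E(J)|}$. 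The empty graph contributes $1$, matching the empty-graph term in $G_{\rm pf}$. The total is therefore at most $G_{\rm pf}(t)$, restricted to classes with at most $n$ vertices.

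The step needing the most care is this orbit-counting identity. It is the key point that turns the sum over $\sigma$ and labeled $J$ into an unlabeled generating function, so we must make sure it is exact rather than lossy, and that graphs without isolated vertices are counted with the correct vertex set $V(J)$. The "in particular" part then follows from \Cref{lem:asym_pseudoforest_generating}: once $t\le\delta$, $G_{\rm pf}(t)=1+O(t)=1+o(1)$ when $t\to0$.
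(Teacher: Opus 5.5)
Your proposal is correct and follows essentially the same route as the paper: average over the relative permutation $\sigma$, split on the orbit-union graph $J_\sigma$, use that $J_\sigma \subseteq H^\star$ is a pseudoforest on $\mathcal E_{\rm pf}$, apply \eqref{eq:asym_legal_contribution}, exchange sums via $J\in\mathcal J_\sigma \iff \sigma(J)=J$, and cancel $(n-v)!\,|\operatorname{Aut}(J)|$ against the count of labeled copies to obtain $G_{\rm pf}(t)$. Your remark that \eqref{eq:asym_legal_contribution} needs no condition on $s_1+s_2$ is also right, since $\rho_{\rm e}^2\le t$ holds for all parameters.
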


\begin{proof}
Condition on the true alignment $\pistar$.  Since
\(
    L(X^1,X^2)
    =
    \frac1{n!}\sum_{\pi\in\Sn}L_\pi(X^1,X^2),
\)
and since $\pi={\pistar}\circ\sigma$ ranges over $\Sn$ as $\sigma$ ranges over $\Sn$, we get
\[
    \Expect_\P\big[\Indc_{\mathcal E_{\rm pf}}L(X^1,X^2)\big]
    =
    \frac1{n!}\sum_{\sigma\in\Sn}
    \Expect_{\P_{\pistar}}
    \big[
        \Indc_{\mathcal E_{\rm pf}}L_{{\pistar}\circ\sigma}(X^1,X^2)
    \big].
\]
For fixed $\sigma$, split according to the value
$J_\sigma\in\mathcal J_\sigma$.  On
$\mathcal E_{\rm pf}$, the graph $J_\sigma$ is a subgraph of the pseudoforest $H^\star$, and is therefore itself a pseudoforest.  Applying
\Cref{lem:asym_dd_cond_orbit} gives
\[
    \Expect_\P\big[\Indc_{\mathcal E_{\rm pf}}L(X^1,X^2)\big]
    \leq
    \frac1{n!}
    \sum_{\sigma\in\Sn}
    \sum_{\substack{J \in \mathcal{J}_\sigma\\
                    J\text{ is a pseudoforest}}}
        t^{|E(J)|}.
\]
Since
\(
    J\in\mathcal J_\sigma
\)
is equivalent to
\(
    \sigma(J)=J,
\)
we may switch the order of summation:
\[
    \frac1{n!}
    \sum_{\substack{J\subseteq K_n\\ J\text{ is a pseudoforest} }}
        t^{|E(J)|}
        \big|\{\sigma\in\Sn:\sigma(J)=J\}\big|.
\]
%Here each graph is regarded as having no isolated vertices.
If $J$ has $v$ vertices, then
\(
    \big|\{\sigma\in\Sn:\sigma(J)=J\}\big|
    =
    (n-v)!|\operatorname{Aut}(J)|.
\)
For an isomorphism class $\mathbf J$ with $v$ vertices, the number of its labeled copies in $K_n$ is
\[
        \operatorname{Sub}_n(\mathbf J)
        =
        \frac{n!}{(n-v)!|\operatorname{Aut}(\mathbf J)|}.
\]
Thus, after summing over all labeled copies of a fixed isomorphism class, the factors
cancel and leave only $t^{|E(\mathbf J)|}$.  Therefore
\[
    \Expect_\P\big[\Indc_{\mathcal E_{\rm pf}}L(X^1,X^2)\big]
    \leq
    \sum_{\substack{\mathbf J\in\mathcal G_{\rm pf}:\\ |V(\mathbf J)|\leq n}}
        t^{|E(\mathbf J)|}
    \leq
    G_{\rm pf}(t).
\]
The final claim follows from \Cref{lem:asym_pseudoforest_generating}.
\end{proof}

We now prove \Cref{thm:asym_er_weak_detection_converses-i}.  Let
\(
        c_n\coloneqq npt.
\)
By assumption,
\(
        c_n\leq 1-\omega(n^{-1/3}).
\)
Since $H^\star\sim\ER(n,pt)=\ER(n,c_n/n)$ under the planted law, \Cref{lem:asym_er_pseudoforest} gives
\(
    \P(\mathcal E_{\rm pf})\to1.
\)
This together with the last line of \Cref{prop:asym_conditional_lr} and \Cref{lem:conditional_second_moment} completes the proof of
\Cref{thm:asym_er_weak_detection_converses-i}.

\subsection{Proof of~\texorpdfstring{\Cref{thm:asym_er_weak_detection_converses-ii}}{}}
\label{app:asym_dd_detection}

We use the notation from Appendix~\ref{app:asym_common_tools}; in particular,
\(
        t=s_1s_2,
\)
and
\(
        L=\frac{\dd\P}{\dd\Q}
        =
        \frac1{n!}\sum_{\pi\in\Sn}L_\pi,
\)
and the single-edge likelihood ratio $\ell$ is given by
\eqref{eq:asym_single_edge_lr}.
Set
\(
    \lambda_n\coloneqq npt
\)
and
\( 
    \lambda_\alpha \coloneqq \varrho^{-1}(1/\alpha).
\)
Assume
\[
        p=n^{-\alpha+o(1)},
        ~~~~~~
        \alpha\in(0,1),
        ~~~~~~
    \max\{s_1,s_2\} = 1 -\Omega(1),
        ~~~~~~
        \lambda_n\leq \lambda_\alpha-\epsilon .
\]
Recall that under $\P$, the true intersection graph is 
\(
        H^\star\sim\ER(n,pt)=\ER(n,\lambda_n/n).
\)

\medskip 
 
We now adapt the conditional second moment argument of~\cite{ding2023detection}.  Choose a constant
$\overline\lambda<\lambda_\alpha$ such that $\lambda_n\leq\overline\lambda$
for all sufficiently large $n$, and define
\[
        \xi\coloneqq
        \frac{\varrho(\overline\lambda)+1/\alpha}{2}
        <\frac1\alpha .
\]
Following ~\cite[Section III.B]{ding2023detection}, define a graph $H$ to be admissible if: (i) for every $U\subseteq [n]$, $|E_H(U)| \leq \xi |U|$; (ii) the maximum degree of $H$ is less than $\log n$; (iii)  every connected subgraph of size less than $\log\log n$ contains at most one cycle; and (iv) for any $k \geq 2$, the number of $k$-cycles is bounded by $n^{\delta k}$, where $\delta < \delta_0$ and $n p^{\xi} \geq n^{\delta_0}$ for some $\delta_0 > 0$. 

Let $\mathcal E_{\rm adm}$ be the event that $H^\star$ is admissible.  Since $H^\star\sim\ER(n,\lambda_n/n)$ and $\lambda_n\leq\overline\lambda$, the proof of \cite[Lemma III.4]{ding2023detection} gives
\(
        \P(\mathcal E_{\rm adm})=1-o(1).
\)
Moreover, the conditional lower bound in \cite[Lemma III.4]{ding2023detection} also applies verbatim (it is about a deterministic property of a realization of the intersection graph and not about how the graph is generated) so there is a constant $c_0>0$ such that, uniformly over every relative permutation $\sigma$ and every $J\in\mathcal J_\sigma$ satisfying the admissibility conditions above,
\[
        \P(\mathcal E_{\rm adm}\mid \pistar,J_\sigma=J)
        \geq
        (1-o(1)) \, c_0^{|E(J)|}.
\]
The restriction on $J$ is important: if $J$ already violates admissibility,
the conditional probability on the left is zero.  Under the planted law
conditioned on $\mathcal E_{\rm adm}$, however, $J_\sigma\subseteq H^\star$, and
each of the four admissibility conditions is inherited by subgraphs.  Thus
every value of $J_\sigma$ that appears below satisfies this restriction.

Let $\P'$ be the marginal law of the observed pair under the planted law
conditioned on $\mathcal E_{\rm adm}$, and let
\(
        L'
        \coloneqq
        {\dd\P'}/{\dd\Q}.
\)
By \Cref{lem:conditional_second_moment}, it is enough to show
\(
        \Expect_\Q[(L')^2]=1+o(1),
\)
because $\P(\mathcal E_{\rm adm})=1-o(1)$.
\Cref{lem:asym_dd_cond_orbit} applies because the assumption $\max\{s_1,s_2\} = 1-\Omega(1)$ and the fact $s_1s_2\to 0$ implies $s_1 + s_2 \leq 1$ for large enough $n,$  so the conclusion of \cite[Lemma III.1]{ding2023detection},
namely \eqref{eq:asym_conditional_orbit_union}, holds.  
Therefore \cite[Lemma III.3]{ding2023detection} goes through as in the symmetric case with the same proof, yielding:
\[
        \Expect_\Q[(L')^2]
        \leq
        \frac{1+o(1)}{\P(\mathcal E_{\rm adm})} \,
        \Expect_{\P'} \!
        \left[
            \frac1{n!}
            \sum_{\sigma\in\Sn}
            (c_0p)^{-|E(J_\sigma)|}
        \right].
\]

It remains to bound the last sum.  This is identical to the deterministic enumeration in \cite[Proposition III.7]{ding2023detection}.  That enumeration uses only the fact that the realized graph $H^\star$ is admissible and the condition
\(
        np^\xi\geq n^{\Omega(1)},
\)
which follows from $\xi<1/\alpha$ and $p=n^{-\alpha+o(1)}$.  It does not use $s_1$ and $s_2$ separately. Hence, whenever $H^\star$ is admissible,
\[
        \frac1{n!}
        \sum_{\sigma\in\Sn}
        (c_0p)^{-|E(J_\sigma)|}
        \leq
        1+o(1).
\]
Therefore
\(
        \Expect_\Q[(L')^2]\leq 1+o(1),
\)
completing the proof of  \Cref{thm:asym_er_weak_detection_converses-ii}.

\subsection{Proof of~\texorpdfstring{\Cref{thm:asym_er_weak_detection_converses-iii}}{}}
\label{app:asym_dense_detection}

Throughout this proof, set
\(       
    A\coloneqq X^1
\)
and 
\( 
    B\coloneqq X^2,
\)
and denote
\(
    t\coloneqq s_1s_2
\)
and
\(
        I_p\coloneqq p\left(\log\frac1p-1+p\right).
\)
Without loss of generality, assume \eqref{eq:lb-denseER} holds with
equality.  Indeed, if the left-hand side of \eqref{eq:lb-denseER} is
smaller, one can increase the subsampling probabilities, and independently thin the observed graphs back to the original values; total variation cannot increase under this processing.  Thus, we assume
\begin{align}
        ntI_p=(2-\epsilon)\log n.
        \label{eq:asym_dense_sharp_lower_bound}
\end{align}
In the dense graph regime,
$p \le 1- \Omega(1)$  and  $p=n^{-o(1)}$, 
\eqref{eq:asym_dense_sharp_lower_bound} implies
\begin{align}
        npt=\omega(1),
        \qquad
        t=n^{-1+o(1)}.
        \label{eq:asym_dense_npt}
\end{align}

For fixed $\pi,\widetilde\pi\in\Sn$, define
\(
        \sigma\coloneqq \pi^{-1}\circ\widetilde\pi.
\)
Let $n_k$ (resp. $N_k$) denote the number of $k$-cycles of $\sigma$ (resp. $\sigma^{\rm E}$, the induced edge permutation of $\sigma$).  In particular,
\(
        N_1=\binom{n_1}{2}+n_2.
\)
For $i<j$, define
\begin{align}
        X_{ij}
        \coloneqq
        \ell(A_{ij},B_{\pi(i)\pi(j)})
        \cdot \ell(A_{ij},B_{\widetilde\pi(i)\widetilde\pi(j)}),
        \label{eq:asym_dense_Xij}.
\end{align}
where  $\ell$ is the single-edge likelihood ratio given in
\eqref{eq:asym_single_edge_lr}.

Let $\mathcal O$ be the collection of edge orbits of the induced edge
permutation, and for $O\in\mathcal O$ set
\(
        X_O\coloneqq \prod_{\{i,j\}\in O}X_{ij}.
\)
As in the orbit calculation of \Cref{lem:asym_orbit_calculation}, applied
after changing measure by $L_\pi$, for every edge orbit $O$,
\begin{align}
        \Expect_{\Q}[X_O]
        =
        1+\rho_{\rm e}^{2|O|}.
        \label{eq:asym_dense_orbit_expectation}
\end{align}

For $S\subset[n]$ and a candidate alignment $\pi$, define
\[
        e_A(S)\coloneqq \sum_{\{i,j\}\subset S}A_{ij},
        \qquad
        e_{B^\pi}(S)\coloneqq
        \sum_{\{i,j\}\subset S}B_{\pi(i)\pi(j)},
        \qquad
        e_{A\wedge B^\pi}(S)
        \coloneqq
        \sum_{\{i,j\}\subset S}A_{ij}B_{\pi(i)\pi(j)}.
\]

By \eqref{eq:asym_single_edge_lr}, it follows that 
\(
        \ell(a,b)=C_{00} \, C_{10}^{a} \, C_{01}^{b} \, C_{11}^{ab} \, ,
\)
where
\begin{align}
        C_{00}
        &=
        1+\frac{p(1-p)s_1s_2}{(1-ps_1)(1-ps_2)}
        =
        \exp\{(1+o(1)) \cdot p(1-p)s_1s_2\},
        \label{eq:C00_bnd} \\
        C_{10}
        &=
        1-\frac{s_2(1-p)}{1-p(s_1+s_2-s_1s_2)}
        =
        \exp\{-(1+o(1)) \cdot s_2(1-p)\},
       \label{eq:C10_bnd}\\
        C_{01}
        &=
        1-\frac{s_1(1-p)}{1-p(s_1+s_2-s_1s_2)}
        =
        \exp\{-(1+o(1)) \cdot s_1(1-p)\},
        \label{eq:C01_bnd}\\
        C_{11}
        &%= \frac1p\frac1{C_{00}C_{10}C_{01}}
        =\frac{1-p(s_1+s_2-s_1s_2)}{p(1-s_1)(1-s_2)}
        =
        (1+o(1)) \cdot \frac1p.
        \label{eq:C11_bnd}
\end{align}
The second equality in \eqref{eq:C11_bnd} uses the assumption $\max\{s_1,s_2\}\to 0.$
Let $F$ denote the set of fixed points of
$\sigma=\pi^{-1}\circ\widetilde\pi$, and let $\mathcal O_1$ denote the
collection of length-one edge orbits inside $F$.  Then by
\eqref{eq:asym_dense_Xij},
\begin{align}
        \prod_{O\in\mathcal O_1}X_O
        &=
        \prod_{\{i,j\}\subset F}X_{ij}
        =
        C_{00}^{2\binom{n_1}{2}}
        C_{10}^{2e_A(F)}
        C_{01}^{2e_{B^\pi}(F)}
        C_{11}^{2e_{A\wedge B^\pi}(F)}.
        \label{eq:asym_dense_X_OF_ER}
\end{align}
Note that $C_{11}$ can be much larger than $C_{00},C_{10}$ and $C_{01}$.
Thus, when $e_{A\wedge B^\pi}(F)$ is atypically large,
$\prod_{\{i,j\}\subset F}X_{ij}$ becomes enormously large, driving the
unconditional second moment to explode.  Hence, we truncate
$\prod_{\{i,j\}\subset F}X_{ij}$ by conditioning on the maximum possible
value of $e_{A\wedge B^\pi}(F)$ under the planted model when $|F|=n_1$ is
large.

Specifically, for $2\le k\le n$, define
\begin{align}
        \zeta(k)
        \coloneqq
        \binom{k}{2}pt
        \exp\left\{
            1+
            W\left(
                \frac{2\log(2en/k)}{e(k-1)pt}
                -
                \frac1e
            \right)
        \right\},
        \label{eq:asym_dense_zeta_def}
\end{align}
where $W$ is the principal branch of the Lambert $W$ function.   Define the set
\begin{align}
        \calE
        \coloneqq
        \bigcap_{S\subset[n]:\, I_pn\le |S|\le n}\calE_S,
        \label{eq:asym_dense_calE}
\end{align}
where for each $S\subset[n]$,
\begin{align}
\calE_S& \triangleq \bigg\{  (A, B, \pi):  e_A(S) \ge \binom{|S| }{2} ps_1 - \sqrt{2 \binom{|S|}{2}  ps_1 |S| \log \frac{2en}{ |S|}  },\nonumber \\
&\hspace{1cm} e_B(S) \ge \binom{|S| }{2} ps_2 - \sqrt{2 \binom{|S|}{2}  ps_2 |S| \log \frac{2en}{ |S|}  },~~~
 e_{A\wedge B^{\pi}}(S) \le   \zeta \left( |S| \right) \bigg \}
 \label{eq:cond_high_prob_dense_ER}.
\end{align}
We shall consider $\P$ conditioned on the event $\{(A,B,\pistar)\in \calE\}.$

\medskip 

\begin{lemma}
\label{lem:asym_dense_cond_high_prob}
Under the planted law,
\(
        \P\big((A,B,\pistar)\in \calE\big)
        =
        1-e^{-\Omega(I_pn)}
        =
        1-o(1).
\)
\end{lemma}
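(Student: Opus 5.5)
The plan is a union bound over all sets $S$ with $|S|=k$ for $I_pn\le k\le n$, applied separately to the three defining conditions of $\calE_S$ in \eqref{eq:cond_high_prob_dense_ER}. Each count is Binomial under the planted law. Given $\pistar=\pi$, $e_A(S)\sim\Bin(\binom{k}{2},ps_1)$ and $e_B(S)\sim\Bin(\binom k2,ps_2)$. Here $e_B(S)$ should be read with the relabeling $B^{\pistar}$, which is itself an $\ER(n,ps_2)$ graph. Finally, $e_{A\wedge B^{\pistar}}(S)\sim\Bin(\binom k2,pt)$, since the true intersection graph $H^\star$ is $\ER(n,pt)$. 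The number of sets of size $k$ is $\binom nk\le (en/k)^k$. For each size $k$ it therefore suffices to show that each failure probability is at most $(en/k)^{-k}(2)^{-k}$, or something similar. Summing over $k\ge I_pn$ then gives a geometric-type total of order $e^{-\Omega(I_pn)}$. Since $p=n^{-o(1)}$ and $p\le 1-\Omega(1)$, we have $I_pn\to\infty$, so this is $o(1)$.

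For the two lower-tail conditions I would use the Chernoff lower-tail bound $\P(\Bin(M,q)\le Mq-x)\le \exp(-x^2/(2Mq))$. With $M=\binom k2$ and $x=\sqrt{2Mq\,k\log(2en/k)}$ this gives exactly $(2en/k)^{-k}$. Multiplying by $\binom nk\le (en/k)^k$ leaves $2^{-k}$, and $\sum_{k\ge I_pn}2^{-k}=e^{-\Omega(I_pn)}$. This is the reason for the factor $2e$ inside the logarithm.

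For the upper-tail condition I would use the Chernoff bound $\P(\Bin(M,q)\ge \zeta)\le \exp\{-Mq\,h(\zeta/(Mq))\}$ with $h(x)=x\log x-x+1$. The function $\zeta(k)$ in \eqref{eq:asym_dense_zeta_def} is designed to solve $Mq\,h(\zeta/(Mq))=2k\log(2en/k)$ with $Mq=\binom k2 pt$. To verify this, set $x=\zeta/(Mq)=\exp\{1+W(y)\}$ with $y=\frac{2\log(2en/k)}{e(k-1)pt}-\frac1e$. Then $x\log x - x = e^{1+W}W = e\,y$ because $We^W=y$. Hence $h(x)=ey+1=\frac{2\log(2en/k)}{(k-1)pt}$, and multiplying by $\binom k2 pt$ gives $k\log(2en/k)$. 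So the failure probability is at most $(2en/k)^{-k}$, and the same union bound again yields $2^{-k}$. Since $y\ge -1/e$, the principal branch is well defined, and $x\ge1$ holds because $W(y)\ge -1$, so the upper-tail Chernoff bound applies.

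There is no real obstacle beyond the algebraic check that $\zeta$ inverts the Chernoff exponent, which I would verify first using the Lambert identity above. Beyond that, only care with constants is needed: the exponent computed above is $k\log(2en/k)$, without the factor $2$, and that is exactly enough to absorb $\binom nk$. Combining the three bounds, and summing over the three conditions and over $k\ge I_pn$, gives $\P((A,B,\pistar)\notin\calE)\le 3\sum_{k\ge I_pn}2^{-k}=e^{-\Omega(I_pn)}$. Because $I_pn=n^{1-o(1)}\to\infty$, this is $o(1)$.
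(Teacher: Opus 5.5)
Your proposal is correct and follows the paper's proof essentially step for step. The paper also conditions on $\pistar$ and applies the Chernoff lower-tail bound to $e_A(S)$ and $e_{B^{\pistar}}(S)$, and the upper-tail bound to $e_{A\wedge B^{\pistar}}(S)$, at failure level $\delta=(2en/k)^{-k}$; it then notes that the upper-tail threshold equals $\zeta(k)$ and union bounds over the $\binom nk\le(en/k)^k$ sets and over $k\ge I_pn$, giving $3\sum_{k\ge I_pn}2^{-k}=e^{-\Omega(I_pn)}$. The only slip is your opening claim that $\zeta$ solves $Mq\,h(\zeta/(Mq))=2k\log(2en/k)$. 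Your own Lambert-$W$ computation correctly gives $k\log(2en/k)$, which is exactly what the union bound needs.
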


\begin{proof}
Fix an integer $I_pn\le k\le n$ and let $M=\binom{k}{2}$.  Let
\[
        t_1=\sqrt{2Mps_1\log(1/\delta)},
        \qquad
        t_2=\sqrt{2Mps_2\log(1/\delta)},
        \qquad
        t'
        =
        Mpt
        \exp\left\{
            1+
            W\left(
                \frac{\log(1/\delta)}{eMpt}
                -
                \frac1e
            \right)
        \right\},
\]
for a parameter $\delta$ to be specified later.  Fix a subset
$S\subset[n]$ with $|S|=k$.  Under the planted law, conditioned on
$\pistar$,
\[
        e_A(S)\sim\Bin(M,ps_1),
        \qquad
        e_{B^{\pistar}}(S)\sim\Bin(M,ps_2),
        \qquad
        e_{A\wedge B^{\pistar}}(S)\sim\Bin(M,pt).
\]
By the Chernoff lower-tail bound, with probability at least $1-2\delta$, it holds that
\(
        e_A(S)\ge Mps_1-t_1,
\)
and
\(
        e_{B^{\pistar}}(S)\ge Mps_2-t_2.
\)
Moreover, the Chernoff bound also yields, with probability at least $1-\delta$,
\(
        e_{A\wedge B^{\pistar}}(S)\le t'.
\)

There are $\binom nk \leq (en/k)^k$ subsets $S$ with $|S|=k$.  Choose
\(
        1/\delta=\left({2en}/{k}\right)^k.
\)
Then $t'=\zeta(k)$, and a union bound over all $S$ of size $k$ gives failure probability at most $3\cdot 2^{-k}$.  Summing over
$k\ge I_pn$ gives
\[
        \P\big((A,B,\pistar)\not\in \calE\big)
        \le
        3\sum_{k\ge I_pn}2^{-k}
        =
        e^{-\Omega(I_pn)}.
\]
Finally, since $p=n^{-o(1)}$ and $p\le 1-\Omega(1)$, we have
$I_p=n^{-o(1)}$ and hence $I_pn=n^{1-o(1)}$.
\end{proof}

Let $\P'$ denote the planted law conditioned on $\{(A,B,\pistar)\in \calE\}$ and let
\(
        L'\coloneqq \frac{\dd\P'}{\dd\Q}.
\)
By  \Cref{lem:conditional_second_moment}, it is enough to show
\(
        \Expect_\Q[(L')^2]\le 1+o(1).
\)
By \Cref{lem:asym_dense_cond_high_prob},
\begin{align}
        \Expect_\Q[(L')^2]
        &=
        (1+o(1)) \cdot 
        \Expect_{\pi\ci\widetilde\pi} \
        \Expect_\Q
        \Big[
            \prod_{O\in\mathcal O}X_O
            \ind{(A,B,\pi)\in \calE}
            \ind{(A,B,\widetilde{\pi})\in \calE}
        \Big].
        \label{eq:asym_dense_cond_second_moment}
\end{align}

We now fix $\pi,\widetilde\pi$ and separately consider two cases.

\noindent
{\bf Case 1: $n_1\le I_p n$.}
In this case, we drop the indicators and use the unconditional second moment:
\begin{align}
&\Expect_\Q
\Big[
    \prod_{O\in\mathcal O}X_O
            \ind{(A,B,\pi)\in \calE}
            \ind{(A,B,\widetilde{\pi})\in \calE}
\Big]
\le
\Expect_\Q
\Big[
    \prod_{O\in\mathcal O}X_O
\Big]
=
\prod_{O\in\mathcal O}
\left(1+\rho_{\rm e}^{2|O|}\right),
\label{eq:asym_dense_case1}
\end{align}
where the equality follows from \eqref{eq:asym_dense_orbit_expectation}.

\noindent
{\bf Case 2: $n_1>I_p n$.}
In this case,
\begin{align}
\Expect_\Q
\Big[
    \prod_{O\in\mathcal O}X_O
            \ind{(A,B,\pi)\in \calE}
            \ind{(A,B,\widetilde{\pi})\in \calE}
\Big]
& \le
\Expect_\Q
\Big[
    \prod_{O\in\mathcal O}X_O
    \ind{(A,B,\pi)\in \calE_F}
\Big] \nonumber \\
& =
\prod_{O\notin\mathcal O_1}
\Expect_\Q[X_O]\,
\Expect_\Q
\Big[
    \prod_{O\in\mathcal O_1}X_O
    \ind{(A,B,\pi)\in \calE_F}
\Big]
\nonumber\\
&=
\prod_{O\notin\mathcal O_1}
\left(1+\rho_{\rm e}^{2|O|}\right) \cdot 
\Expect_\Q
\Big[
    \prod_{\{i,j\}\subset F}X_{ij}
    \ind{(A,B,\pi)\in \calE_F}
\Big].
\label{eq:asym_dense_case2_factor}
\end{align}
Here $\calE_F$ denotes the event $\calE_S$   with $S=F$.  The inequality follows from $\calE\subseteq\calE_F$ when $|F|=n_1>I_pn$.  The factorization follows because $X_O$ is a function of the variables $(A_{ij},B_{\pi(i)\pi(j)})_{\{i,j\}\in O}$; these variables are independent across distinct edge orbits under $\Q$, and the event ${(A,B,\pi)\in \calE_F}$ depends only on the variables belonging to the length-one edge orbits inside $F$.

Let $M=\binom{n_1}{2}$.  Under $(A,B,\pi) \in \calE_F$, we have
\begin{align} \label{eq:Mps}
        e_A(F)\ge (1+o(1))Mps_1
        \hspace{0.5cm}  \mbox{and} \hspace{0.5cm}
        e_{B^\pi}(F)\ge (1+o(1))Mps_2.
\end{align}
Indeed, for $i\in\{1,2\}$,
\begin{align}
        \frac{n_1\log(n/n_1)}{Mps_i}
        &=
        \frac{2\log(n/n_1)}{(n_1-1)ps_i}
        =
        O\left(
            \frac{\log(1/I_p)}{I_pnps_i}
        \right)
        =
        \Theta\left(
            \frac1{np^2s_i}
        \right)
        =
        o(1),
        \label{eq:asym_dense_edge_count_concentration}
\end{align}
where we used $n_1\ge I_pn$, the relation
$(1/I_p)\log(1/I_p)=\Theta(1/p)$, and $np^2s_i=\omega(1).$   Moreover, under $\calE_F$,
\(
        e_{A\wedge B^\pi}(F)\le \zeta(n_1).
\)

Let
\begin{align}
        \gamma\equiv \gamma(n_1)
        \coloneqq
        \frac{2\log(2en/n_1)}{(n_1-1)pt}.
        \label{eq:asym_dense_gamma}
\end{align}
Then
\(
        \zeta(n_1)
        =
        Mpt
        \exp\big\{
            1+
            W\left({\gamma-1}/{e}\right)
        \big\}.
\)
The following estimate is the Lambert-$W$ estimate used in~\cite{wu2023testing}, with $s^2$ replaced by $t=s_1s_2$.

\medskip 

\begin{lemma}
\label{lem:asym_dense_W_function}
For $n_1\ge I_pn$, the following hold.
\begin{itemize}
\item If $\gamma=o(1)$, then
\(
        \zeta(n_1)=(1+o(1))Mpt.
\)
\item If $\gamma=\Theta(1)$, then
\(
        \zeta(n_1)=\Theta(Mpt).
\)
In particular, uniformly for $n_1\ge I_pn$,
\(
        \zeta(n_1)=o(Mt).
\)
\item If $\gamma=\omega(1)$, then
\(
        \zeta(n_1)
        \le
        (e+o(1))Mpt\,\frac{\gamma}{\log\gamma}.
\)
In particular, uniformly for $n_1\ge I_pn$,
\(
        \zeta(n_1)=o(Mt).
\)
\end{itemize}
\end{lemma}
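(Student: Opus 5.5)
The plan is to reduce everything to elementary properties of the principal branch $W$ on $[-1/e,\infty)$, namely $W(-1/e)=-1$, continuity, monotonicity, $e^{W(y)}=y/W(y)$ and $W(y)\sim\log y$ as $y\to\infty$. These are combined with a uniform upper bound on $\gamma(n_1)$ over the range $n_1\ge I_pn$ that comes from the normalization \eqref{eq:asym_dense_sharp_lower_bound}. Throughout I write $\zeta(n_1)=Mpt\,g(\gamma)$ with $g(\gamma)\coloneqq\exp\{1+W((\gamma-1)/e)\}$. Since $\gamma>0$, the argument $(\gamma-1)/e$ exceeds $-1/e$, so $W\ge -1$ and $g\ge 1$; moreover $g$ is continuous and increasing in $\gamma$.

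\textbf{The three asymptotic regimes.} If $\gamma=o(1)$, then $(\gamma-1)/e\to-1/e$, so $W\to-1$ by continuity and $g(\gamma)=1+o(1)$. This gives $\zeta(n_1)=(1+o(1))Mpt$. If $\gamma=\Theta(1)$, the argument stays in a compact subset of $(-1/e,\infty)$, so $g(\gamma)=\Theta(1)$ and $\zeta(n_1)=\Theta(Mpt)$. If $\gamma=\omega(1)$, put $y=(\gamma-1)/e\to\infty$ and use $e^{W(y)}=y/W(y)$ to write $g(\gamma)=(\gamma-1)/W(y)$. Since $W(y)=(1+o(1))\log y=(1+o(1))\log\gamma$, this yields $g(\gamma)\le(1+o(1))\gamma/\log\gamma$, which is stronger than the stated factor $(e+o(1))\gamma/\log\gamma$.

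\textbf{The uniform bounds $\zeta(n_1)=o(Mt)$.} This is the step that needs care. A bound of the form $\zeta\le(1+o(1))Mpt$ is not enough on its own, because $p$ need not tend to $0$. The plan is therefore to show that the regimes $\gamma=\Theta(1)$ and $\gamma=\omega(1)$ can only occur when $p\to0$, and to control $p\gamma/\log\gamma$ directly.

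1. Observe that $\gamma(n_1)$ is decreasing in $n_1$, so over $n_1\ge I_pn$ it is maximized at $n_1\approx I_pn$.
2. At $n_1\approx I_pn$, using $ntI_p=(2-\epsilon)\log n$, I expect
\[
\gamma\le(1+o(1))\frac{2\log(2e/I_p)\,I_p}{p(2-\epsilon)\log n}=O\!\left(\frac{\log^2(1/p)}{\log n}\right),
\]
where I use $I_p/p=\log(1/p)-1+p$ and $\log(1/I_p)=O(\log(1/p))$ for $p\le1-\Omega(1)$. A separate check near $p\approx 1-\Omega(1)$ may be needed here, where $\log(1/p)$ is bounded.
3. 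Since $p=n^{-o(1)}$ gives $\log(1/p)=o(\log n)$, step 2 yields $\gamma=o(\log(1/p))$ uniformly over the range.
4. Hence $\gamma=\Theta(1)$ or $\gamma=\omega(1)$ forces $\log^2(1/p)=\Omega(\log n)\to\infty$, that is, $p\to0$.
5. In the $\Theta(1)$ case, $\zeta=\Theta(Mpt)=o(Mt)$ follows at once from $p\to0$.
6. In the $\omega(1)$ case, $\zeta/(Mt)\le(e+o(1))\,p\gamma/\log\gamma\le p\cdot o(\log(1/p))\to0$, because $p\log(1/p)\to0$ as $p\to0$.

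Uniformity in $n_1$ comes from the fact that the only $n_1$-dependence enters through the monotone quantity $\gamma$, which is bounded above by its value at $n_1=I_pn$. The main obstacle I anticipate is step 2: tracking the constants in $\log(2en/n_1)$ and $(n_1-1)$ versus $n_1$ at the boundary $n_1=I_pn$, so that the bound $\gamma=O(\log^2(1/p)/\log n)$ holds uniformly.
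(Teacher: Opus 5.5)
Your treatment of the three regimes via $g(\gamma)=\exp\{1+W((\gamma-1)/e)\}$ is correct; in the $\omega(1)$ case you even get the constant $1$ in place of $e$. Bounding $\gamma$ by its value at the left endpoint $n_1=I_pn$ is also the right idea. The paper gives no argument of its own here and simply defers to \cite[Lemma~7]{wu2023testing} with $s^2\mapsto t$.

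The problem is step~2: your displayed bound carries a spurious factor $I_p$ in the numerator, so it is not an upper bound. For $n_1\ge I_pn$ one has $\log(2en/n_1)\le\log(2e/I_p)$. By \eqref{eq:asym_dense_sharp_lower_bound}, $(n_1-1)pt\ge(1-o(1))\,p\,(I_pnt)=(1-o(1))(2-\epsilon)\,p\log n$. Hence, uniformly in $n_1\ge I_pn$,
\[
\gamma(n_1)\le(1+o(1))\,\frac{2\log(2e/I_p)}{(2-\epsilon)\,p\log n},
\]
which exceeds your expression by the factor $1/I_p$. You appear to have combined the numerator at $n_1=I_pn$ with the denominator $npt$ that belongs to $n_1=n$.

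As a result step~3 is false. Take $p=1/\log^2 n$, which satisfies $p=n^{-o(1)}$. Then $\gamma(\lceil I_pn\rceil)$ is of order $\log n\log\log n$, while $\log(1/p)=2\log\log n$. So $\gamma=o(\log(1/p))$ fails badly; your bound would even place this case in the regime $\gamma=o(1)$. The chain $p\gamma/\log\gamma\le p\cdot o(\log(1/p))$ in step~6 is therefore unsupported as written.

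The repair is short and actually simplifies steps~4--6. Since $I_p/p=\log(1/p)-1+p$ is decreasing in $p$ and $p\le1-c$, we have $I_p/p\ge\log\frac{1}{1-c}-c>0$. Hence $\log(2e/I_p)\le\log(1/p)+O(1)$, and the corrected bound gives, uniformly in $n_1\ge I_pn$,
\[
p\,\gamma(n_1)=O\Big(\frac{1+\log(1/p)}{\log n}\Big)=o(1),
\]
because $p=n^{-o(1)}$ means $\log(1/p)=o(\log n)$. The remaining steps then follow directly:
\begin{itemize}
\item If $\gamma\ge c_1>0$, then $p\le p\gamma/c_1=o(1)$, so $\zeta=\Theta(Mpt)=o(Mt)$.
\item If $\gamma=\omega(1)$, then $\zeta/(Mt)=p\,g(\gamma)\le(1+o(1))\,p\gamma/\log\gamma\le(1+o(1))\,p\gamma=o(1)$.
\end{itemize}
The same estimate removes your worry about $p$ bounded away from $0$. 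There $\gamma=O(1/\log n)$ on the whole range, so only the first regime occurs. With this correction your argument becomes a complete, self-contained proof of what the paper obtains by citation.
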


\begin{proof}
This is \cite[Lemma 7]{wu2023testing}, with the substitution
$s^2\mapsto t=s_1s_2$.  The proof only uses the definition
\eqref{eq:asym_dense_zeta_def}, the dense-regime condition
$p=n^{-o(1)}$, and the boundary relation
\eqref{eq:asym_dense_sharp_lower_bound}.
\end{proof}

 In view of \Cref{lem:asym_dense_W_function}, we get that
\(
 \zeta(n_1)\le Mpt+o(Mt)
\)
for all 
\(
n_1\ge I_pn.
\)
For ease of notation, we henceforth write $\zeta(n_1)$ simply as $\zeta$.
It follows from \eqref{eq:asym_dense_X_OF_ER} that
\begin{align}
    \Expect_\Q
    \Big[
        \prod_{\{i,j\}\subset F}X_{ij}
        \ind{(A,B,\pi)\in \calE_F}
    \Big]
    &=
    C_{00}^{2M}
    \Expect_\Q
    \left[
        C_{10}^{2e_A(F)}
        C_{01}^{2e_{B^\pi}(F)}
        C_{11}^{2e_{A\wedge B^\pi}(F)}
        \ind{(A,B,\pi)\in \calE_F}
    \right]
    \nonumber\\
    &\le
    C_{00}^{2M}
    C_{10}^{(2+o(1))Mps_1}
    C_{01}^{(2+o(1))Mps_2}
    \Expect_\Q
    \left[
        C_{11}^{2e_{A\wedge B^\pi}(F)}
        \ind{e_{A\wedge B^\pi}(F)\le \zeta}
    \right]
    \nonumber\\
    &=
    \exp\{-(2+o(1))Mpt(1-p)\}
    \Expect_\Q
    \left[
        C_{11}^{2e_{A\wedge B^\pi}(F)}
        \ind{e_{A\wedge B^\pi}(F)\le \zeta}
    \right],
    \label{eq:asym_dense_fixed_point_pre_mgf}
\end{align}
where the last equality uses \eqref{eq:C00_bnd}-\eqref{eq:C01_bnd} and \eqref{eq:Mps}.
Let
\[
        u\coloneqq C_{11}^2=(1+o(1))p^{-2}.
\]
Under $\Q$, we have
\(
        e_{A\wedge B^\pi}(F)\sim\Bin(M,p^2t).
\)
Then, for any $\lambda\in[0,1]$,
\begin{align}
    &\Expect_\Q
    \left[
        C_{11}^{2e_{A\wedge B^\pi}(F)}
        \ind{e_{A\wedge B^\pi}(F)\le \zeta}
    \right]
    \le
    \Expect_\Q
    \left[
        u^{\lambda e_{A\wedge B^\pi}(F)+(1-\lambda)\zeta}
    \right]
    =
    u^{(1-\lambda)\zeta}
    \left(1+p^2t(u^\lambda-1)\right)^M.
    \label{eq:asym_dense_holder}
\end{align}
Optimizing over $\lambda\in[0,1]$, equivalently over $y=u^\lambda\in[1,u]$, gives
\begin{align}
    \Expect_\Q
    \left[
        C_{11}^{2e_{A\wedge B^\pi}(F)}
        \ind{e_{A\wedge B^\pi}(F)\le \zeta}
    \right]
    \le
    \left(
        \frac{M(1-p^2t)}{M-\zeta}
    \right)^M
    \left(
        \frac{\zeta(1-p^2t)}{u p^2t(M-\zeta)}
    \right)^{-\zeta}.
    \label{eq:asym_dense_mgf_optimized}
\end{align}
Combining
\eqref{eq:asym_dense_fixed_point_pre_mgf} and
\eqref{eq:asym_dense_mgf_optimized}, we obtain
\begin{align}
    \Expect_\Q
    \Big[
        \prod_{\{i,j\}\subset F}X_{ij}
        \ind{(A,B,\pi)\in \calE_F}
    \Big]
    &\le
    \exp\left\{
        -(2+o(1))Mpt(1-p)
        +
        M\log\frac{M(1-p^2t)}{M-\zeta}
        +
        \zeta\log
        \frac{(M-\zeta)up^2t}{\zeta(1-p^2t)}
    \right\}
    \nonumber\\
    &\le
    \exp\left\{
        -Mpt(2-p)
        +
        \zeta\log\frac{eMt}{\zeta}
        +
        o(\zeta)
    \right\}.
    \label{eq:asym_dense_fixed_point_bound}
\end{align}
The last inequality is the same simplification as in~\cite[Appendix A.3]{wu2023testing}: it uses $\zeta=Mt(p+o(1))$, $u=(1+o(1))p^{-2}$, $(M-\zeta)\log(1-p^2t)=-(1+o(1))Mp^2t$, $\log(up^2)=o(1)$, and $h(x)=x\log(e/x)+o(x)$ as $x=o(1)$.

Combining the two cases yields
\begin{align*}
        \Expect_\Q[(L')^2]
        & \le
        (1+o(1)) \cdot 
        \Expect_\sigma
        \left[
            \prod_{O\in\mathcal O}
            \left(1+\rho_{\rm e}^{2|O|}\right)
            \ind{n_1\le I_pn}
        \right]
        \\
        & +
        (1+o(1)) \cdot 
        \Expect_\sigma
        \Big[
            \prod_{O\notin\mathcal O_1}
            \left(1+\rho_{\rm e}^{2|O|}\right)
            \exp\left\{
                -Mpt(2-p)
                +
                \zeta\log\frac{eMt}{\zeta}
                +
                o(\zeta)
            \right\}
            \ind{n_1>I_pn}
        \Big],
\end{align*}
where $\sigma$ is uniform on $\Sn$ and $M=\binom{n_1}{2}$ in the second
term.

Recall that $\rho_{\rm e}^2\le t$.  Since $t=n^{-1+o(1)}$,
\[
        \prod_{k\ge3}(1+\rho_{\rm e}^{2k})^{N_k}
        \le
        \exp\left\{\rho_{\rm e}^6\sum_{k\ge3}N_k\right\}
        \le
        \exp\{n^2\rho_{\rm e}^6/2\}
        =
        1+o(1).
\]
Thus
\begin{align}
        \prod_{O\notin\mathcal O_1}
        \left(1+\rho_{\rm e}^{2|O|}\right)
        &=
        (1+o(1))(1+\rho_{\rm e}^2)^{n_2}
        (1+\rho_{\rm e}^4)^{N_2}
        \le
        (1+o(1))
        \exp\{tn_2+t^2N_2\}.
        \label{eq:asym_dense_orbits_not_fixed}
\end{align}
Similarly,
\[
        \prod_{O\in\mathcal O_1}
        \left(1+\rho_{\rm e}^{2|O|}\right)
        =
        (1+\rho_{\rm e}^2)^{\binom{n_1}{2}}
        \le
        \exp\{ tn_1^2/2 \} \, .
\]
Therefore,
\begin{align}
        \Expect_\Q[(L')^2]
        &\le
        (1+o(1))
        \Expect_\sigma
        \left[
            \exp\left\{
                \frac{tn_1^2}{2}
                +
                tn_2
                +
                t^2N_2
            \right\}
            \ind{n_1\le I_pn}
        \right]
        \nonumber\\
        &\quad+
        (1+o(1))
        \Expect_\sigma
        \left[
            \exp\left\{
                tn_2
                +
                t^2N_2
                -
                Mpt(2-p)
                +
                \zeta\log\frac{eMt}{\zeta}
                +
                o(\zeta)
            \right\}
            \ind{n_1>I_pn}
        \right].
        \label{eq:asym_dense_two_terms}
\end{align}

The remaining summation over the random permutation is identical to the corresponding part of~\cite{wu2023testing}, after replacing $s^2$ by $t=s_1s_2$.

\medskip

\begin{lemma}
\label{lem:asym_dense_wxy_tail}
Assuming $p \le 1- \Omega(1)$  and  $p=n^{-o(1)}$ and
\eqref{eq:asym_dense_sharp_lower_bound},
\begin{align}
        \Expect_\sigma
        \left[
            \exp\left\{
                \frac{tn_1^2}{2}
                +
                tn_2
                +
                t^2N_2
            \right\}
            \ind{n_1\le I_pn}
        \right]
        &\le
        1+o(1),
        \label{eq:asym_dense_wxy_tail_first}
        \\
        \Expect_\sigma
        \left[
            \exp\left\{
                tn_2
                +
                t^2N_2
                -
                Mpt(2-p)
                +
                \zeta\log\frac{eMt}{\zeta}
                +
                o(\zeta)
            \right\}
            \ind{n_1>I_pn}
        \right]
        &=
        o(1).
        \label{eq:asym_dense_wxy_tail_second}
\end{align}
\end{lemma}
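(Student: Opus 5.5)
We need the cycle statistics of a uniformly random permutation $\sigma\in\Sn$: $n_1$ is approximately Poisson$(1)$ but has tails $\P(n_1\ge k)\le 1/k!$. Also $n_2\le n/2$, and $N_2=\binom{n_2}{2}\cdot 2+n_1n_2+2n_4\le n_2^2+n_1n_2+n^2/?$, which is $O(n^2)$. Since $t=n^{-1+o(1)}$, we have $t^2N_2=O(n^2t^2)=n^{o(1)}$. That is too crude, so I will use finer information. The $n_2$ cycles are approximately Poisson$(1/2)$ with factorial-moment tails, and $N_2$ counts 2-cycles of the edge permutation, namely $N_2=n_1n_2+2\binom{n_2}{2}+2n_4$. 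The plan follows the argument of \cite{wu2023testing} (Appendix A), with $s^2$ replaced by $t$ throughout; only $t$, $p$ and \eqref{eq:asym_dense_sharp_lower_bound} enter.

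\textbf{First bound \eqref{eq:asym_dense_wxy_tail_first}.} Since $n_1\le I_pn$ and $I_p\le p(\log(1/p))$, write $tn_1^2/2\le n_1\cdot tI_pn/2=n_1(2-\epsilon)\log n/2$. This gives $e^{tn_1^2/2}\le n^{(1-\epsilon/2)n_1}$. I will use the standard bound $\P(n_1=k)\le 1/k!$ together with conditional independence: given $n_1=k$, the remaining permutation is a uniform derangement-like permutation on $n-k$ points, and its $n_2,n_4$ have factorial moments bounded by those of Poisson$(1/2)$, $1/4$. The exponent terms $tn_2$ and $t^2(n_1n_2+n_2^2+n_4)$ are $o(1)$ per unit and $n^{-1+o(1)}$-small. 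Hence $\Expect[e^{tn_2+t^2N_2}\mid n_1=k]\le e^{o(1)+O(k t^2 \cdot\ldots)}$, which amounts to $1+o(1)$ uniformly in $k\le I_pn$ up to a factor $e^{o(k)}$. The sum then becomes
\begin{align*}
\sum_{k\ge 0}\frac{1}{k!}\,n^{(1-\epsilon/2)k}e^{o(k)}\,\cdot(\text{correction}),
\end{align*}
and this is not small as written. I therefore need the sharper split used by Wu--Xu--Yu. For small $k$ (say $k\le \sqrt{n}$, where $tk^2=o(1)$), the term is $1+o(1)$ after summing $\sum_k e^{-1}/k!\cdot e^{tk^2/2}$. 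For large $k$ I use the exact $\P(n_1=k)\le 1/k!\approx (e/k)^k$. Then $e^{tk^2/2}/k!\le\exp\{k(tk/2-\log(k/e))\}$, and with $k\le I_pn$ we get $tk/2\le (1-\epsilon/2)\log n\cdot k/(I_pn)\cdot$, which is at most $(1-\epsilon/2)\log(n)\,(k/(I_pn))$. I need this to be less than $\log(k/e)-\Omega(1)$ for $k\ge\sqrt n$. Since $k/(I_pn)\le 1$ and $\log k\ge \tfrac12\log n$, this fails near the top; the correct comparison uses $\log(k/e)$ versus $tk/2$ with $t\approx 2\log n/(I_pn)$, at $k=I_pn=n^{1-o(1)}$. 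There $tk/2\approx(1-\epsilon/2)\log n<\log k -\Omega(\log n)$ fails only by $o(\log n)$, and the $\epsilon$ slack absorbs it. This boundary case is the delicate point.

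\textbf{Second bound \eqref{eq:asym_dense_wxy_tail_second}.} For $n_1=k>I_pn$, the exponent $-Mpt(2-p)+\zeta\log(eMt/\zeta)$ is analyzed via \Cref{lem:asym_dense_W_function} in the three regimes of $\gamma$. When $\zeta=(1+o(1))Mpt$, the exponent becomes $-Mpt(2-p)+Mpt(1+\log(1/p))+o(Mpt)=Mpt(\log(1/p)-1+p)+o(\cdot)=MI_pt$. This is weighed against $\P(n_1=k)\le(e/k)^k$, which gives the net exponent $k[(k-1)tI_p/2-\log(k/e)]$. With $ntI_p=(2-\epsilon)\log n$ this is at most $k[(1-\epsilon/2)\log n-\log k+O(1)]$, which is $-\Omega(k\log n)$ only once $k\ge n^{1-\epsilon/4}$. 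For $I_pn<k<n^{1-\epsilon/4}$, the regime is $\gamma=\omega(1)$ or $\Theta(1)$, where $\zeta\log(eMt/\zeta)$ is much smaller than $MI_pt$. By Lemma \ref{lem:asym_dense_W_function}, $\zeta\le eMpt\gamma/\log\gamma$, so this term is $O(k\log n\cdot\log\log/\log\gamma)=o(k\log k)$, and the $(e/k)^k$ factor dominates. The $tn_2+t^2N_2$ terms are handled as before, where $t^2n_1n_2\le t^2 n n_2=o(n_2)$. Summing $\sum_{k>I_pn}e^{-\Omega(k)}=o(1)$ completes the argument. The main obstacle is the uniform bookkeeping in the intermediate $\gamma$ regimes; I would follow \cite[Appendix A]{wu2023testing} line by line.
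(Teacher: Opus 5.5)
Your overall route is the paper's: import the permutation-summation step of \cite[Appendix A.3]{wu2023testing} with $s^2\to t$, weigh $\P(n_1=k)\le 1/k!$ against the exponent, and control $\zeta$ through the $\gamma$-regimes of \Cref{lem:asym_dense_W_function}. However, the case analysis you write for \eqref{eq:asym_dense_wxy_tail_second} has a genuine gap.

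Since $I_p=n^{-o(1)}$, for large $n$ we have $I_pn>n^{1-\epsilon/4}$. So the range $I_pn<k<n^{1-\epsilon/4}$, to which you assign the regimes $\gamma=\Theta(1)$ and $\gamma=\omega(1)$, is empty. The values of $k$ at which $\gamma=2\log(2en/k)/((k-1)pt)$ is not $o(1)$ lie in $[I_pn,\beta n]$, where $\beta=\log^2(npt)/(npt)\to0$. Moreover $\beta n=n^{1-o(1)}$, because $npt=(2-\epsilon)\log n/(\log\frac1p-1+p)=O(\log n)$. These $k$ are therefore inside your case $k\ge n^{1-\epsilon/4}$, where you used $\zeta=(1+o(1))Mpt$; that formula holds only when $\gamma=o(1)$.

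The split must be by $\gamma$, i.e. by $k$ relative to $\beta n$ and $\beta'n$, as in the paper. In the regimes $\gamma\neq o(1)$ one needs $\zeta\log\frac{eMt}{\zeta}+o(\zeta)=o(k\log n)$ uniformly, so that $-\log k!\le-(1-o(1))k\log n$ dominates. Your justifications for this do not hold up:
\begin{itemize}
\item For $\gamma=\Theta(1)$, the term is of the same order as $MtI_p$, not much smaller. What actually helps is $k/n\le\beta\to0$, whence $MtI_p\le(1-\frac{\epsilon}{2})\frac{k}{n}\,k\log n=o(k\log n)$.
\item For $\gamma=\omega(1)$, a bound of the form $O(k\log n\cdot\log\log/\log\gamma)$ is not $o(k\log k)$ when $\gamma$ grows slowly.
\end{itemize}
The correct input is $Mpt\gamma=k\log\frac{2en}{k}$ together with $\zeta\ge Mpt$. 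These give $\zeta\log\frac{eMt}{\zeta}\le\zeta\log\frac{e}{p}\le(e+o(1))\,k\log\frac{2en}{k}\log\frac{e}{p}\big/\log\gamma$. One then verifies deterministically, using $p=n^{-o(1)}$, that this is $o(k\log n)$ for $I_pn\le k\le\beta'n$. That verification is exactly the estimate from \cite[Appendix A.3, Case~2(c)]{wu2023testing} which the paper imports, and it is missing from your argument.

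For \eqref{eq:asym_dense_wxy_tail_first} your sketch is repairable but contains a false step: $tk^2=o(1)$ fails for $k\le\sqrt n$, since $t=(2-\epsilon)\log n/(nI_p)\ge(2-\epsilon)\log n/n$. To fix it:
\begin{itemize}
\item Take $k_0\to\infty$ with $tk_0^2\to0$ for the small-$k$ range.
\item For $k_0\le k\le I_pn$, use $e^{tk^2/2}/k!\le e^{k g(k)}$ with $g(k)=\frac{tk}{2}-\log\frac{k}{e}$.
\item Since $g$ is convex, $g\le\max\{g(k_0),g(I_pn)\}$, where $g(k_0)\to-\infty$ and $g(I_pn)=\frac12 ntI_p-\log(I_pn)+1=-(\frac{\epsilon}{2}-o(1))\log n$.
\end{itemize}
So there is no failure ``by $o(\log n)$'' at the top endpoint; the margin is $\frac{\epsilon}{2}\log n$. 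This is precisely the condition $\mu b+\nu+2-\log b\le0$ that the paper checks, with $\mu=t/2$, $\nu=0$, $\tau=t$, $a=0$, $b=I_pn$, before invoking \cite[Proposition~2, Eq.~(48)]{wu2023testing}. A harmless slip: $N_2=n_1n_2+2\binom{n_2}{2}+n_4$, not $+2n_4$.
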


\begin{proof}
This is the final permutation-summation step in
\cite[Appendix A.3]{wu2023testing}, with $s^2$ replaced everywhere by
$t=s_1s_2$ and with $I_p$ in place of
$p(\log(1/p)-1+p)$.  We briefly indicate why the cited argument applies
without further changes.

The first bound is obtained from \cite[Proposition~2, Eq.~(48)]{wu2023testing}
with
\[
        \mu=t/2,\qquad
        \nu=0,\qquad
        \tau=t,\qquad
        a=0,\qquad
        b=I_p n.
\]
Indeed, $b=n^{1-o(1)}=\omega(1)$ and, by
\eqref{eq:asym_dense_sharp_lower_bound},
\[
        \mu b+\nu+2-\log b
        =
        \frac12ntI_p+2-\log(I_pn)
        =
        -\frac{\epsilon+o(1)}2\log n+2
        \le0.
\]

For the second bound,~\cite{wu2023testing} split the range $I_pn\le n_1\le n$ into
\[
        \beta n\le n_1\le n,\qquad
        \beta'n\le n_1\le\beta n,\qquad
        I_pn\le n_1\le\beta'n,
\]
where
\[
        \beta=\frac{\log^2(npt)}{npt},
        \qquad
        \beta'=\frac{\log(npt)}{100\,npt}.
\]
By \Cref{lem:asym_dense_W_function}, which is
\cite[Lemma~7]{wu2023testing} with $s^2$ replaced by $t$, the estimates
for $\zeta$ in these three ranges are identical to those in
\cite[Appendix A.3]{wu2023testing}.  The subsequent applications of
\cite[Proposition~2, Eq.~(47)]{wu2023testing} are also unchanged after the
same substitution.  In the last range, the only additional input is the deterministic estimate
\[
        \max_{I_pn\le k\le\beta'n}\psi(k)=o(\log(I_pn)),
\]
proved in \cite[Appendix A.3, Case~2(c)]{wu2023testing}; that calculation
uses only the boundary relation
\eqref{eq:asym_dense_sharp_lower_bound}, the assumption $p=n^{-o(1)}$,
and the product $t=s_1s_2$, not $s_1$ and $s_2$ separately.  Therefore the
second expectation is $o(1)$, as claimed.
\end{proof}

Combining \eqref{eq:asym_dense_two_terms} with
\Cref{lem:asym_dense_wxy_tail}, we obtain
\(
        \Expect_\Q[(L')^2]\le 1+o(1).
\)
Together with \Cref{lem:asym_dense_cond_high_prob} and \Cref{lem:conditional_second_moment}, this completes the proof of \Cref{thm:asym_er_weak_detection_converses-iii}.

\section{Converse proofs for alignment in the asymmetric setting}
\label{app:asym_alignment_proofs}
\label{app:converses_asym_recovery}

\subsection{Proof of~\texorpdfstring{\Cref{thm:asym_dd_recovery-ii}}{} }
\label{app:asym_dd_recovery}

We shall explain how the proof of the alignment converse in~\cite[Theorem 1.1, Eq. (1.3)]{ding2023densesubgraph} can be extended to prove the asymmetric version,~\texorpdfstring{\Cref{thm:asym_dd_recovery-ii}}{}.  We do not attempt to summarize the proof; we only describe the necessary minor modifications.
We use the notation 
\(
        t\coloneqq s_1s_2,
\)
and
\(
        \lambda_n\coloneqq npt,
\)
and the single-edge likelihood ratio $\ell$ given by
\eqref{eq:asym_single_edge_lr}.  Throughout, set $\lambda_\alpha\coloneqq \varrho^{-1}(1/\alpha)$ and assume
\[
        p=n^{-\alpha+o(1)},
        ~~~~~~ 
        \alpha\in(0,1),
        ~~~~~~
       \max\{s_1, s_2\} \to 0 ,
        ~~~~~~
        \lambda_n\leq \lambda_\alpha-\epsilon.
\]
The proof of~\cite{ding2023densesubgraph} goes through with minor changes by replacing $s^2$ by $t=s_1s_2.$  In particular all the lemmas and propositions in~\cite[Section 3 and the Appendix]{ding2023densesubgraph} hold in the asymmetric case with only minor changes in some proofs, as we describe.

The proof of~\cite[Proposition 3.2]{ding2023densesubgraph} needs to be slightly modified as follows.
In the asymmetric case, the posterior probability distribution of $\pistar$ given in ~\cite[Eq. (3.14)]{ding2023densesubgraph} becomes (in their notation)
\begin{align*}
    \frac{{\cal Q}[\pi, G , \G]}{\P[G,\G]} =
    \frac 1 {n!} \prod_{e\in E_0} \ell(G_e,\G_{\Pi(e)}) = 
    \frac {C_{11}^{|\calE_{\pi}|} C_{10}^{|E|} C_{01}^{|\E|}C_{00}^{\binom n 2}}{n!}
\end{align*}
where the $C_{ij}$ are defined in our Appendix~\ref{app:asym_dense_detection}.
This expression propagates in a straightforward way to the definition of $f(G,\G,A,\sigma)$ in~\cite{ding2023densesubgraph}.
Another minor change in the proof of~\cite[Proposition 3.2]{ding2023densesubgraph} is that the justification for $\TV(\P,\Q)\to 0$ should be changed to our \Cref{thm:asym_er_weak_detection_converses-ii}, suitable for unequal subsampling probabilities. 

The only other proposition in~\cite[Section 3]{ding2023densesubgraph} that requires a proof modification is~\cite[Proposition 3.6]{ding2023densesubgraph}. In the appendix of~\cite{ding2023densesubgraph}, the proposition is split into two parts, namely Propositions A.7 and A.8.    The proof of Proposition A.7 is given in the supplementary materials
of~\cite{ding2023densesubgraph} (incorrectly labeled ``Proof of Proposition A.1'').  This is where the constants $C_{ij}$ come in.   The proof goes through as before with $s^2$ replaced by $t=s_1s_2$ because 
\[
    C_{11} = (1 + o(1))\frac 1 p,
    ~~~~
    C_{10}\leq 1,
    ~~~~
    C_{01}\leq 1,
    ~~~~ 
    \mbox{and }
    C_{00}^{\binom K 2} = (1+O(pt))^{\binom K 2} = e^{o(K)}
\]
(whereas the original proof uses the facts $P = (1 + o(1))\frac 1 p$,
$Q\leq 1$, $R^{\binom K 2} = e^{o(K)}$ and $K = n^\beta$ for the variables $P,Q,R,K$ defined in 
\cite{ding2023densesubgraph}.)

It remains to specify changes to proofs in the appendix of~\cite{ding2023densesubgraph} for the proof of Proposition A.8.
Only one proposition in the appendix needs a proof modification -- that is ~\cite[Proposition A.9]{ding2023densesubgraph}.  The proof goes through in the asymmetric case with the help of \Cref{lem:asym_dd_cond_orbit} 
of our paper. In particular, our~\eqref{eq:asym_conditional_orbit} yields~\cite[Eq. (A4)]{ding2023densesubgraph} and the fact 
$\prod_{e\in O}\ell(X^1_e,X^2_{{\pistar}\circ\sigma(e)})=p^{-r}$
for fully occupied orbits $O$ gives~\cite[Eq. (A5)]{ding2023densesubgraph}.

\subsection{Proof of~\texorpdfstring{\Cref{thm:asym_dd_recovery-i,thm:asym_dd_recovery-iii}}{}} 
\label{app:asym_sparse_dense_partial_recovery}

Throughout this subsection, set
\(
        t\coloneqq s_1s_2,
\)
and
\(
        N\coloneqq |\En|=\binom n2 .
\)
By symmetry between the two graphs, we may assume without loss of generality that
\(
        s_1\ge s_2 .
\)
Let
\[
        A\coloneqq X^1,
        \qquad
        B\coloneqq X^2,
        \qquad
        q\coloneqq ps_1,
        \qquad
        r\coloneqq ps_2,
        \qquad
        s\coloneqq s_2 .
\]
Thus \(r\le s\).  For a permutation \(\pi\in\Sn\) and vertex pair $e \in \En$, define
\(
        (A^\pi)_e \coloneqq A_{\pi^{-1}(e)},
\)
so that, under the planted law, the pairs
\((A^{\pistar}_e,B_e)_{e\in\En}\) are independent.

For \(\theta\in[r,s]\), define an interpolating edge law \(P_\theta\) by
\[
        P_\theta(1,1)=q\theta,
        \qquad
        P_\theta(1,0)=q(1-\theta),
        \qquad
        P_\theta(0,1)=r-q\theta,
        \qquad
        P_\theta(0,0)=1-q-r+q\theta .
\]
Equivalently, under $\P_{\theta},$ $A$ and $B$ have marginals \(\Bern(q)\) and \(\Bern(r)\) and 
\[
        \P_\theta(B=1\mid A=1)=\theta,
        \qquad
        \P_\theta(B=1\mid A=0)
        =
        \eta_\theta
        \coloneqq
        \frac{r-q\theta}{1-q}.
\]
At \(\theta=r\), \(A\) and \(B\) are independent.  
At \(\theta=s\), we get the planted
asymmetric \erdosrenyi\ law with parameters \(p,s_1,s_2\).

Let \(I_s(A,B;\pistar)\) denote the mutual information between $\pi$ and the observation $(A,B)$, at the endpoint $\theta=s$. Further, let $I_{\rm e}$ denote the one-edge mutual information between one correctly aligned pair under the law $P_s$. Finally let
\[
        \operatorname{mmse}_\theta(A^{\pistar})
        \coloneqq
        \Expect_\theta
        \left[
            \left\|
                A^{\pistar}
                -
                \Expect_\theta[A^{\pistar}\mid A,B]
            \right\|^2
        \right]
\]
denote the minimum mean square error (MMSE) of estimating $A^{\pistar}$ based on $(A,B)$ distributed according to $P_{\theta}$.

\medskip

\begin{lemma}[Asymmetric area theorem]
\label{lem:asym_recovery_area}
\[
\begin{aligned}
        I_s(A,B;\pistar)
        \le\,
        N I_{\rm e}
        + N q s^2
        +
        \int_r^s
        \frac{\theta-r}{s(1-q)^2}
        \left(
            \operatorname{mmse}_\theta(A^{\pistar})
            -
            Nq(1-q)
        \right)
        \dd\theta .
\end{aligned}
\]
\end{lemma}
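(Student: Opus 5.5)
This is the I-MMSE area argument of \cite{wu2022settling}, with the Gaussian channel replaced by the interpolating edge law $P_\theta$. The plan is to write $I_\theta := I_\theta(A,B;\pistar)$ and compare $I_s$ with $I_r$. At $\theta=r$ the graphs $A$ and $B$ are independent, so $I_r=0$, and
\[
I_s=\int_r^s \frac{\dd}{\dd\theta} I_\theta\,\dd\theta .
\]
The first step is the chain rule. Because the marginal of $(A,B)$ does not determine $\pistar$ except through correlation,
\[
I_\theta(A,B;\pistar)=I_\theta(A^{\pistar},B;\,\cdot\,)\text{-type decomposition}=H_\theta(A,B\mid\pistar)\text{ complement},
\]
that is, $I_\theta = N\, I_{\rm e}(\theta) - I_\theta(A^{\pistar};A,B\mid B)$-style residual. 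Concretely, I would use the identity
\[
I_\theta(A,B;\pistar)=N\,D\big(P_\theta\,\|\,\Bern(q)\otimes\Bern(r)\big)-\Big(H_\theta(A^{\pistar}\mid A,B)-\ldots\Big),
\]
obtained from $I(A,B;\pistar)=I(A^{\pistar},B;A,B)$ minus redundancy. This reduces the claim to bounding the derivative of a conditional entropy term.

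The second step differentiates in $\theta$. The per-edge law $P_\theta$ is affine in $\theta$: its $(1,1)$ and $(0,0)$ entries have derivative $+q$, and its $(1,0)$ and $(0,1)$ entries have derivative $-q$. Hence
\[
\frac{\dd}{\dd\theta}\log P_\theta(a,b)=q\,\frac{(2a-1)(2b-1)}{P_\theta(a,b)}.
\]
Differentiating $\Expect_\theta[\log \frac{\dd P}{\dd Q}]$ for the marginal likelihood then gives a Fisher-type identity. The derivative of $N\cdot$(one-edge MI) minus $\frac{\dd}{\dd\theta}I_\theta$ equals a sum over edges of posterior variances of the correlation statistic $(A^{\pistar}_e-q)(B_e-\eta_\theta)$.

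To express this in terms of the MMSE of $A^{\pistar}$, I use that $B_e$ is observed. The statistic is linear in $A^{\pistar}_e$ given $B$, and $\theta-\eta_\theta=(\theta-r)/(1-q)$. Together these should yield the weight $\frac{\theta-r}{s(1-q)^2}$ multiplying $\operatorname{mmse}_\theta(A^{\pistar})-Nq(1-q)$. The subtracted term $Nq(1-q)$ is the trivial MMSE, i.e.\ the prior variance. The factor $1/s$ comes from bounding $P_\theta(1,1)^{-1}$-type denominators using $\theta\le s$.

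The third step integrates this from $r$ to $s$. The one-edge term integrates to $N I_{\rm e}$. The error from replacing exact denominators by $s(1-q)^2$, together with the terms quadratic in $B_e$ when $B_e=1$ (which occur with probability $r\le s$), is collected into the slack $Nqs^2$. This uses $(\theta-r)\le s$ and $\Pr{B_e=1}\le s$.

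I expect the main obstacle to be the derivative computation in the second step, specifically getting the correct sign and an inequality rather than an equality. Unlike the Gaussian I-MMSE identity, the binary channel gives an exact derivative involving $\Expect[\,\cdot\mid A,B]$ of products $A^{\pistar}_eB_e$. I would first try to write $\frac{\dd}{\dd\theta}I_\theta$ exactly via the score
\[
\sum_e \frac{q(2A^{\pistar}_e-1)(2B_e-1)}{P_\theta(A^{\pistar}_e,B_e)},
\]
and then apply Jensen/Cauchy–Schwarz edgewise to upper bound it by the MMSE expression, absorbing the $B_e=1$ contributions into $Nqs^2$.
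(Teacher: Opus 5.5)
\paragraph{The gap: the derivative step.} Your skeleton matches the paper's: interpolate over $\theta\in[r,s]$, use $I_r=0$, differentiate, compare the derivative with the MMSE, integrate, and charge the $Nqs^2$ slack to edges with $B_e=1$. Your auxiliary facts, $\partial_\theta\log P_\theta(a,b)=q(2a-1)(2b-1)/P_\theta(a,b)$ and $\theta-\eta_\theta=(\theta-r)/(1-q)$, are also correct. But the step that carries the proof is wrong.

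You assert that $\frac{\dd}{\dd\theta}\big(N I_{\rm e}(\theta)-I_\theta\big)$ equals $\sum_e$ of the posterior variances of $(A^{\pistar}_e-q)(B_e-\eta_\theta)$. This is false. At $\theta=r$ the left side is $0$: $g'(r)=0$, and $\frac{\dd}{\dd\theta}I_\theta=0$ there, since $I_\theta\ge 0$ is smooth near $r$ and vanishes at $r$. The right side is $\sum_e\Expect[(B_e-r)^2\Var(A^{\pistar}_e\mid A,B)]>0$. What actually drives the derivative is the spread of a posterior \emph{mean} (prior variance minus a posterior variance), weighted by $\theta-r$; it is large when the MMSE is small. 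Taken literally, your identity puts $\operatorname{mmse}_\theta$ into the bound with the wrong sign and without the factor $\theta-r$.

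Your fallback does not supply a mechanism either. You propose writing $\frac{\dd}{\dd\theta}I_\theta$ through your score $S$ and applying Jensen or Cauchy--Schwarz edgewise. But that derivative is $\Expect_\theta\big[S\log\frac{\dd\P_{A,B\mid\pistar}}{\dd\P_{A,B}}\big]$, a correlation with an information density built from the non-product mixture $\P_{A,B}$. It does not split over edges, and you identify no inequality in the required direction. Your first step also does not state a usable identity.

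\paragraph{The missing idea: leave-one-out differentiation.} The paper differentiates one edge at a time, as in \cite{deshpande2017asymptotic}.
\begin{itemize}
\item Since $A$ is independent of $\pistar$ and $A-A^{\pistar}-B$ is Markov, $I_\theta(A,B;\pistar)=I_\theta(A^{\pistar};B\mid A)=H_\theta(B\mid A)-N H_\theta(B_e\mid A^{\pistar}_e)$. Hence $\frac{\dd}{\dd\theta}I_\theta=Ng'(\theta)+\frac{\dd}{\dd\theta}H_\theta(B\mid A)$, with $g(\theta)=D(P_\theta\Vert\Bern(q)\otimes\Bern(r))$.
\item Each channel $P_\theta(b_e\mid a_e)$ is affine in $\theta$. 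Differentiating edge by edge gives $\frac{\dd}{\dd\theta}H_\theta(B\mid A)=\sum_e\Expect_\theta[\partial_\theta y_e\,h'(y_e)]$. Here $h$ is the binary entropy, $x_e^\circ=\Expect_\theta[A^{\pistar}_e\mid A,B_{-e}]$, and $y_e=\P_\theta(B_e=1\mid A,B_{-e})=r+\frac{\theta-r}{1-q}(x_e^\circ-q)$.
\item Using $\Expect_\theta[x_e^\circ]=q$ and a mean-value expansion of $h'$ about $r$, one gets $-\Expect_\theta[\partial_\theta y_e\,h'(y_e)]\ge\frac{\theta-r}{s(1-q)^2}\Var_\theta(x_e^\circ)$. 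This is where $1/s$ enters: the intermediate point lies between $y_e$ and $r$, so it is at most $\theta\le s$. It does not come from $P_\theta(1,1)^{-1}$, and this one-sided bound costs no slack.
\item The only slack comes from a Bayes-rule comparison of $\widehat x_e=\Expect_\theta[A^{\pistar}_e\mid A,B]$ with $x_e^\circ$. This gives $\Expect_\theta[\widehat x_e^2]\le q^2+\Var_\theta(x_e^\circ)+2sq$, i.e. $\sum_e\Var_\theta(x_e^\circ)\ge Nq(1-q)-\operatorname{mmse}_\theta(A^{\pistar})-2Nsq$.
\item Integrating $2Nsq\cdot\frac{\theta-r}{s(1-q)^2}$ over $[r,s]$ yields $Nq(s-r)^2/(1-q)^2\le Nqs^2$, using $r\ge qs$.
\end{itemize}
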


\begin{proof}
We follow \cite[Proof of Proposition~3]{wu2022settling}, recording the changes caused by asymmetric subsampling.  Let $X=A^{\pistar}$, and define
\[
        g(\theta)
        \coloneqq
        D\!\left(P_\theta\middle\Vert
        \Bern(q)\otimes\Bern(r)\right)
        =q\,d(\theta\Vert r)+(1-q)d(\eta_\theta\Vert r),
\]
where \(d(\cdot\Vert\cdot)\) is binary relative entropy.  Thus
\(g(r)=0\) and \(g(s)=I_{\rm e}\).  Since \(A - A^{\pistar} - B\) forms a Markov chain, we have
\(
        I_\theta(A,B;\pistar)
        =I_\theta(A^{\pistar};B\mid A).
\)

For \(e\in\En\), let $B_{-e}$ denote the vector $B$ excluding $B_e$ and define
\[
        x_e^\circ
        \coloneqq
        \Expect_\theta[X_e\mid B_{-e},A],
        \qquad
        \widehat x_e
        \coloneqq
        \Expect_\theta[X_e\mid B,A],
\]
and set
\[
        y_e
        \coloneqq
        P_\theta(B_e=1\mid B_{-e},A)
        =
        r+\frac{\theta-r}{1-q}(x_e^\circ-q).
\]
Writing \(h(x)=-x\log x-(1-x)\log(1-x)\), differentiation of the conditional entropy $H_{\theta}(A^{\pistar} | A,B)$ using methods similar to \cite[Lemma 7.1]{deshpande2017asymptotic} gives
\[
        I_s(A,B;\pistar)
        =
        N I_{\rm e}
        -\int_r^s \mathcal I_\theta\,\dd\theta,
        ~~~~ \mbox{where} ~~~~
        \mathcal I_\theta
        =-
        \sum_{e\in\En}
        \Expect_\theta\left[
            \frac{\partial y_e}{\partial\theta}h'(y_e)
        \right].
\]
This is the entropy-derivative identity in~\cite[Proof of Proposition~3]{wu2022settling}\footnote{This derivation applies verbatim to the asymmetric case, since it does not use symmetry between $A$ and $B$. Therefore, the reader is directed to~\cite[eq. (24)--(26)]{wu2022settling} for the full derivation.}; here
\[
        \frac{\partial y_e}{\partial\theta}
        =\frac{x_e^\circ-q}{1-q},
        \qquad
        y_e-r=\frac{\theta-r}{1-q}(x_e^\circ-q).
\]
By the mean-value theorem and the fact $\Expect_{\theta}\big[\frac{\partial y_e}{\partial \theta}\big]=0$, for some \(\xi_e\) between \(y_e\) and \(r\),
\begin{align*}
        -\Expect_\theta\left[
            \frac{\partial y_e}{\partial\theta}h'(y_e)
        \right]
        & =
        \frac{\theta-r}{(1-q)^2}
        \Expect_\theta\left[
            \frac{(x_e^\circ-q)^2}{\xi_e(1-\xi_e)}
        \right] 
        \ge
        \frac{\theta-r}{s(1-q)^2}
        \Var_\theta(x_e^\circ).
\end{align*}
Here \(\Expect_\theta[x_e^\circ]=q\), and
\(\eta_\theta\le y_e,r\le\theta\le s\), so \(\xi_e\le s\).  Consequently,
\[
        \mathcal I_\theta
        \ge
        \sum_{e\in\En}
        \frac{\theta-r}{s(1-q)^2}
        \Var_\theta(x_e^\circ).
\]
It remains to relate the variance of $x_e^{\circ}$ to the MMSE.  Bayes' rule gives
\[
        \widehat x_e
        =
        \begin{cases}
        \displaystyle
        \frac{(1-\theta)x_e^\circ}
        {1-\eta_\theta-x_e^\circ(\theta-\eta_\theta)},
            & B_e=0,\\[1.2em]
        \displaystyle
        \frac{\theta x_e^\circ}
        {\eta_\theta+x_e^\circ(\theta-\eta_\theta)},
            & B_e=1.
        \end{cases}
\]
Since \(\eta_\theta\le\theta\le s\),
\[
        \widehat x_e
        \le
        (1-B_e)x_e^\circ
        +
        B_e\min\left\{1,\frac{s}{\eta_\theta}x_e^\circ\right\}.
\]
Conditionally on \(X_e\), the variables \(B_e\) and \(x_e^\circ\) are
independent.  Hence, using \(\theta\le s\) and
\(\min\{1,z\}^2\le z\),
\[
\begin{aligned}
&\Expect_\theta\left[
 B_e\min\left\{1,\frac{s}{\eta_\theta}x_e^\circ\right\}^2
 \right]
 \le s \, \Expect_\theta[X_e]+s \, \Expect_\theta[x_e^\circ]
 =2sq,\\
&\Expect_\theta[(x_e^\circ)^2]
 =q^2+\Var_\theta(x_e^\circ).
\end{aligned}
\]
Since $B_e(1-B_e)\equiv 0$, it follows that
\(
        \Expect_\theta[\widehat x_e^2]
        \le q^2+\Var_\theta(x_e^\circ)+2sq.
\)
Therefore
\[
\begin{aligned}
        \operatorname{mmse}_\theta(A^{\pistar})
        &=
        \sum_{e\in\En}
        \Expect_\theta
        \left[
            (A^{\pistar}_e-\widehat x_e)^2
        \right] 
        =
        \sum_{e\in\En}
        \left(q-\Expect_\theta[\widehat x_e^2]\right)            \ge
        Nq(1-q)
        -
        \sum_{e\in\En}\Var_\theta(x_e^\circ)
        -
        2Nsq .
\end{aligned}
\]
Combining the last display with the lower bound on \(\mathcal I_\theta\)
and integrating yields the claimed integral term.  The remaining term is
\[
        2Nsq\int_r^s
        \frac{\theta-r}{s(1-q)^2}\,\dd\theta
        =Nq\frac{(s-r)^2}{(1-q)^2}
        \le Nqs^2,
\]
where the last inequality follows from \(r\ge qs\).
\end{proof}

\medskip 

\begin{lemma}[MMSE-to-overlap conversion]
\label{lem:asym_recovery_mmse_overlap}
There is a universal constant \(C<\infty\) such that the following holds.  If,
for some \(0\le\xi\le1\),
\[
        \operatorname{mmse}_\theta(A^{\pistar})
        \ge
        Nq(1-q)(1-\xi),
\]
then every estimator \(\widehat\pi=\widehat\pi(A,B)\) satisfies
\[
        \Expect_\theta[\ov(\pistar,\widehat\pi)]
        \le
        C\left\{
            (q+\xi)^{1/4}
            +
            \left(
                \frac{n\log n}{Nq}
            \right)^{1/4}
        \right\}.
\]
\end{lemma}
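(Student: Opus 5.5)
This follows the MMSE-to-overlap conversion in \cite[Proposition~4 / Lemma~5]{wu2022settling}, with $q=ps_1$ as the density of $A^{\pistar}$. The idea is contrapositive: a good estimator $\widehat\pi$ would let us build a good estimator of $X=A^{\pistar}$, contradicting the MMSE lower bound.

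Fix $\widehat\pi=\widehat\pi(A,B)$ and use the plug-in estimator $\widetilde X\coloneqq A^{\widehat\pi}$. Since $\widetilde X$ is a function of $(A,B)$,
\[
\operatorname{mmse}_\theta(A^{\pistar}) \le \Expect_\theta\|A^{\pistar}-A^{\widehat\pi}\|^2 = 2\Expect_\theta|A| - 2\Expect_\theta\langle A^{\pistar},A^{\widehat\pi}\rangle .
\]
Here $\Expect_\theta|A|=Nq$. Combining this with the hypothesis $\operatorname{mmse}_\theta\ge Nq(1-q)(1-\xi)$ gives
\[
\Expect_\theta\langle A^{\pistar},A^{\widehat\pi}\rangle \le \tfrac12 Nq(1+q+\xi).
\]
Because the MMSE uses a single scale, it is cleaner to apply the argument to a rescaled estimator $c\,A^{\widehat\pi}$ and optimize over $c$. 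This yields the stronger statement
\[
\Expect\langle A^{\pistar},A^{\widehat\pi}\rangle \lesssim Nq\bigl(q+\sqrt{\xi}\bigr)+\text{fluctuation terms},
\]
which is the form needed below.

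Next, lower-bound $\langle A^{\pistar},A^{\widehat\pi}\rangle$ in terms of the overlap. Let $F$ be the set of vertices $i$ with $\widehat\pi(i)=\pistar(i)$, and set $|F|=\ov\cdot n$. Every edge $e$ with both endpoints in $(\pistar)^{-1}(F)$ satisfies $A^{\widehat\pi}_e=A^{\pistar}_e$, so
\[
\langle A^{\pistar},A^{\widehat\pi}\rangle \ge e_A(\text{a set of size } |F|).
\]
We then need a uniform lower bound on edge counts: with high probability, simultaneously for every $S$ with $|S|=k$,
\[
e_A(S)\ge \binom k2 q - O\Bigl(\sqrt{k^2 q\, k\log n}+k\log n\Bigr).
\]
This follows from a Chernoff bound plus a union bound over $\binom nk\le n^k$ sets. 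Hence, on this event,
\[
\langle A^{\pistar},A^{\widehat\pi}\rangle \ge \tfrac12 \ov^2 n^2 q - O\bigl(n^{3/2}\sqrt{q\log n}+n\log n\bigr).
\]
The complementary event has probability $o(1)$, and its contribution is at most $Nq$ times that probability.

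To finish, take expectations and use Jensen in the form $\Expect[\ov]^2\le \Expect[\ov^2]$. This gives
\[
\Expect[\ov]^2 \lesssim q+\sqrt{\xi}+\sqrt{\frac{n\log n}{Nq}}+\frac{n\log n}{Nq}.
\]
Taking square roots produces the $(q+\xi)^{1/4}$-type term (after absorbing $\sqrt q\le q^{1/4}$ and $\xi^{1/4}$) and the $(n\log n/(Nq))^{1/4}$ term, where the constant $C$ also absorbs the low-probability bad event.

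\textbf{Main obstacle.} The delicate step is the first one: turning the MMSE hypothesis into an upper bound on the correlation $\Expect\langle A^{\pistar},A^{\widehat\pi}\rangle$ of order $Nq(q+\sqrt\xi)$, rather than the useless bound $Nq/2$. I would follow \cite{wu2022settling} exactly here. The key is the orthogonality of the posterior-mean error, $\Expect\langle X-\widehat X,\,f(A,B)\rangle=0$, applied with $f=A^{\widehat\pi}-q\mathbf 1$. Cauchy–Schwarz then gives
\[
\bigl|\Expect\langle X-q\mathbf 1,\, A^{\widehat\pi}-q\mathbf 1\rangle\bigr| \le \sqrt{\Expect\|\widehat X-q\mathbf 1\|^2\cdot Nq},
\]
and $\Expect\|\widehat X-q\mathbf 1\|^2 = Nq(1-q)-\operatorname{mmse}\le Nq\,\xi$. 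This yields
\[
\Expect\langle A^{\pistar},A^{\widehat\pi}\rangle \le Nq^2+Nq\sqrt{\xi}+O(Nq^2),
\]
which is the required bound. Nothing in this step uses $s_1=s_2$; only the marginal $\Bern(q)$ of $A$ enters.
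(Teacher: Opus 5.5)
Your proof is correct, but it is organized differently from the paper's proof. The paper does no work of its own here. It notes that $\Expect\|A\|^2=Nq$ and rewrites the hypothesis as $\operatorname{mmse}_\theta(A^{\pistar})\ge \Expect\|A\|^2(1-\widetilde\xi)$ with $\widetilde\xi=q+\xi-q\xi\le q+\xi$. It then applies \cite[Proposition~4]{wu2022settling} to the \emph{uncentered} vector $A$ as a black box, so the term $(q+\xi)^{1/4}$ is simply that proposition's $\widetilde\xi^{1/4}$.

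You instead re-derive the content of that proposition in three steps: MMSE $\Rightarrow$ correlation bound; correlation $\Rightarrow$ overlap, via a union-bounded lower tail for $e_A(S)$ over all vertex subsets; then Jensen. Your orthogonality step uses the centered test function $A^{\widehat\pi}-q\mathbf 1$ and gives $\Expect\langle A^{\pistar},A^{\widehat\pi}\rangle\le Nq^2+Nq(1-q)\sqrt{\xi}$. This is slightly sharper than the $Nq\sqrt{\widetilde\xi}$ from the uncentered route, with $\sqrt q$ in place of $q^{1/4}$ before you weaken it. It also removes the need for the $\widetilde\xi$ reparametrization.

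The paper's route buys brevity. Yours is self-contained, and it makes explicit the paper's remark that only the $\mathrm{Bern}(q)$ law of $A$ is used, not equality of the two marginals.

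Three small points to tidy:
\begin{enumerate}
\item Optimizing the scale $c$ in $cA^{\widehat\pi}$ gives $Nq\sqrt{\widetilde\xi}$, not $Nq(q+\sqrt\xi)$ as you state. That weaker bound already suffices, and it is exactly the paper's route.
\item With the convention $(A^\pi)_e=A_{\pi^{-1}(e)}$, the edges on which $A^{\widehat\pi}$ and $A^{\pistar}$ agree are those inside $\pistar(F)$, not $(\pistar)^{-1}(F)$. The conclusion $\langle A^{\pistar},A^{\widehat\pi}\rangle\ge e_A(F)$ is unchanged.
\item Because $C$ must be universal rather than asymptotic, state explicitly that your union bound gives failure probability at most $1/n$. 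This is at most $n\log n/(Nq)$, so it is absorbed into the second term; saying only $o(1)$ is not enough.
\end{enumerate}
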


\begin{proof}
Since \(\Expect\|A\|^2=Nq\) and
\[
        Nq(1-q)(1-\xi)
        =Nq(1-\widetilde\xi),
        \qquad
        \widetilde\xi=q+\xi-q\xi\le q+\xi,
\]
the result follows directly from
\cite[Proposition~4]{wu2022settling}, applied to the uncentered adjacency
vector \(A\).  That proposition uses only the law of \(A\) and the relation
between fixed vertices and fixed edges; it does not use equality of the two
graph marginals.
\end{proof}

\medskip

\begin{lemma}
\label{lem:asym_conditional_chi2_to_kl}
Let \(P_{XY}\) be the joint law of \((X,Y)\), and suppose $\calE$ is an event independent of $X$ such that $P(\calE) = 1 - \delta.$
If \(P_{Y\mid X}\ll Q_Y\) almost surely, then
\begin{align}
        D(P_Y\Vert Q_Y)
        \le{}&
        \log\!\left(1+\chi^2(P_{Y\mid\mathcal E}\Vert Q_Y)\right)
        \nonumber\\
        &+\delta\left(
            \log\frac1\delta
            +\Expect \big[ 
                D(P_{Y\mid X}\Vert Q_Y)
            \big]
        \right)
        +\sqrt{\delta\,
        \Var\!\left(
            \log\frac{\dd P_{Y\mid X}}{\dd Q_Y}(Y)
        \right)}.
        \label{eq:asym_conditional_chi2_to_kl}
\end{align}
\end{lemma}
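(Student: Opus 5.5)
We will prove \eqref{eq:asym_conditional_chi2_to_kl} by decomposing $P_Y$ as a mixture according to whether $\calE$ occurs:
\[
P_Y=(1-\delta)P_{Y\mid\calE}+\delta P_{Y\mid\calE^c}.
\]
The mixture is then split and each component's divergence bounded separately. The key step is the standard mixture (log-sum / grouping) inequality for KL divergence against a fixed reference $Q_Y$:
\[
D(P_Y\Vert Q_Y)\le (1-\delta)D(P_{Y\mid\calE}\Vert Q_Y)+\delta D(P_{Y\mid\calE^c}\Vert Q_Y)+h(\delta),
\]
which follows from convexity together with the chain rule applied to the joint law of $(\Indc_{\calE},Y)$. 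Here $h(\delta)\le \delta\log\frac1\delta+\delta$; the extra $\delta$ will be absorbed in the last step, or we use the sharper form $(1-\delta)\log\frac{1}{1-\delta}\le\delta$ if needed.

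\textbf{The good component.} We bound $D(P_{Y\mid\calE}\Vert Q_Y)\le\log(1+\chi^2(P_{Y\mid\calE}\Vert Q_Y))$, using $D\le\log(1+\chi^2)$, a consequence of Jensen's inequality applied to $\log$. The factor $1-\delta\le1$ can simply be dropped.

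\textbf{The bad component.} Because $\calE$ is independent of $X$, the law $P_{Y\mid\calE^c}$ is a mixture over $x$ of $P_{Y\mid X=x,\calE^c}$, with $X$ having its unconditional law. Here I intend to avoid needing $Y$ independent of $\calE$ given $X$. Write
\[
\delta D(P_{Y\mid\calE^c}\Vert Q_Y)=\delta\,\Expect\!\left[\log\frac{\dd P_{Y\mid\calE^c}}{\dd Q_Y}(Y)\,\Big|\,\calE^c\right].
\]
By convexity (log-sum), this is at most $\delta\,\Expect[\log\frac{\dd P_{Y\mid X,\calE^c}}{\dd Q_Y}\mid\calE^c]$. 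I then compare it with $\Expect[\Indc_{\calE^c}\,Z]$, where $Z\coloneqq \log\frac{\dd P_{Y\mid X}}{\dd Q_Y}(Y)$.

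Write $Z=\Expect Z+(Z-\Expect Z)$, and note that $\Expect Z=\Expect[D(P_{Y\mid X}\Vert Q_Y)]$ by definition. Then
\[
\Expect[\Indc_{\calE^c}Z]=\delta\,\Expect Z+\Expect[\Indc_{\calE^c}(Z-\Expect Z)],
\]
and by Cauchy--Schwarz the second term is at most $\sqrt{\delta\Var(Z)}$. This produces exactly the terms $\delta\,\Expect[D(P_{Y\mid X}\Vert Q_Y)]+\sqrt{\delta\Var(Z)}$.

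\textbf{Main obstacle.} The real difficulty is justifying the passage from $\dd P_{Y\mid X,\calE^c}$ to $\dd P_{Y\mid X}$ inside the logarithm. The difference between them is $\log\frac{\dd P_{Y\mid X,\calE^c}}{\dd P_{Y\mid X}}$, whose $P(\cdot\mid\calE^c)$-expectation is a conditional divergence, so this term does not vanish automatically.

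My first attempt is to follow the intended route of \cite{wu2022settling}: use the joint chain rule
\[
D(P_{Y\Indc_\calE}\Vert\cdot)\le D(P_{XY\Indc_{\calE}}\Vert P_X\otimes Q_Y\otimes P_{\Indc_\calE}),
\]
which in effect replaces the bad-part divergence by $\Expect[\Indc_{\calE^c}\log\frac{\dd P_{Y\mid X,\calE^c}}{\dd Q_Y}]$. I would then bound $\log\frac{\dd P_{Y\mid X,\calE^c}}{\dd P_{Y\mid X}}\le\log\frac1\delta$ pointwise, since $P_{Y\mid X,\calE^c}\le P_{Y\mid X}/P(\calE^c\mid X)=P_{Y\mid X}/\delta$, using independence of $\calE$ and $X$. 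This accounts for the $\delta\log\frac1\delta$ term.

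Finally, I would tidy the constant terms so that the $h(\delta)$ from the grouping inequality and this pointwise bound together give the stated $\delta\log\frac1\delta$. I expect this bookkeeping to be delicate, possibly requiring the grouping step to be done at the joint $(X,Y)$ level rather than on $Y$ alone.
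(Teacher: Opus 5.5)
Your route is the right one: it is essentially the proof of \cite[Lemma~1]{wu2022settling}, which the paper simply cites. As written, however, it does not deliver the stated constant.

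\textbf{The gap.} The problem is the grouping inequality with the extra $+h(\delta)$. Your bad component, handled through the pointwise bound $\dd P_{Y\mid X,\calE^c}/\dd P_{Y\mid X}\le 1/\delta$, already produces its own $\delta\log\frac1\delta$. Your final bound is therefore $\log(1+\chi^2(P_{Y\mid\calE}\Vert Q_Y))+h(\delta)+\delta\log\frac1\delta+\delta\,\Expect Z+\sqrt{\delta\Var(Z)}$. That exceeds the statement by $h(\delta)$, which is at least $\delta\log\frac1\delta$. None of the other terms has slack to absorb it, so the remark that ``the extra $\delta$ will be absorbed'' has nothing to absorb it into. (The weaker bound would still suffice for the applications in the appendix, where $\delta=O(n^{-1})$, but it is not the statement.)

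Your suggested remedy of doing the grouping at the $(X,Y)$ level would make things worse. Lifting the good component to $(X,Y)$ replaces $D(P_{Y\mid\calE}\Vert Q_Y)$ by the conditional divergence $\Expect[D(P_{Y\mid X,\calE}\Vert Q_Y)\mid\calE]$. That quantity is of order $\Expect Z$ and is not controlled by $\chi^2(P_{Y\mid\calE}\Vert Q_Y)$.

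\textbf{The fix.} No entropy term is needed in the grouping step at all. Since $P\mapsto D(P\Vert Q_Y)$ is convex, $D(P_Y\Vert Q_Y)\le(1-\delta)D(P_{Y\mid\calE}\Vert Q_Y)+\delta D(P_{Y\mid\calE^c}\Vert Q_Y)$ exactly. Equivalently, $D(P_Y\Vert Q_Y)\le D(P_{Y,\mathbf{1}_{\calE}}\Vert Q_Y\otimes P_{\mathbf{1}_{\calE}})$ by data processing, and the right side equals that convex combination by the chain rule, because the $\mathbf{1}_{\calE}$-marginals coincide. The $h(\delta)$ belongs to the reverse (lower) bound on the divergence of a mixture.

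With this correction the rest of your plan goes through unchanged:
\begin{enumerate}
\item Jensen gives $D(P_{Y\mid\calE}\Vert Q_Y)\le\log(1+\chi^2(P_{Y\mid\calE}\Vert Q_Y))$, and the factor $1-\delta$ can be dropped.
\item For the bad part, convexity over $x$ with $P_{X\mid\calE^c}=P_X$ gives the conditional divergence bound. Combined with $\dd P_{Y\mid X,\calE^c}/\dd P_{Y\mid X}=P(\calE^c\mid X,Y)/\delta\le 1/\delta$, this yields $\delta D(P_{Y\mid\calE^c}\Vert Q_Y)\le\delta\log\frac1\delta+\Expect[\mathbf{1}_{\calE^c}Z]$. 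This is the sole source of the $\delta\log\frac1\delta$ term.
\item Cauchy--Schwarz on $\Expect[\mathbf{1}_{\calE^c}(Z-\Expect Z)]$ finishes.
\end{enumerate}
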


\begin{proof}
This is \cite[Lemma~1]{wu2022settling} and its proof is unchanged for the asymmetric case.
\end{proof}

For later use, let \(\ell\) be the one-edge likelihood ratio at an endpoint with parameters \(p,s_1,s_2\), as defined in~\eqref{eq:asym_single_edge_lr}. 
When \(p\le1-\Omega(1)\) and
\(\max\{s_1,s_2\}=o(1)\), direct expansion gives
\begin{align}
        I_{\rm e}
        &=pt\left(\log\frac1p-1+p\right)
          +O\!\left(pt(s_1+s_2)\right),
        \label{eq:asym_edge_mi_expansion}\\
        \Var_{P_s} \big(\log\ell \big)
        &\le Cpt\left(1+\log^2\frac1p\right).
        \label{eq:asym_edge_llr_variance}
\end{align}
These estimates follow by using the definition of $\ell(a,b)$ for $a,b\in\{0,1\}$, and using
\(\log(1-x)=-x+O(x^2)\).

We will repeatedly use the following consequence of the area theorem.
For the endpoint model with parameters \(p,s_1,s_2\), we may write
\begin{align}
        I_s(A,B;\pistar)
        =
        N I_{\rm e}-\zeta_n ,
        \label{eq:I_s-formula}
\end{align}
where $\zeta_n$ will be appropriately bounded later. Let \(\theta_0=(1-\delta)s\), where \(\delta\in(0,1)\) is fixed and
\(\theta_0>r\).  Note that the MMSE is nonincreasing\footnote{Indeed, if \(r\le\theta_1<\theta_2\le s\), set
\(
        \kappa=(\theta_1-r)/(\theta_2-r).
\)
Starting from the \(\theta_2\)-experiment, pass every \(B_e\) independently
through the binary channel
\[
        \P(B'_e=1\mid B_e=1)=r+\kappa(1-r),
        \qquad
        \P(B'_e=1\mid B_e=0)=r(1-\kappa).
\]
A direct calculation gives
\(
        \P(B'_e=1\mid A^{\pistar}_e=1)=\theta_1
\)
and
\(
        \P(B'_e=1\mid A^{\pistar}_e=0)=\eta_{\theta_1}.
\)
Thus \((A,B')\) has the \(\theta_1\)-law and is a post-processing of the
\(\theta_2\)-experiment, proving the monotonicity.
} in \(\theta\). It follows that the quantity
\[
        \Delta_\theta
        \coloneqq
        Nq(1-q)-\operatorname{mmse}_\theta(A^{\pistar})
\]
is nondecreasing in \(\theta\).  Combining~\eqref{eq:I_s-formula} with
\Cref{lem:asym_recovery_area} gives
\[
        \int_r^s
        \frac{\theta-r}{s(1-q)^2}
        \Delta_\theta\,\dd\theta
        \le
        \zeta_n+Nqs^2 .
\]
Therefore
\[
        \Delta_{\theta_0}
        \int_{\theta_0}^{s}
        \frac{\theta-r}{s(1-q)^2}\,\dd\theta
        \le
        \zeta_n+Nqs^2 .
\]
If \(p\le1-\Omega(1)\), then for fixed sufficiently small \(\delta>0\), we have
\(
        \int_{\theta_0}^{s}
        \frac{\theta-r}{s(1-q)^2}\,\dd\theta
        =
        \Theta(s).
\)
Hence
\begin{align}
\label{eq:asym_common_mmse_lower}
        \operatorname{mmse}_{\theta_0}(A^{\pistar})
        \ge
        Nq(1-q)
        \Big(
            1
            -
            O(s)
            -
            O\Big(\frac{\zeta_n}{Nqs}\Big)
        \Big).
\end{align}

\subsubsection{Sparse regime: proof of~\texorpdfstring{\Cref{thm:asym_dd_recovery-i}}{}}

We will use the following sparse mutual-information input.

\medskip 

\begin{lemma}[Sparse mutual-information input]
\label{lem:asym_sparse_recovery_mi_input}
Assume
\(
        p=n^{-\Omega(1)},
        \max\{s_1,s_2\}\to0,
        np=\omega(\log^2 n),
\)
and
\(
        nps_1s_2\le 1-\epsilon
\)
for a fixed \(\epsilon>0\).  Then
\[
        I_s(A,B;\pistar)
        =
        N I_{\rm e}-\zeta_n,
        \qquad
        \zeta_n=O(\log n).
\]
\end{lemma}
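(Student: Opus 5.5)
The plan is to identify $\zeta_n$ with the divergence $D(\P_{AB}\Vert\Q_{AB})$ and then bound that divergence using the conditional second moment estimate of \Cref{prop:asym_conditional_lr} together with \Cref{lem:asym_conditional_chi2_to_kl}.

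\textbf{Step 1: the identity for $\zeta_n$.} Since $\pistar$ is uniform and $\P_{AB\mid\pistar=\pi}=\P_\pi$, we have $\frac{\dd\P_\pi}{\dd\Q}=L_\pi$. The chain rule for divergence gives
\[
I_s(A,B;\pistar)=\Expect_\P\Big[\log\frac{\dd \P_{\pistar}}{\dd \Q}(A,B)\Big]-D(\P_{AB}\Vert\Q).
\]
Under $\P_{\pistar}$ the aligned pairs are i.i.d.\ with law $P_s$, and $\Q$ is the product of the marginals. Hence the first term equals $\sum_e \Expect_{P_s}[\log\ell]=N I_{\rm e}$. So $\zeta_n=D(\P_{AB}\Vert\Q)\ge 0$, and it remains to show $D(\P\Vert\Q)=O(\log n)$.

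\textbf{Step 2: setting up \Cref{lem:asym_conditional_chi2_to_kl}.} I apply the lemma with $X=\pistar$, $Y=(A,B)$, $Q_Y=\Q$, and $\calE=\mathcal E_{\rm pf}$, the event that $H^\star$ is a pseudoforest.

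\emph{Independence.} Under $\P_{\pistar}$ the graph $H^\star\sim\ER(n,pt)$ for every value of $\pistar$, so $\calE$ is independent of $\pistar$.

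\emph{Size of $\delta$.} Since $npt\le1-\epsilon$, \Cref{lem:asym_er_pseudoforest} gives $\delta=\P(\calE^c)=O(1/n)$.

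\emph{Absolute continuity.} $P_{Y\mid X}\ll Q_Y$ holds because $p<1$ and all four edge probabilities are positive under $\Q$.

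\textbf{Step 3: bounding the three terms.}

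\emph{The $\chi^2$ term.} By the computation in the proof of \Cref{lem:conditional_second_moment}, $1+\chi^2(P_{Y\mid\calE}\Vert\Q)=\Expect_\Q[(L')^2]\le \Expect_\P[\Indc_{\calE}L]/(1-\delta)^2$. \Cref{prop:asym_conditional_lr} bounds this by $G_{\rm pf}(t)/(1-\delta)^2=1+o(1)$, using $t\to0$. So the log term is $o(1)$.

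\emph{The $\delta$ term.} We have $\delta\log(1/\delta)=O(\log n/n)$. Also $\Expect[D(P_{Y\mid X}\Vert\Q)]=NI_{\rm e}$, and by \eqref{eq:asym_edge_mi_expansion}, $NI_{\rm e}\le (1+o(1))\tfrac{n^2}{2}pt\log\tfrac1p$. With $npt\le1$ and $\log\frac1p=O(\log n)$ this gives $NI_{\rm e}=O(n\log n)$. Hence $\delta\,NI_{\rm e}=O(\log n)$.

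\emph{The variance term.} $\log\frac{\dd P_{Y\mid X}}{\dd \Q}(Y)=\sum_e\log\ell(A_e,B_{\pistar(e)})$. Conditioning on $\pistar$, these summands are i.i.d.\ and their conditional mean $NI_{\rm e}$ does not depend on $\pistar$, so the variance is $N\Var_{P_s}(\log\ell)$. By \eqref{eq:asym_edge_llr_variance} this is at most $CNpt\log^2 n=O(n\log^2 n)$. Therefore $\sqrt{\delta\cdot O(n\log^2 n)}=O(\log n)$.

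Adding the three contributions gives $D(\P\Vert\Q)=O(\log n)$, which proves the lemma. The condition $np=\omega(\log^2 n)$ is not needed for this step; it matters only later, in \Cref{lem:asym_recovery_mmse_overlap}.

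\textbf{Main obstacle.} The delicate step is the $\delta$ term. It needs $\delta=O(1/n)$, not merely $o(1)$, so that $\delta$ cancels against $NI_{\rm e}=\Theta(n\log n)$. This is why the hypothesis is the fixed-gap condition $npt\le1-\epsilon$, which gives the $O(n^{-1})$ clause of \Cref{lem:asym_er_pseudoforest}. A weaker condition such as $1-\omega(n^{-1/3})$ would not suffice here.
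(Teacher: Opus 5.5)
Your proposal is correct and follows the paper's proof essentially step for step. Like the paper, you identify $\zeta_n = D(\P_{A,B}\Vert\Q_{A,B})$ via the chain rule and then apply \Cref{lem:asym_conditional_chi2_to_kl} with $X=\pistar$, $Y=(A,B)$ and the pseudoforest event, using $\delta=O(n^{-1})$ from the fixed gap $npt\le 1-\epsilon$, the $\chi^2$ bound from \Cref{prop:asym_conditional_lr}, and the estimates $NI_{\rm e}=O(n\log n)$ and $\Var = O(n\log^2 n)$.

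One small inconsistency: your bound $\log\frac1p=O(\log n)$ comes from $np=\omega(\log^2 n)$, because $p=n^{-\Omega(1)}$ only bounds $\log\frac1p$ from below. So, as written, your closing remark that this hypothesis is unused is inaccurate. It could be made true by instead using $pt\log\frac1p\le pt\log\frac1{pt}\le\frac{\log n}{n}$ for $pt\le\frac1n$, and similarly for the variance term.
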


\begin{proof}
Let \(\Q_{A,B}=\P_A\otimes\P_B\).  Since, conditionally on \(\pistar\),
the \(N\) aligned edge pairs are independent and have one-edge mutual
information \(I_{\rm e}\), the chain rule gives
\[
        I_s(A,B;\pistar)
        =
        N I_{\rm e}-D(\P_{A,B}\Vert\Q_{A,B}).
\]
Thus it remains to show that the last divergence is \(O(\log n)\).

Let \(\mathcal A_{\rm pf}\) be the pseudoforest event from
Appendix~\ref{app:asym_sparse_detection}, and set
\(
        \delta_n=\P(\mathcal A_{\rm pf}^c).
\)
Since \(c_n\leq1-\epsilon\), \Cref{lem:asym_er_pseudoforest}
gives
\(
    \delta_n
    \leq \frac{2}{n\epsilon^3}
    =O(n^{-1}).
\)
Moreover,
\Cref{prop:asym_conditional_lr} gives
\[
        \chi^2(\P_{A,B\mid\mathcal A_{\rm pf}}\Vert\Q_{A,B})
        \le
        \frac{G_{\rm pf}(s_1s_2)}
        {\P(\mathcal A_{\rm pf})^2}-1
        =
        O(s_1s_2+n^{-1})
        =
        o(1).
\]
By permutation equivariance,
\(
        \P(\mathcal A_{\rm pf}\mid\pistar=\pi)
        =\P(\mathcal A_{\rm pf})
\)
for every \(\pi\), so this event is independent of \(\pistar\).
For a fixed alignment, let
\[
        Z
        \coloneqq
        \log\frac{\dd\P_{A,B\mid\pistar}}{\dd\Q_{A,B}}(A,B).
\]
This is a sum of \(N\) independent one-edge log-likelihood ratios.
By \eqref{eq:asym_edge_mi_expansion} and
\eqref{eq:asym_edge_llr_variance},
\(nps_1s_2=O(1)\), and \(\log(1/p)=O(\log n)\), we have
\[
        \Expect Z=N I_{\rm e}=O(n\log n),
        \qquad
        \Var(Z)=O(n\log^2 n).
\]
Applying \Cref{lem:asym_conditional_chi2_to_kl} with
\(X=\pistar\), \(Y=(A,B)\), and
\(\mathcal E=\mathcal A_{\rm pf}\), we obtain
\[
\begin{aligned}
        D(\P_{A,B}\Vert\Q_{A,B})
        &\le
        o(1)
        +O(n^{-1})\,O(n\log n)
        +\sqrt{O(n^{-1})\,O(n\log^2 n)}
        =O(\log n).
\end{aligned}
\]
This proves the lemma.
\end{proof}

By monotonicity under independent thinning, we may first increase the
retention probabilities to a fixed boundary point.  Reducing \(\epsilon\)
if necessary, assume \(0<\epsilon<1\), and set
\[
        T_n\coloneqq\frac{1-\epsilon/2}{np},
        \qquad
        \bar s_1\coloneqq\max\{s_1,\sqrt{T_n}\},
        \qquad
        \bar s_2\coloneqq\frac{T_n}{\bar s_1}.
\]
Since \(s_1\ge s_2\) and
\(
        s_1s_2\le(1-\epsilon)/(np)<T_n,
\)
we have \(\bar s_i\ge s_i\) for \(i=1,2\),
\(\bar s_1\ge\bar s_2\), and \(\bar s_1\bar s_2=T_n\).  These
inequalities follow directly by considering separately
\(s_1\le\sqrt{T_n}\) and \(s_1>\sqrt{T_n}\).
Moreover, \(T_n\to0\) and \(\max_i s_i\to0\), so
\(\max_i\bar s_i\to0\).  The original experiment is obtained from this
more informative experiment by retaining each observed edge in graph \(i\)
independently with probability \(s_i/\bar s_i\).  It is therefore enough
to prove intractability for the latter experiment.

Relabeling \((\bar s_1,\bar s_2)\) as \((s_1,s_2)\), we henceforth assume
\(
        nps_1s_2=1-{\epsilon}/{2}.
\)
Choose a fixed \(\delta>0\) sufficiently small that
\(
        \frac{1-\epsilon/2}{1-\delta}
        \le 1- {\epsilon}/{4},
\)
and define the inflated endpoint parameters
\[
        p'\coloneqq (1-\delta)p,
        \qquad
        s_i'\coloneqq \frac{s_i}{1-\delta},
        \qquad i=1,2.
\]
Since \(\max_i s_i\to0\), these are valid probabilities for all
sufficiently large \(n\).  The marginals are unchanged, i.e.
\(
        p's_i'=ps_i
\)
for
\(
        i=1,2.
\)
Moreover,
\[
        np's_1's_2'
        =
        \frac{nps_1s_2}{1-\delta}
        \le
        1-\frac{\epsilon}{4}.
\]
Also \(p'=n^{-\Omega(1)}\), \(np'=\omega(\log^2n)\), and
\(\max_i s_i'\to0\).
We apply the interpolation above to this inflated endpoint.  Thus
\[
        q=p's_1'=ps_1,
        \qquad
        r=p's_2'=ps_2,
        \qquad
        s=s_2'=\frac{s_2}{1-\delta}.
\]
The point
\(
        \theta_0=(1-\delta)s=s_2
\)
is exactly the saturated model before inflation, because
\(
        q\theta_0=ps_1s_2,
\)
and
\(
        r-q\theta_0=ps_2(1-s_1).
\)
By \Cref{lem:asym_sparse_recovery_mi_input}, the inflated endpoint satisfies
\(
        I_s(A,B;\pistar)
        =
        N I_{\rm e}-O(\log n).
\)
Since \(s=s_2'\le \sqrt{s_1's_2'}=O((np)^{-1/2})=o(1)\), and
\(
        Nqs
        =Np's_1's_2'
        =\Theta(n),
\)
\eqref{eq:asym_common_mmse_lower} gives
\[
        \operatorname{mmse}_{\theta_0}(A^{\pistar})
        \ge
        Nq(1-q)(1-o(1)).
\]
Here \(q=p's_1'=ps_1=o(1)\).  Thus the preceding display has the form
required by \Cref{lem:asym_recovery_mmse_overlap}, with \(\xi=o(1)\).
Furthermore, since \(s_1'\ge s_2'\),
\[
        nq
        =
        np's_1'
        \ge
        np'\sqrt{s_1's_2'}
        =
        \sqrt{np'\cdot np's_1's_2'}
        =
        \omega(\log n).
\]
Hence
\(
        \frac{n\log n}{Nq}
        =
        O\left(\frac{\log n}{nq}\right)
        =
        o(1).
\)
Applying \Cref{lem:asym_recovery_mmse_overlap} at
\(\theta=\theta_0\), we obtain
\(
        \Expect_{\theta_0}[\ov(\pistar,\widehat\pi)]
        =
        o(1)
\)
for every estimator \(\widehat\pi\).  Markov's inequality then gives, for every
fixed \(a>0\),
\[
        \P_{\theta_0}\big(\ov(\pistar,\widehat\pi)\ge a\big)=o(1).
\]
This proves
\Cref{thm:asym_dd_recovery-i}.

\subsubsection{Dense regime: proof of~\texorpdfstring{\Cref{thm:asym_dd_recovery-iii}}{}}

Let
\(
        f_p\coloneqq \log\frac1p-1+p .
\)
The dense hypothesis may be written, after decreasing \(\epsilon\) if needed, as
\(
        nps_1s_2 f_p
        \le
        (2-\epsilon)\log n .
\)

\medskip 

\begin{lemma}[Dense mutual-information input]
\label{lem:asym_dense_recovery_mi_input}
Assume
\(
        p\le1-\Omega(1),
        p=n^{-o(1)},
\) and
\(
        \max\{s_1,s_2\}\to0.
\)
Further, for \(i=1,2\), assume that
\(
        np^2s_i=\omega(1)
\)
and
\[
        nps_1s_2\left(\log\frac1p-1+p\right)
        \le
        (2-\epsilon)\log n .
\]
Then
\(
        I_s(A,B;\pistar)
        =
        N I_{\rm e}-o(1),
\) 
and
\begin{align} \label{eq:dense-Ie-expansion}
        I_{\rm e}
        =
        ps_1s_2
        \left(\log\frac1p-1+p\right)(1+o(1)).
\end{align}
\end{lemma}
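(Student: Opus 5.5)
Two things need proving: the expansion \eqref{eq:dense-Ie-expansion} of $I_{\rm e}$, and the bound $D(\P_{A,B}\Vert\Q_{A,B})=o(1)$. As in the proof of \Cref{lem:asym_sparse_recovery_mi_input}, conditionally on $\pistar$ the $N$ aligned pairs are independent, so the chain rule gives
\[
I_s(A,B;\pistar)=NI_{\rm e}-D(\P_{A,B}\Vert\Q_{A,B}),
\qquad \Q_{A,B}=\P_A\otimes\P_B .
\]

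The expansion of $I_{\rm e}$ is the first-order computation already recorded in \eqref{eq:asym_edge_mi_expansion}. Here $I_{\rm e}=pt\,f_p+O(pt(s_1+s_2))$. Since $p\le 1-\Omega(1)$ gives $f_p=\Omega(1)$, and $\max\{s_1,s_2\}\to0$, the error term is $o(pt f_p)$, which yields \eqref{eq:dense-Ie-expansion}.

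For the divergence, I would apply \Cref{lem:asym_conditional_chi2_to_kl} with $X=\pistar$, $Y=(A,B)$, and $\calE$ the dense conditioning event \eqref{eq:asym_dense_calE}. Two preliminary points need care.
\begin{itemize}
\item \emph{Equality at the boundary.} Exactly as in Appendix~\ref{app:asym_dense_detection}, I would first reduce to the case where the threshold holds with equality, $ntI_p=(2-\epsilon)\log n$. This uses monotonicity of mutual-information deficits under thinning, or else one reruns \Cref{lem:asym_dense_wxy_tail} directly under the inequality.
\item \emph{Independence of $\calE$ from $\pistar$.} By permutation equivariance, $\P((A,B,\pistar)\in\calE\mid\pistar=\pi)$ does not depend on $\pi$. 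This holds because the defining conditions are invariant under relabeling, as is $e_B(S)$ with the aligned $B^{\pistar}$.
\end{itemize}

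With these in place, the three terms of \Cref{lem:asym_conditional_chi2_to_kl} are handled as follows.
\begin{itemize}
\item \emph{The $\chi^2$ term.} The detection proof shows $\Expect_\Q[(L')^2]\le 1+o(1)$, via \Cref{lem:asym_dense_cond_high_prob,lem:asym_dense_wxy_tail}. Hence $\log(1+\chi^2)=o(1)$.
\item \emph{The $\delta$ terms.} By \Cref{lem:asym_dense_cond_high_prob}, $\delta=e^{-\Omega(I_pn)}=e^{-n^{1-o(1)}}$. Moreover $\Expect[D(P_{Y\mid X}\Vert Q_Y)]=NI_{\rm e}=O(n\log n)$ and $\delta\log(1/\delta)\to 0$. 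So the middle term is $e^{-n^{1-o(1)}}\cdot O(n\log n)=o(1)$.
\item \emph{The variance term.} By \eqref{eq:asym_edge_llr_variance}, $\Var(Z)\le NCpt(1+\log^2(1/p))=O(n\log n\cdot\log^2 n)$, which is polynomial in $n$. The exponentially small $\delta$ therefore kills it: $\sqrt{\delta\Var Z}=o(1)$.
\end{itemize}
Together these give $D(\P_{A,B}\Vert\Q_{A,B})=o(1)$.

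The main obstacle is the $\chi^2$ term. Two things must be checked there: that the conditional second-moment bound from Appendix~\ref{app:asym_dense_detection} is indeed a bound on $\chi^2(\P_{A,B\mid\calE}\Vert\Q)$, and that it is $1+o(1)$ and not merely bounded. The first holds because $L'$ there is exactly the likelihood ratio of $\P$ conditioned on $\calE$. The second uses the sharp conclusions $1+o(1)$ and $o(1)$ of \Cref{lem:asym_dense_wxy_tail}. Everything else is routine.
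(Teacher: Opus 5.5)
Your proposal is correct and follows the paper's proof. Both use the chain rule to reduce to bounding $D(\P_{A,B}\Vert\Q_{A,B})$, reduce to the boundary case by data processing under thinning, and then apply \Cref{lem:asym_conditional_chi2_to_kl} with the dense conditioning event, handling the $\chi^2$, $\delta$, and variance terms as you do. The one detail the paper makes explicit and you leave implicit is the boundary choice $\bar s_1=\max\{s_1,\sqrt{T_n}\}$, $\bar s_2=T_n/\bar s_1$, which keeps $\max_i\bar s_i\to0$ and $np^2\bar s_i=\omega(1)$ so that the detection-appendix bounds still apply to the enlarged model.
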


\begin{proof}
Let \(\Q_{A,B}=\P_A\otimes\P_B\).  As above, the chain rule gives
\(
        I_s(A,B;\pistar)=N I_{\rm e}-D(\P_{A,B}\Vert\Q_{A,B})
\)
and we set
\(
        \zeta_n=D(\P_{A,B}\Vert\Q_{A,B}).
\)

First reduce to the boundary case.  If the displayed threshold in the
lemma is strict, put
\[
        T_n=\frac{(2-\epsilon)\log n}
        {np(\log(1/p)-1+p)},\qquad
        \bar s_1=\max\{s_1,\sqrt{T_n}\},\qquad
        \bar s_2=\frac{T_n}{\bar s_1}.
\]
Then \(\bar s_i\ge s_i\), \(\bar s_1\bar s_2=T_n\), and
\(\max_i\bar s_i\to0\), since \(T_n=n^{-1+o(1)}\).  Also
\(np^2\bar s_i=\omega(1)\).  The data processing inequality gives
\[
        D(\P_{A,B}\Vert\Q_{A,B})
        \le D(\overline\P_{A,B}\Vert\overline\Q_{A,B}).
\]
It therefore suffices to consider the boundary model, which we relabel by
\((s_1,s_2)\).

Let \(\mathcal E(\pistar)\) be the conditioning event from
Appendix~\ref{app:asym_dense_detection}, and write
\(
        I_p=p(\log(1/p)-1+p).
\)
By \Cref{lem:asym_dense_cond_high_prob} and the conditional second-moment
calculation in~Appendix~\ref{app:asym_dense_detection},
\[
        \delta_n
        \coloneqq\P(\mathcal E(\pistar)^c)
        \le e^{-\Omega(I_pn)}
        =e^{-n^{1-o(1)}},
        \qquad
        \chi^2(\P_{A,B\mid\mathcal E(\pistar)}
        \Vert\Q_{A,B})=o(1).
\]
This calculation uses the individual \(s_i\)'s only in the two edge-count
concentration estimates in
\eqref{eq:asym_dense_edge_count_concentration}; the present assumptions
\(np^2s_i=\omega(1)\) are exactly what those estimates require.  All other
steps depend on \(s_1,s_2\) only through \(t=s_1s_2\).  By permutation
equivariance, \(\mathcal E(\pistar)\) is independent of \(\pistar\).

For a fixed alignment, set
\(
        Z
        \coloneqq
        \log\frac{\dd\P_{A,B\mid\pistar}}{\dd\Q_{A,B}}(A,B).
\)
By \eqref{eq:asym_edge_mi_expansion},
\eqref{eq:asym_edge_llr_variance}, and the boundary relation,
\[
        \Expect Z=N I_{\rm e}=O(n\log n),
        \qquad
        \Var(Z)
        =
        O\!\left(
            Nps_1s_2\left(1+\log^2\frac1p\right)
        \right)
        =
        n^{1+o(1)}.
\]
Applying \Cref{lem:asym_conditional_chi2_to_kl} with
\(X=\pistar\), \(Y=(A,B)\), and
\(\mathcal E=\mathcal E(\pistar)\) gives
\(
        \zeta_n=o(1).
\)
Finally, independently of the boundary reduction, applying
\eqref{eq:asym_edge_mi_expansion} to the original parameters,
using \(\max_i s_i=o(1)\) and the fact that
\(\log(1/p)-1+p\) is bounded away from zero, yields~\eqref{eq:dense-Ie-expansion}.
\end{proof}

We now prove the dense converse.  We first reduce to a boundary model.  Set
\[
        T_n\coloneqq
        \frac{(2-\epsilon/2)\log n}{npf_p},
        \qquad
        \bar s_1\coloneqq\max\{s_1,\sqrt{T_n}\},
        \qquad
        \bar s_2\coloneqq\frac{T_n}{\bar s_1}.
\]
As in the sparse case, \(\bar s_i\ge s_i\),
\(\bar s_1\ge\bar s_2\), and \(\bar s_1\bar s_2=T_n\).  Since
\(p=n^{-o(1)}\) and \(p\le1-\Omega(1)\), we have
\(T_n=n^{-1+o(1)}\), and hence \(\max_i\bar s_i\to0\).  Also
\(np^2\bar s_i=\omega(1)\), because \(\bar s_i\ge s_i\).  Thus it is
enough, by independent thinning, to prove the result for this more
informative model.  Relabeling $(\bar s_1, \bar s_2)$ as $(s_1,s_2)$, we have
\(
        nps_1s_2f_p=(2-\epsilon/2)\log n.
\)
Choose a fixed small \(\delta>0\) and define
\[
        p'\coloneqq (1-\delta)p,
        \qquad
        s_i'\coloneqq \frac{s_i}{1-\delta},
        \qquad i=1,2.
\]
Since \(\max\{s_1,s_2\}\to0\), \(s_i'\le1\) for all sufficiently large \(n\).
Also
\(
        p's_i'=ps_i,
\)
and
\(                  
    p's_1's_2'=\frac{ps_1s_2}{1-\delta}.
\)
Fix \(c>0\) such that \(p\le1-c\) for all sufficiently large \(n\).
The choice of \(\delta\) can be made so that
\[
        \frac{f_{(1-\delta)p}}{(1-\delta)f_p}
        \le
        \frac{2-\epsilon/4}{2-\epsilon/2}.
\]
Indeed,
\(
        f_{(1-\delta)p}
        =f_p-\log(1-\delta)-\delta p
\)
and \(f_p\) is bounded away from zero uniformly for \(p\le1-c\), so the
ratio on the left is \(1+O(\delta)\).  Consequently,
\[
        np's_1's_2'f_{p'}
        =
        nps_1s_2f_p\,
        \frac{f_{p'}}{(1-\delta)f_p}
        \le
        (2-\epsilon/4)\log n.
\]
Moreover,
\(
        np'^2s_i'=(1-\delta)np^2s_i=\omega(1).
\)
We apply the interpolation to this inflated endpoint, so
\[
        q=p's_1'=ps_1,
        \qquad
        r=p's_2'=ps_2,
        \qquad
        s=s_2'=\frac{s_2}{1-\delta}.
\]
The point
\(
        \theta_0=(1-\delta)s=s_2
\)
corresponds exactly to the saturated model before inflation.

By \Cref{lem:asym_dense_recovery_mi_input},
\(
        I_s(A,B;\pistar)=N I_{\rm e}-o(1).
\)
Using \eqref{eq:asym_common_mmse_lower},
\[
        \operatorname{mmse}_{\theta_0}(A^{\pistar})
        \ge
        Nq(1-q)
        \left(
            1
            -
            O(s)
            -
            o\left(\frac1{Nqs}\right)
        \right).
\]
Now
\(
        Nqs
        =Np's_1's_2'
        =
        \Theta\left({n\log n}/{f_p}\right)
        \to\infty,
\)
where the saturation relation was used.  Also \(s=s_2'=o(1)\) and
\(q=p's_1'=ps_1=o(1)\).  Hence
\(
        \operatorname{mmse}_{\theta_0}(A^{\pistar})
        \ge
        Nq(1-q)(1-o(1)).
\)
This is the hypothesis of \Cref{lem:asym_recovery_mmse_overlap} with
\(\xi=o(1)\).
Moreover, because \(s_1'\ge s_2'\),
\[
        nq
        =
        np's_1'
        \ge
        np'\sqrt{s_1's_2'}
        =
        \sqrt{np'\cdot np's_1's_2'}
        =
        n^{1/2-o(1)}.
\]
Thus
\(
        \frac{n\log n}{Nq}
        =
        O\left(\frac{\log n}{nq}\right)
        =
        o(1).
\)
Applying \Cref{lem:asym_recovery_mmse_overlap} at \(\theta=\theta_0\), we get
\(
        \Expect_{\theta_0}[\ov(\pistar,\widehat\pi)]
        =
        o(1)
\)
for every estimator \(\widehat\pi\).  Markov's inequality yields, for every
fixed \(a>0\),
\[
        \P_{\theta_0}\big(\ov(\pistar,\widehat\pi)\ge a\big)=o(1).
\]
This proves
\Cref{thm:asym_dd_recovery-iii}.

\section{Proof of~\texorpdfstring{\Cref{thm:ER-GLRT-achievability}}{}} \label{app:ER-GLRT-achievability}

For an alignment $\pi=(\pi_1,\cdots,\pi_m)\in\Sn^{m}$ with $\pi_1=\Id$, define the pairwise-overlap score
\[
        T_\pi
        \coloneqq
        \sum_{e\in\En}
        \sum_{1\le k<\ell\le m}
        X^{k}_{\pi_k(e)}X^{\ell}_{\pi_\ell(e)}.
\]
The maximum-overlap statistic is
\[
        T \coloneqq \max_{\pi\in\Sn^{m-1}} T_\pi,
\]
where the maximum is over $(\pi_2,\cdots,\pi_m)$, with $\pi_1=\Id$. The maximum-overlap test rejects the null hypothesis when \(T\ge \tau\), where the threshold \(\tau\) is specified below.

Let
\(
    N \coloneqq \binom n2
\)
and 
\(
M \coloneqq \binom m2 .
\)
Define
\[
    \psi(p)\coloneqq \log\frac1p-1+p .
\]
Since \(p=n^{-o(1)}\), we have that $\log(1/p) = o(\log n)$, and so it follows from the assumption of the theorem that $N ps^2\to\infty$. Choose a sequence
$\delta_n\to 0$ such that
\(
    \delta_n^2 \cdot  N  ps^2\to\infty \, , 
\)
and set the threshold
\(
    \tau \coloneqq (1-\delta_n) M  Nps^2 .
\)

\paragraph{Controlling the type-II error} Under $ \P $, the score at the true alignment \(\pistar\) can be computed as follows.  For each vertex pair $e\in\En$,
\[
    \sum_{1 \leq k<\ell \leq m}
    X^k_{\pistar_k(e)} \cdot X^\ell_{\pistar_\ell(e)}
    =
    \sum_{1\leq k < \ell \leq m}
    \big( X^0_e  \xi^k_e \big) \cdot \big( X^0_e  \xi^\ell_e \big)
    \, =  \,
    X^0_e \binom{R_e}{2},
\]
where
\(
    R_e\coloneqq \sum_{k=1}^m \xi^k_e \sim \Bin(m,s).
\)
Thus
\[
    T_{\pistar}
    =
    \sum_{e\in\En} X^0_e \binom{R_e}{2}, 
\]
where the summands are independent, bounded by \(M \), and have mean
\(
    \Expect_\P\big[X^0_e\binom{R_e}{2}\big]
    =
    p M  s^2.
\)
Therefore, the mean score after summing over all the edges is
\(
        \Expect_\P[T_{\pistar}]=M Nps^2.
\)
Since \(T\ge T_{\pistar}\), we apply Bernstein's inequality. Note that
\begin{align*}
    \Var\left[ X_e^0 \binom {R_e} 2 \right]  
    &\leq  \Expect\left[ X_e^0  \binom {R_e} 2^2 \right] 
    = p \sum_{r=2}^m \binom m r s^r (1-s)^{m-r} \binom r 2 ^2 \\
    & \leq ps^2 \sum_{r=2}^m \binom m r s^{r-2} (1-s)^{m-r} \binom r 2 ^2 \\
    & \leq  ps^2 \sum_{r=2}^m \binom m r  \binom r 2 ^2 \leq ps^2 M^2 2^m.
\end{align*}
Therefore by Bernstein's inequality, for a constant $c_m$ depending only on $m$,
\begin{align*}
  \P(T<\tau)  & \leq  \exp\left(- \frac {\frac 1 2 (NMps^2\delta_n)^2} 
  {Nps^2 M^22^m + \frac 1 3 \binom m 2 MNps^2\delta_n }\right)  \\
  &=  \exp\left(- \frac {\frac 1 2 Nps^2\delta_n^2 M^2 } 
  { M^22^m + \frac 1 3 \binom m 2 M\delta_n }\right) \\
  &\leq  \exp\left(- Nps^2\delta_n^2 c_m\right) = o(1).
\end{align*} 

\paragraph{Controlling the type-I error}  Fix an arbitrary alignment
\(\pi\).  Under \(\Q\), the vectors
\(
    \big(X^1_{\pi_1(e)}, \cdots, X^m_{\pi_m(e)}\big)
\)
for
\(
    e\in\En,
\)
are i.i.d., with independent \(\Bern(ps)\) coordinates.  Hence
\[
    T_\pi
    \stackrel{ \mathrm{d}. }{=}
    \sum_{e\in\En} \binom{Z_e}{2},
    \qquad
    Z_e\stackrel{\mathrm{iid}}{\sim}\Bin(m,ps).
\]
Let \(Z\sim\Bin(m,ps)\), and set
\(
    \theta=\log(1/p).
\)
Chernoff's bound gives
\begin{align} \label{eq:chernoff-Q}
    \Q(T_\pi\ge \tau)
    \le
    \exp\left\{
        -\theta\tau
        +
        N\log \Expect \exp\left(\theta\binom Z2\right)
    \right\}.
\end{align}
Now
\begin{align} \label{eq:mgf-form}
    \Expect \exp\left(\theta\binom Z2\right)
    &=
    1+
    \sum_{r=2}^m
    \binom mr (ps)^r(1-ps)^{m-r}
    \left(p^{-\binom r2}-1\right).
\end{align}
Using \(1-ps \leq 1\), the \(r=2\) term contributes at most
\(
    M  (ps)^2(p^{-1}-1)
    =
    M ps^2(1-p)
\)
to the sum in~\eqref{eq:mgf-form}, while the contribution of all \(r\ge3\) terms is bounded by
\[
    O\left(
        \sum_{r=3}^m
        (ps)^r p^{-\binom r2}
    \right)
    =
    O\left(
        ps^2
        \sum_{r=3}^m
        s^{r-2} \, p^{-\binom{r-1}{2}}
    \right)
    =
    o(ps^2) \, .
\]
Therefore, using that $\log(1+x) \leq x$, we have
\begin{align} \label{eq:bound-on-mgf}
    \log \Expect \exp\left(\theta\binom Z2\right)
    \leq
    M ps^2(1-p+o(1)) \, .
\end{align}
It follows by plugging~\eqref{eq:bound-on-mgf} in~\eqref{eq:chernoff-Q}, along with the definition of $\theta$ and $\tau$, that
\begin{align*}
    -\log \Q(T_\pi\ge \tau)
    &\ge
    (1-\delta_n)M Nps^2\log\frac1p
    -
    M Nps^2(1-p+o(1))    \\
    &=
    M Nps^2
    \Big[
        \psi(p)-\delta_n\log\frac1p-o(1)
    \Big].
\end{align*}
Since $p \leq 1-c$, there is a constant $C'$ depending only on $c$ such that $ \log(1/p) \leq C' \psi(p)$. Since $\delta_n\to 0$, this gives
\[
    -\log \Q(T_\pi\ge \tau)
    \ge
    (1-o(1))M Nps^2\psi(p) \, .
\]
By the assumed lower bound on \((n-1)ps^2\) from the theorem statement, we have
\[
\begin{aligned}
        M Nps^2\psi(p)
        \, = \,
        \binom m2 \binom n2 ps^2\psi(p)    
        \, = \,
        \frac{m(m-1)}{4}\, n(n-1)ps^2\psi(p)  
        \, \ge \,
        (1+\epsilon)(m-1)n\log n .
\end{aligned}
\]
Thus, for all sufficiently large \(n\), we have
\(
    \Q(T_\pi\ge \tau)
    \le
    \exp\left\{
        -\left(1+\frac{\epsilon}{2}\right)(m-1)n\log n
    \right\},
\)
uniformly over \(\pi\).
Finally, taking a union bound over at most \((n!)^{m-1}\) alignments,
\[
    \begin{aligned}
        \Q(T\ge\tau)
        &\le
        (n!)^{m-1}
        \exp\left\{
            -\left(1+\frac{\epsilon}{2}\right)(m-1)n\log n
        \right\} \\[0.4em]
        &\leq
        \exp\left\{
            (m-1)n\log n
            -
            \left(1+\frac{\epsilon}{2}\right)(m-1)n\log n
        \right\}
        \, = \, 
        o(1) \, .
    \end{aligned}
\]
Combining the two bounds yields
\(
    \P(T<\tau)+\Q(T\ge\tau)=o(1) \, ,
\)
which completes the proof.
\end{document}